\newtheorem{Th}{Theorem}[section] 
\newtheorem{Prop}{Proposition}[section]   
\newtheorem{Lem}{Lemma}[section]   
\newtheorem{Coro}{Corollary}[section]   
\newtheorem{Def}{Definition}  
\newtheorem{Rem}{Remark}[section]
\newcommand{\R}{\mathbb{R}}
\newcommand{\Z}{\mathbb{Z}}
\newcommand{\C}{\mathbb{C}}
\newcommand{\N}{\mathbb{N}}
\newcommand{\bbS}{\mathbb{S}}
\newcommand{\s}{{\rm S}}
\newcommand{\Sz}{\mathcal{S}}
\newcommand{\x}{\langle x\rangle}
\newcommand{\y}{\langle y\rangle}
\newcommand{\z}{\langle z\rangle}
\newcommand{\Lap}{\Delta}
\newcommand{\A}{{\mathcal A}}
\newcommand{\AH}{{\mathbb A}}
\newcommand{\F}{{\mathcal F}}
\newcommand{\re}{\text{\rm Re}}
\newcommand{\dt}[1]{\accentset{\mbox{\bfseries .}}{#1}}
\newcommand{\intr}[1]{\accentset{\circ}{#1}}
\newcommand{\e}{{\varepsilon}}
\begin{document}

\title{Spatial asymptotics and equilibria of heat flow on $\R^d$}   
 
\author{Robert McOwen and Peter Topalov\footnote{P.T. partially supported by the Simons Foudation, Award \#526907.}} 

\maketitle

\begin{abstract} 
We prove that the heat equation on $\R^d$ is well-posed in certain spaces of functions allowing spatial 
asymptotic expansions as $|x|\to\infty$ of any a priori given order. In fact, we show that the Laplacian on 
such function spaces generates an analytic semigroup of angle $\pi/2$ with polynomial growth as $t\to\infty$. 
Generically, a large class of nonlinear heat flows have equilibrium solutions with spatial asymptotics of the considered type. 
We provide a simple nonlinear model that features global in time existence with such asymptotics at spatial infinity.
\end{abstract}


\section{Introduction}\label{sec:introduction}
Consider the initial-value problem for the heat equation on $[0,\infty)\times\R^d$:
\begin{equation}\label{eq:heat}
\left\{
\begin{array}{l}
u_t=\Lap u, \quad\hbox{for $t>0$, $x\in\R^d$,} \\
u|_{t=0}=v, \quad\hbox{for $x\in\R^d$,}
\end{array}
\right.
\end{equation}
where $v\!\in\! CB(\R^d)$, i.e.\ the continuous, bounded functions on $\R^d$. 
It is well-known (see e.g.\ \cite{McOwen,Shubin}) that the unique solution $u\in CB([0,\infty)\times\R^d)$ of 
\eqref{eq:heat} is given by
\begin{equation}\label{eq:Gaussian}
u(x,t)=\big(S(t)v\big)(x):=\frac{1}{(4\pi t)^{d/2}} \int_{\R^d} e^{-\frac{|x-y|^2}{4t}} v(y)\,dy,
\end{equation}
and that $u\in C^\infty((0,\infty)\times\R^d)$. However, if $v$ has certain asymptotic properties as $|x|\!\to\!\infty$,
we are interested to know whether these properies are transmitted to $u(x,t)$.
In particular, suppose that $v$ has a partial asymptotic expansion
\begin{equation}\label{eq:v-asymptotic}
v(x)=b_0(\theta)+\frac{b_1(\theta)}{r}+\cdots+\frac{b_N(\theta)}{r^N}+o\left(\frac{1}{r^N}\right)
\quad\hbox{as $|x|\to\infty$},
\end{equation}
where $r:=|x|$, $\theta:=x/|x|$, and the coefficients $b_k$ are continuous functions on the unit sphere $\s^{d-1}$.
We want to conclude that $u(x,t)$ has a similar expansion with coefficients depending on $t\ge 0$:
\begin{equation}\label{eq:u-asymptotic}
u(x,t)=a_0(\theta,t)+\frac{a_1(\theta,t)}{r}+\cdots+\frac{a_N(\theta,t)}{r^N}+o\left(\frac{1}{r^N}\right)
\quad\hbox{as $|x|\to\infty$},
\end{equation}
such that for any $\theta\in S^{d-1}$ we have that $a_k(\theta,t)\to b_k(\theta)$ as $t\to 0$.
Of course, part of the challenge is to determine the conditions to impose on the coefficients $b_k(\theta)$, $a_k(\theta,t)$, 
and also how to handle the remainder term $o(1/r^{N})$. In fact, we want to do more. We want to consider initial 
conditions $v$ belonging to a Banach space $X$ of functions with  asymptotic conditions as $|x|\to\infty$, and show that 
$\{S(t)\}_{t\ge 0}$ is an analytic semigroup on $X$. The semigroup $S(t)$ also plays a role in the study of 
nonlinear equations of the form
\begin{equation}\label{eq:nonlinearheat}
u_t=\Lap u+F(u).
\end{equation}
In fact, such equations provide a motivation for considering asymptotics of the form \eqref{eq:v-asymptotic} since as we 
shall see below (cf.\ Theorem \ref{th:semilinear-equilibrium}), 
a large class of equations of the form \eqref{eq:nonlinearheat}, {\em generically}, have {\em equilibrium solutions} with 
asymptotics of the form \eqref{eq:v-asymptotic}. 


Let us describe the function spaces that we will consider here; they were studied in more detail in \cite{McOwenTopalov2}, 
but their relevant properties are summarized in Appendix \ref{sec:appendix_properties}.
In the following, let $\N$ denote the natural numbers, $\Z_{\ge 0}$ denote the nonnegative integers, and 
$C^\infty_c(\R^d)$ denote the smooth functions with compact support.  Moreover, we let  $\x:=\sqrt{1+|x|^2}$ and 
$\chi(r)$ be a $C^\infty$-function for $r\ge 0$ such that $\chi(r)=0$ for $0\le r\le 1$ and $\chi(r)=1$ for $r\ge 2$.

\medskip

\noindent{\em Definition of $H_\delta^{m,p}(\R^d)$.}
For $1\le p<\infty$, $m\in\Z_{\ge 0}$, and $\delta\in\R$, define the weighted Sobolev space $H_\delta^{m,p}(\R^d)$ to be 
the closure of $C^\infty_c(\R^d)$ in the norm
\begin{equation}\label{eq:H-norm}
\|f\|_{H^{m,p}_\delta}=\sum_{|\alpha|\le m} \|\x^\delta\partial^\alpha f\|_{L^p}.
\end{equation}
(Here we use multi-index notation, i.e.\ $\partial^\alpha=\partial_1^{\alpha_1}\cdots\partial_d^{\alpha_d}$ where 
$\partial_j:=\partial/\partial x_j$ 
and  $|\alpha|=\alpha_1+\cdots\alpha_d$.) For $m=0$, we write $L^p_\delta(\R^d)$ instead of $H^{0,p}_\delta(\R^d)$, 
and for $\delta=0$ we write $H^{m,p}(\R^d)$ instead of $H^{m,p}_0(\R^d)$.
We sometimes write $H^{m,p}_\delta$ instead of $H^{m,p}_\delta(\R^d)$.
 
\medskip

\noindent{\em Definition of $\AH^{m,p}_N$ and $\AH^{m,p}_{n,N}$.} 
For $m,N\in\Z_{\ge 0}$ with $m>d/p$ and $1\le p<\infty$, let us denote by $N^*\in\Z_{\ge 0}$ the 
integer\footnote{We always have $N^*\ge N$, but  $N^*=N$ if and only if $d<p$; this was the case in \cite{McOwenTopalov1}, 
where $d=1$ and $p>1$.} satisfying 
\begin{equation}\label{def:N*}
N-1<N^*-d/p\le N,
\end{equation}
and define $\AH^{m,p}_{N}$ to be the space of functions of the form
\begin{equation}\label{AH-expansion1} 
v(x)=\chi(r)\left(b_0(\theta)+\cdots+\frac{b_{N^*}(\theta)}{r^{N^*}}\right)+f(x)
\end{equation}
where\footnote{$H^{l,p}(\s^{d-1})$ denotes the $L^p$-based Sobolev space on $\s^{d-1}$ of regularity $l\ge 0$.}
\begin{equation}\label{eq:b-term}
b(x)=\chi(r)\left(b_0(\theta)+\cdots+\frac{b_{N^*}(\theta)}{r^{N^*}}\right)\,\,
\text{with $b_k\in H^{m+1+N^*-k,p}(\s^{d-1})$}
\end{equation}
and
\[
f\in H^{m,p}_N(\R^d). 
\]
We shall call $b$ the {\em asymptotic function}, $b_k$ the  {\em asymptotic coefficients}, and $f$ the {\it remainder function}.
The restriction \eqref{def:N*} guarantees that 
$\chi(r)/r^{N^*+1}\in H_N^{m,p}(\R^d)$ but $\chi(r)/r^{N^*}\not\in H_N^{m,p}(\R^d)$, 
so the representation \eqref{AH-expansion1} is unique.
Finally, note that the decreasing regularity of the asymptotic coefficients in \eqref{eq:b-term} plays an
essential role for proving the results in this paper.

Let us explain why these assumptions guarantee that $v\in\AH^{m,p}_{N}$ is of the desired form
\begin{equation}\label{v-asymptotics}
v(x)=\left(b_0(\theta)+\cdots + \frac{b_{N}(\theta)}{r^{N}}\right)+o\left(\frac{1}{r^N}\right)
\quad\hbox{as $|x|\to\infty$}.
\end{equation}
We have assumed $m>d/p$ because then $f\in H^{m,p}_N(\R^d)$ implies $f$ is continuous and satisfies $f(x)=o(|x|^{-N})$ as 
$|x|\to\infty$ (cf.\ Proposition \ref{pr:H-properties}(a) in Appendix A).
Moreover, $N^*\ge N$, so the extra asymptotic terms in  \eqref{eq:b-term} can be put into the $o(1/r^N)$ term in 
\eqref{v-asymptotics}. 
The function space $\AH^{m,p}_{N}$ becomes a Banach space under the norm
\begin{equation}\label{def:AH-norm}
\|v\|_{{\AH}_{N}^{m,p}}=
\sum_{k=0}^{ N^*} \|b_k\|_{H^{m+1+N^*-k,p}}+\| f\|_{H_N^{m,p}}.
\end{equation}
For an integer $n$ satisfying $0\le n\le N$, we denote by $\AH^{m,p}_{n,N}$ the closed subspace of $\AH^{m,p}_N$ for 
which $b_0=\cdots=b_{n-1}=0$. Note that for $m>d/p$ the space $\AH^{m,p}_{n,N}$ is a Banach algebra 
(cf.\ Proposition \ref{pr:A-properties}(e)).

\medskip

We prove the following theorem.

\begin{Th}\label{th:main}
Assume $1<p<\infty$, $N\in \Z_{\ge 0}$, and $m\in\Z_{\ge 0}$ satisfies $m>d/p$.
Then  \eqref{eq:Gaussian} defines a strongly continuous semigroup on 
$\AH^{m,p}_N$ that satisfies
\begin{equation}\label{est:S(t)onAH}
\|S(t)v\|_{\AH^{m,p}_N}\le C\,(1+t)^{\mu}\|v\|_{\AH^{m,p}_N}\quad\hbox{for }\,\,0<t<\infty,\ v\in \AH^{m,p}_N, 
\end{equation}
where $\mu:=\big(N+N^*+2\big)/2>0$, and  is analytic  of angle $\pi/2$. The generator is $\Lap$ considered as 
an unbounded operator on $\AH^{m,p}_N$ with a domain $D$ that contains $\AH^{m+2,p}_N$. 
Moreover, if $N\ge 2$ then the two leading asymptotic coefficients $a_0(t)$ and $a_1(t)$ in \eqref{eq:u-asymptotic} are 
independent of $t\ge 0$.
\end{Th}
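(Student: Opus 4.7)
The plan is to decompose $v\in\AH^{m,p}_N$ as $v=b+f$, with asymptotic part $b(x)=\chi(r)\sum_{k=0}^{N^*} b_k(\theta)/r^k$ and remainder $f\in H^{m,p}_N$, and to build $S(t)v$ as a formal asymptotic solution plus a weighted Sobolev correction. Using the spherical decomposition
\[
\Lap\!\left(\frac{a_k(\theta)}{r^k}\right)=\frac{1}{r^{k+2}}\bigl[k(k+2-d)\,a_k(\theta)+\Lap_{\s^{d-1}} a_k(\theta)\bigr],
\]
I would introduce the ansatz $B(x,t):=\chi(r)\sum_{k=0}^{N^*} a_k(\theta,t)/r^k$ whose spherical coefficients satisfy
\[
\partial_t a_j=(j-2)(j-d)\,a_{j-2}+\Lap_{\s^{d-1}} a_{j-2},\qquad a_j(0)=b_j,
\]
with $a_{-2}=a_{-1}:=0$, obtained by matching powers of $r^{-j}$ in $u_t=\Lap u$. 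Each $a_j(\cdot,t)$ is then explicitly a polynomial in $t$ of degree $\lfloor j/2\rfloor$ whose coefficients are iterated spherical Laplacians of the $b_{j-2n}$. The decreasing regularity in \eqref{eq:b-term} is exactly what is required: every application of $\Lap_{\s^{d-1}}$ costs two derivatives but is compensated by a shift of $+2$ in the index $j$, so $a_k(\cdot,t)\in H^{m+1+N^*-k,p}(\s^{d-1})$ with polynomial-in-$t$ norm, and hence $B(\cdot,t)\in\AH^{m,p}_N$.

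Set $R(t):=B_t-\Lap B$. By construction the coefficients of $r^{-j}$ for $0\le j\le N^*$ cancel in $R$, leaving only (i) compactly supported terms produced when derivatives fall on $\chi$, and (ii) tail contributions of orders $r^{-N^*-1}$ and $r^{-N^*-2}$ produced by $\Lap$ acting on the last two asymptotic terms; condition \eqref{def:N*} guarantees $\chi(r)/r^{N^*+1}\in H^{m,p}_N$, so $R(\cdot,t)\in H^{m,p}_N$ with norm $\le C(1+t)^{N^*/2}\|b\|_{\AH^{m,p}_N}$. Next, $S(t)$ maps $H^{m,p}_N$ to itself with $\|S(t)f\|_{H^{m,p}_N}\le C(1+t^{N/2})\|f\|_{H^{m,p}_N}$: since $S(t)$ commutes with $\partial^\alpha$ this reduces to the weighted $L^p$ case, which follows from the pointwise inequality $\x^N\le C\,\y^N\langle x-y\rangle^N$, Minkowski's integral inequality, and the elementary bound $\|\langle\cdot\rangle^N G_t\|_{L^1}\le C(1+t^{N/2})$. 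Uniqueness of bounded solutions of \eqref{eq:heat} now identifies $S(t)v$ with the Duhamel representation
\[
S(t)v=B(\cdot,t)+S(t)f-\int_0^t S(t-s)\,R(\cdot,s)\,ds,
\]
whose last two terms belong to $H^{m,p}_N$. Combining the three bounds yields \eqref{est:S(t)onAH} with $\mu=(N+N^*+2)/2$, the extra $+1$ arising from the time integration in Duhamel.

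Strong continuity at $t=0$ is immediate: $B(\cdot,t)\to b$ in $\AH^{m,p}_N$ because each $a_k$ is a polynomial with $a_k(0)=b_k$, $S(t)f\to f$ in $H^{m,p}_N$ by density of $C^\infty_c$, and the Duhamel integral is $O(t)$. For analyticity of angle $\pi/2$ I would replace $t$ by complex $\tau$ with $\re\tau>0$ in \eqref{eq:Gaussian}: the Gaussian kernel is holomorphic in $\tau$, the recursion for the $a_k$ is algebraic and yields entire polynomial coefficients $a_k(\theta,\tau)$, and the sectorial bound $\|G_\tau\|_{L^1}\le(|\tau|/\re\tau)^{d/2}$ allows the previous estimates to be repeated uniformly in any subsector $|\arg\tau|\le\pi/2-\varepsilon$. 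The generator is identified as $\Lap$ by differentiating the Duhamel representation at $t=0$ on the dense subspace $\AH^{m+2,p}_N$: termwise differentiation gives $B_t(0)$, the $H^{m,p}_N$-part contributes $\Lap f$, and $-R(0)$ supplies exactly the cutoff and tail pieces of $\Lap b$ missing from $B_t(0)$, so the sum is $\Lap v\in\AH^{m,p}_N$. Finally, for $N\ge 2$ (hence $N^*\ge 2$) the recursion with $j=0,1$ has $a_{j-2}\equiv 0$, so $\partial_t a_0=\partial_t a_1=0$ and the two leading coefficients stay equal to $b_0,b_1$ for all $t$. The main technical obstacle is the bookkeeping in the first two steps — aligning the loss of two spherical derivatives per application of $\Lap_{\s^{d-1}}$ with the decreasing regularity of the $b_k$, and confirming that the tail of $R$ sits in $H^{m,p}_N$ with the correct polynomial growth in $t$ — after which the analytic-semigroup properties follow fairly automatically from the complex-time extension of the Gaussian.
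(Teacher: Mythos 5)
Your overall strategy — split $v=b+f$, propagate the asymptotic coefficients by the ODE system obtained from matching powers of $r^{-j}$ in $u_t=\Lap u$, apply Duhamel to the remainder, and use distributional uniqueness to identify the result with $S(t)v$ — is exactly the route the paper takes (cf.\ \eqref{eq:ODEs-1}--\eqref{eq:ODEs-7} and \eqref{eq:f=}). Your recursion, the polynomial-in-$t$ growth of the $a_k$, the argument that $a_0,a_1$ are constant, the complex-time extension, and the power count $\mu=(N+N^*+2)/2$ all agree with the paper.

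There is, however, one genuine gap: the claim that $R(\cdot,t)=B_t-\Lap B\in H^{m,p}_N$. The $r^{-(N^*+2)}$ tail of $R$ has angular factor $\Lap_\theta a_{N^*}+N^*(N^*+2-d)a_{N^*}$, and since $a_{N^*}(\cdot,t)\in H^{m+1,p}(\s^{d-1})$ (from $b_{N^*}\in H^{m+1,p}(\s^{d-1})$ and the polynomial recursion), the spherical Laplacian lands only in $H^{m-1,p}(\s^{d-1})$. So this piece of $R$ lies in $H^{m-1,p}_N$, not $H^{m,p}_N$; the condition \eqref{def:N*} fixes only the radial weight, not the lost angular derivative. (The $r^{-(N^*+1)}$ term is fine, since $a_{N^*-1}\in H^{m+2,p}(\s^{d-1})$.) Consequently, treating $S(t-s)$ as merely a bounded operator on $H^{m,p}_N$ in the Duhamel integral is not enough. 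The repair is the analytic-semigroup smoothing estimate $\|S(\tau)\|_{\mathcal{L}(H^{m-1,p}_N,\,H^{m,p}_N)}\le C\,\max(|\tau|^{-1/2},1)(1+|\tau|)^{N/2}$: the $|\tau|^{-1/2}$ singularity is integrable, so the Duhamel integral still lands in $C([0,\infty),H^{m,p}_N)$ with the same polynomial growth. This is precisely the role of the paper's Corollary \ref{coro:DS-estimate} and the splitting $h=h_1+h_2$ in the proof of Proposition \ref{pr:SonAH}, where the losing term $h_1$ is handled by the $\mathcal{L}(H^{m-1,p}_N,H^{m,p}_N)$ bound. Once you insert that smoothing estimate, your argument closes.
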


\begin{Rem}
In fact, we prove that the analog of \eqref{est:S(t)onAH} holds in 
$\big\{z\in\C\setminus\{0\} : |\arg z|<\pi/2-\epsilon\big\}$ for any $\epsilon\in(0,\pi/2)$ 
with a constant depending on $\epsilon$ but independent on $m\ge 0$ 
(see Theorem \ref{th:SonH} and Theorem \ref{th:SonAH}).
\end{Rem}

Note that, by Theorem \ref{th:main}, the heat flow $\{S(t)\}_{t\ge 0}$ on $\AH^{m,p}_N$ preserves the regularity 
of the asymptotic coefficients of the initial data $v\in\AH^{m,p}_N$ in \eqref{eq:heat}, i.e., 
if we have in \eqref{AH-expansion1} that $b_k\in H^{m+1+N^*-k,p}(\s^{d-1})$, $0\le k\le N^*$, then for any $t\ge 0$ 
we have that $a_k(t)\in H^{m+1+N^*-k,p}(\s^{d-1})$,  $0\le k\le N^*$, for the asymptotic coefficients of
the solution $u(t)\equiv S(t)v\in\AH^{m,p}_N$. 
Moreover, since $a_0(t)=b_0$, we see that there is no smoothing in the leading asymptotic term of the solution $u(t)$ 
for $t>0$. (In fact, more is true: One can see from the proof of Theorem \ref{th:main} that generically
a similar non-smoothing property holds also for the rest of the the asymptotic coefficients $a_k(t)$, 
$1\le k\le N^*$, of the solution.)
That is in striking contrast with the fact that the solution $u(t)\equiv S(t)v$ of \eqref{eq:heat} is itself smooth for $t>0$, 
i.e., $u(t)\in C^\infty(\R^d)$ for $t>0$. We will call this property of the linear heat flow  {\em asymptotic non-smoothing}.

\medskip

In Section \ref{sec:weighted_spaces} we prove Theorem \ref{th:main} with $H^{m,p}_\delta(\R^d)$ for 
$\delta\in\R$ in place of $\AH^{m,p}_N$. This result is extended to $\AH^{m,p}_N$ in Section \ref{sec:asymptotic_spaces}. 
 
\medskip 
 
As mentioned above, in Section \ref{sec:application-nonlinear} we apply Theorem \ref{th:main} to study 
semilinear heat equations.
Rather than considering general conditions under which we have local and global existence in time, we will
focus on a case where we have global existence in time and 
 equilibrium solutions with nontrivial asymptotics of the type considered in this paper. In particular, we consider 
the specific semilinear equation
\begin{equation}\label{intro-semilinear1}
\left\{
\begin{array}{l}
u_t=\Lap u+\varphi-\psi u^3\quad\text{for}\,\,t>0,\,x\in\R^3,\\
u|_{t=0}=v,
\end{array}
\right.
\end{equation}
where $\varphi$ and $\psi\ge 0$ are functions in the Schwartz space $\Sz(\R^3)$.
For $v\in \AH^{m+2,p}_{1,N}$ we want a solution of \eqref{intro-semilinear1} satisfying $u(t)\in  \AH^{m+2,p}_{1,N}$ for $t>0$.
We will prove the following:
\begin{Th}\label{th:semilinear-global} 
For  $p>3$, $N\ge 1$, $m>3/p$, and $v\in \AH_{1,N}^{m+2,p}$, equation \eqref{intro-semilinear1} has a unique global solution 
$u\in C([0,\infty),\AH_{1,N}^{m+2,p})\cap C^1([0,\infty),\AH_{1,N}^{m,p})$.
\end{Th}

\noindent {\em Equilibrium} (or, equivalently, {\em stationary}) solutions of \eqref{intro-semilinear1} are solutions
of the semilinear elliptic equation
\begin{equation}\label{eq:semilinear-equilibrium}
\Delta u=\psi\, u^3-\varphi\quad\hbox{for}\quad x\in\R^3.
\end{equation}
We show in Section \ref{sec:application-nonlinear} that the semilinear heat equation \eqref{intro-semilinear1} has a unique 
equilibrium solution with an asymptotic expansion at infinity of the type considered in this paper, and this expansion is 
generically non-vanishing. More specifically, we will prove the following:

\begin{Th}\label{th:semilinear-equilibrium} 
Assume that $p>3$ and let $m,N\ge 1$ be integer. Then, for any given $\psi\in\Sz(\R^3)$, $\psi\ge 0$, 
and for any $\varphi\in\Sz(\R^3)$  there exists a unique equilibrium solution $u_*\in\AH^{m+2,p}_{1,N}$ of \eqref{intro-semilinear1},
\begin{equation}\label{eq:u_*-asymptotics}
u_*(x)=\frac{a_1(\theta)}{r}+\cdots+\frac{a_N(\theta)}{r^N}+o\Big(\frac{1}{r^N}\Big)\quad\text{\rm as}\quad|x|\to\infty,
\end{equation}
where the $a_k\in C^\infty(S^2)$ are eigenfunctions for the Laplace operator on $S^2$.
Moreover, there exists an open and dense set $\mathcal{N}$ in $\Sz$ such that for any $\varphi\in\mathcal{N}$ and for any
$1\le k\le N$ the coefficient $a_k$ in \eqref{eq:u_*-asymptotics} does not vanish.
\end{Th}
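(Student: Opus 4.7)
\emph{Strategy.} The equilibrium equation is $\Lap u=\psi u^{3}-\varphi$ on $\R^{3}$. The plan is to first produce a unique solution by the direct method of the calculus of variations, then confirm its asymptotic form via the Newtonian-potential representation, and finally establish generic non-vanishing of the coefficients by a real-analytic argument.

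\emph{Existence, uniqueness, and asymptotic form.} The functional
\[
I(u)=\int_{\R^{3}}\Bigl(\tfrac12|\nabla u|^{2}+\tfrac14\psi u^{4}-\varphi u\Bigr)\,dx
\]
is strictly convex (since $\psi\ge 0$), weakly lower semicontinuous, and coercive on $\dot H^{1}(\R^{3})$ using the embedding $\dot H^{1}(\R^{3})\hookrightarrow L^{6}(\R^{3})$ and the Schwartz decay of $\varphi$; hence $I$ has a unique minimizer $u_{*}\in\dot H^{1}(\R^{3})$ satisfying $\Lap u_{*}=\psi u_{*}^{3}-\varphi$. Uniqueness in $\AH^{m+2,p}_{1,N}$ also follows directly from monotonicity: if $u_{1},u_{2}$ are two solutions then $\Lap(u_{1}-u_{2})=\psi(u_{1}-u_{2})(u_{1}^{2}+u_{1}u_{2}+u_{2}^{2})$, and pairing with $u_{1}-u_{2}$ (boundary terms vanish by decay in $\AH^{m+2,p}_{1,N}$) forces $u_{1}\equiv u_{2}$. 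Set $\rho:=\psi u_{*}^{3}-\varphi$, which is Schwartz-type because $\psi,\varphi\in\Sz$ and $u_{*}$ is bounded. Since $\ker\Lap$ on $\dot H^{1}(\R^{3})$ is trivial,
\[
u_{*}(x)=-\frac{1}{4\pi}\int_{\R^{3}}\frac{\rho(y)}{|x-y|}\,dy.
\]
Inserting the multipole expansion $|x-y|^{-1}=\sum_{\ell\ge 0}|y|^{\ell}|x|^{-\ell-1}P_{\ell}(\theta\cdot\hat y)$ (valid for $|x|>|y|$) and using the rapid decay of $\rho$ yields
\[
u_{*}(x)=\sum_{k=1}^{N}\frac{a_{k}(\theta)}{r^{k}}+o(r^{-N}),\qquad a_{k}(\theta)=-\frac{1}{4\pi}\int_{\R^{3}}|y|^{k-1}P_{k-1}(\theta\cdot\hat y)\,\rho(y)\,dy;
\]
the addition theorem then shows each $a_{k}\in C^{\infty}(S^{2})$ is a spherical harmonic of degree $k-1$, hence an eigenfunction of $\Lap_{S^{2}}$. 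That $u_{*}\in\AH^{m+2,p}_{1,N}$ at the required regularity \eqref{eq:b-term} follows from the Laplacian mapping/elliptic regularity theory on asymptotic spaces from \cite{McOwenTopalov2}.

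\emph{Generic non-vanishing.} For each $k\in\{1,\dots,N\}$ let $V_{k-1}\subset C^{\infty}(S^{2})$ be the $(2k-1)$-dimensional eigenspace of degree-$(k-1)$ spherical harmonics, and define $\Phi_{k}:\Sz\to V_{k-1}$, $\varphi\mapsto a_{k}(\varphi)$. The Fr\'echet $u$-derivative of $(\varphi,u)\mapsto \Lap u-\psi u^{3}+\varphi$ at $(\varphi,u_{*})$ is $\Lap-3\psi u_{*}^{2}$, invertible on $\AH^{m+2,p}_{1,N}$ because $3\psi u_{*}^{2}\ge 0$ is a non-negative, compact perturbation of $\Lap$ (coercivity plus Fredholm index zero). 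The analytic implicit function theorem therefore gives $\varphi\mapsto u_{*}(\varphi)$, and hence $\Phi_{k}$, real-analytic. Testing at $\varphi=0$ with $\varphi=\epsilon\eta$ and $\eta(y)=Y(\hat y)\,e^{-|y|^{2}}$ for a chosen nonzero $Y\in V_{k-1}$, one finds $u_{*}=\epsilon(4\pi)^{-1}|x|^{-1}*\eta+O(\epsilon^{3})$; by the addition theorem the $r^{-k}$-coefficient of $(4\pi)^{-1}|x|^{-1}*\eta$ is a nonzero scalar multiple of $Y(\theta)$, so $\Phi_{k}(\epsilon\eta)\ne 0$ for small $\epsilon\ne 0$. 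Thus $\Phi_{k}\not\equiv 0$, and as a non-identically-zero real-analytic map into a finite-dimensional space, $\Phi_{k}^{-1}(0)$ has empty interior. So $\mathcal N_{k}:=\{\Phi_{k}\ne 0\}$ is open and dense, and $\mathcal N:=\bigcap_{k=1}^{N}\mathcal N_{k}$ is open and dense in $\Sz$ by the Baire property.

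\emph{Main obstacle.} The technically heaviest point is the upgrade of the variational minimizer $u_{*}\in\dot H^{1}(\R^{3})$ to the tightly prescribed space $\AH^{m+2,p}_{1,N}$ at the sharp regularity \eqref{eq:b-term} of the asymptotic coefficients; this requires coupling the multipole expansion with the Laplacian mapping and elliptic regularity theory on asymptotic spaces from \cite{McOwenTopalov2}, and careful bookkeeping of the Schwartz source $\rho=\psi u_{*}^{3}-\varphi$ in every weighted norm. A secondary subtlety is justifying real-analyticity of $\Phi_{k}$ in the Fr\'echet setting of $\Sz$, most easily handled by factoring through a continuous inclusion $\Sz\hookrightarrow H^{m,p}_{\delta}$.
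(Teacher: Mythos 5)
Your proposal takes a genuinely different route from the paper. The paper never touches variational methods or the Newtonian-potential representation. Instead, it works directly in the asymptotic scale $\mathcal{I}^{m+2,p}_{1,N}$ (the subspace of $\A^{m+2,p}_{1,N}$ where $a_k$ is forced to be a degree-$(k-1)$ spherical harmonic), observes that $\Delta:\mathcal{I}^{\ell+2,p}_{1,N}\to W^{\ell,p}_{\gamma_N+2}$ is an isomorphism (citing Prop.\ B.1 of \cite{McOwenTopalov4}, not \cite{McOwenTopalov2}), proves the Fr\'echet derivative $d_u\F=\Delta-3\psi u^2$ is Fredholm of index $0$ with trivial kernel via the maximum principle, establishes properness of $\F$ through a maximum-principle $L^\infty$ a priori bound followed by an elliptic bootstrap and a compactness argument, and then invokes the Hadamard--Levi theorem to conclude that $\F$ is a global diffeomorphism. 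Genericity is then obtained by noting that $\{a_k=0\}$ is a closed subspace of finite codimension, so its image under the diffeomorphism $\F$ is a submanifold of finite codimension whose complement is open and dense. Your convexity/coercivity proof of existence and your monotonicity proof of uniqueness are correct and arguably more elementary than the paper's (you bypass Hadamard--Levi entirely), and your identity-theorem argument for generic non-vanishing is a viable substitute for the paper's codimension count, subject to the Fr\'echet-analyticity subtlety you flag.

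The genuine gap is exactly the step you label the ``main obstacle,'' but I think you are underestimating its weight. Having a distributional $\dot H^1$ minimizer is very far from having $u_*\in\AH^{m+2,p}_{1,N}$ with the decreasing-regularity scale $a_k\in H^{m+3+N-k,p}(S^2)$ for the coefficients and remainder $f\in H^{m+2,p}_N$, and the cited reference \cite{McOwenTopalov2} does not directly contain the regularity bootstrap you need; the right tool is precisely the isomorphism $\Delta:\mathcal{I}^{\ell+2,p}_{1,N}\to W^{\ell,p}_{\gamma_N+2}$ from \cite{McOwenTopalov4}. Moreover, your assertion that $\rho=\psi u_*^3-\varphi$ is ``Schwartz-type because $u_*$ is bounded'' jumps several steps: you first need $u_*\in L^\infty$, then smoothness of $u_*$ with controlled derivatives at infinity, before you can feed $\rho$ into the multipole expansion, and each stage requires its own quantitative elliptic estimate in the weighted scale. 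The paper avoids this entire bootstrap by never leaving the asymptotic spaces. If you want to keep your variational start, a cleaner bridge would be: once $\rho\in W^{\ell,p}_{\gamma_N+2}$ for all $\ell$, apply the isomorphism to produce $\tilde u\in\mathcal{I}^{\ell+2,p}_{1,N}$ with $\Delta\tilde u=\rho$, and then argue $u_*-\tilde u$ is a decaying harmonic function, hence zero, which gives the membership and regularity you need in one stroke. As written, however, the proposal leaves the central regularity claim unproven and only gestures at it.
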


\noindent
The existence of an equilibrium solution of \eqref{intro-semilinear1} in $\AH^{m+2,p}_{1,N}$ provides motivation for the study of the time evolution in this function space, which is the subject of this paper. However, 
the question whether these equilibrium solutions are stable in $\AH^{m,p}_N$ and what kind of stability pertains is open.

\medskip

Much of what we have done in this paper  extends to more general elliptic operators than the Laplacian in the Euclidean space. 
But we have chosen to focus on the heat equation in the Euclidean space in order to utilize the classical solution formula 
\eqref{eq:Gaussian} and make proofs as accessible as possible. The technique developed in this paper
allows us to prove that the Navier-Stokes equation is locally in time well-posed in the asymptotic spaces (cf.\ \cite{McOwenTopalov5}). 
We refer to \cite{McOwenTopalov3,McOwenTopalov4} for similar results related to the Euler equation.
It is worth mentioning that, generically, the solutions of the Euler and Navier-Stokes equations with fast decaying initial data
develop non-trivial spatial asymptotics (see e.g. \cite[Theorem 1.2]{McOwenTopalov4}).
This together with Theorem \ref{th:semilinear-equilibrium} above implies that the spatial asymptotics considered in the present paper
appear as a natural phenomenon in various equations describing important physical models.

\section{The semigroup on $H^{m,p}_\delta(\R^d)$}\label{sec:weighted_spaces}

In this section we prove preliminary results on the heat semigroup on weighted Sobolev spaces $H^{m,p}_\delta(\R^d)$
that are needed for the sequel. 


Recall (cf.\ \cite{Pazy}, \cite{ReedSimon}) that a strongly continuous semigroup $\{S(t)\}_{t\ge 0}$ of bounded linear operators 
on a Banach space $X$ is {\em analytic of angle $\vartheta\in (0,\pi/2]$} if the map $t\mapsto S(t)$ can be extended
to the complex sector   
\[
\bbS_\vartheta:=\{0\}\cup\{z\in\C: |\arg\,z|<\vartheta\}
\] 
such that i) the map $z\mapsto S(z)$ is analytic from ${\bbS}_\vartheta\backslash\{0\}$ to ${\mathcal L}(X)$, the bounded linear 
operators on $X$, ii) $S(z)$ is a semigroup, i.e.\ $S(z_1+z_2)=S(z_1)S(z_2)$ for $z_1,z_2\in {\bbS}_\vartheta$, and iii) $S(z)$ 
converges strongly to the identity operator on $X$ as $z\to 0$ in any subsector 
$\bbS_{\vartheta-\epsilon}$ with $0<\epsilon<\vartheta$.

In our case, we let $S(t)$ be defined by \eqref{eq:Gaussian} for $t>0$ and let $S(0)=I$. We can replace $t$ by $z\in {\bbS}_{\pi/2}$ 
in \eqref{eq:Gaussian} and the resultant operator $S(z)$ is still well-defined.  In fact, it is well-known (cf.\ e.g.\ \cite[IX.1]{Kato}) 
that for $1<p<\infty$, $S(t)$ is a contraction semigroup on $L^p(\R^d)$ that defines an analytic semigroup  of angle $\pi/2$ whose 
generator is $\Lap$, viewed as an unbounded operator on $L^p(\R^d)$ with domain $H^{2,p}(\R^d)$. 
We will prove the following generalization:

\begin{Th}\label{th:SonH} 
For $1< p<\infty$ and for any $m\in\Z_{\ge 0}$ and $\delta\in\R$, \eqref{eq:Gaussian} defines a strongly continuous semigroup 
of bounded operators $\{S(t)\}_{t\ge 0}$ on $H_\delta^{m,p}(\R^d)$. The generator of the semigroup is $\Lap$, 
viewed as an unbounded operator on $H^{m,p}_\delta(\R^d)$ with domain $D_{H^{m,p}_\delta}(\Lap)=H^{m+2,p}_\delta(\R^d)$.
Moreover, the semigroup is analytic of angle $\pi/2$ and  for any $\epsilon\in(0,\pi/2)$ there exists a positive constant 
$C\equiv C_{p,\delta,\epsilon}>0$ such that for any $m\in\Z_{\ge 0}$ we have
\begin{equation}\label{eq:S-estimate}
\big\|S(z)\big\|_{\mathcal{L}(H^{m,p}_\delta)}\le C\,\big(1+|z|\big)^{|\delta|/2}
\end{equation}
for all $z\in\bbS_{\pi/2-\epsilon}\setminus\{0\}$.
\end{Th}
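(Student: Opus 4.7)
The strategy is to prove \eqref{eq:S-estimate} first in the case $m=0$, then bootstrap to arbitrary $m\in\Z_{\ge 0}$ with a constant independent of $m$; the semigroup structure, analyticity, and identification of the generator will then follow by essentially classical arguments once the key bound is in hand.

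For $m=0$, observe that $S(z)$ is bounded on $L^p_\delta$ with the same norm as the operator $T(z):=M_{\x^\delta}\,S(z)\,M_{\x^{-\delta}}$ on $L^p$, where $M_h$ denotes multiplication by $h$. The latter is an integral operator with kernel
\[
K(x,y;z)\;=\;\frac{\x^\delta}{(4\pi z)^{d/2}}\,e^{-|x-y|^2/(4z)}\,\y^{-\delta}.
\]
For $z\in\bbS_{\pi/2-\epsilon}\setminus\{0\}$ one has $\re(1/z)=\cos(\arg z)/|z|\ge\sin\epsilon/|z|$, so $|e^{-|x-y|^2/(4z)}|=e^{-\cos(\arg z)|x-y|^2/(4|z|)}$. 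Combining this with Peetre's inequality $\x^\delta\y^{-\delta}\le C_\delta\langle x-y\rangle^{|\delta|}$ shows that $|K(x,y;z)|$ is dominated by the translation-invariant kernel $\kappa(x-y;|z|):=C_\delta|z|^{-d/2}\langle x-y\rangle^{|\delta|}e^{-c|x-y|^2/|z|}$ with $c=c(\epsilon)>0$. Schur's/Young's test then gives $\|T(z)\|_{\mathcal{L}(L^p)}\le\|\kappa(\cdot;|z|)\|_{L^1}$, and the change of variables $w=\sqrt{|z|}\,u$ together with $(1+|z||u|^2)^{|\delta|/2}\le(1+|z|)^{|\delta|/2}\langle u\rangle^{|\delta|}$ yields $\|\kappa(\cdot;|z|)\|_{L^1}\le C(1+|z|)^{|\delta|/2}$, giving the desired estimate in the case $m=0$.

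The case $m\ge 1$ is now essentially automatic. Since $\partial^\alpha$ commutes with the convolution operator $S(z)$ on $C^\infty_c(\R^d)$, which is dense in $H^{m,p}_\delta$, one has $\partial^\alpha S(z)f=S(z)\partial^\alpha f$ in $L^p_\delta$ for every $|\alpha|\le m$ and every $f\in H^{m,p}_\delta$. Summing the $m=0$ bound over $|\alpha|\le m$ yields \eqref{eq:S-estimate} with the same constant $C=C_{p,\delta,\epsilon}$, explaining its independence of $m$. The semigroup identity $S(z_1+z_2)=S(z_1)S(z_2)$ on $\bbS_{\pi/2}$ follows from a direct Chapman--Kolmogorov computation for the complex Gaussian kernel; holomorphy of $z\mapsto S(z)f$ on $\bbS_{\pi/2}\setminus\{0\}$ follows from the holomorphy of the kernel together with dominated convergence; and strong continuity as $z\to 0$ within any subsector $\bbS_{\pi/2-\epsilon}$ follows from the uniform bound together with the classical convergence $S(z)\phi\to\phi$ on the dense subspace $C^\infty_c(\R^d)$.

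To identify the generator, the inclusion $H^{m+2,p}_\delta\subseteq D_{H^{m,p}_\delta}(\Lap)$ with $\Lap$ acting as the generator follows by writing $S(t)f-f=\int_0^t S(s)\Lap f\,ds$ (valid on $C^\infty_c$ and extended by density using the boundedness of $\Lap\colon H^{m+2,p}_\delta\to H^{m,p}_\delta$) and then invoking strong continuity of $S$ on $H^{m,p}_\delta$. For equality of domains, I would form the resolvent $R(\lambda)=\int_0^\infty e^{-\lambda t}S(t)\,dt$ (convergent as a bounded operator on $H^{m,p}_\delta$ for $\lambda$ large, thanks to the polynomial growth) and match it with the resolvent of $\Lap\colon H^{m+2,p}_\delta\to H^{m,p}_\delta$, which is an isomorphism for such $\lambda$ by standard Fourier-analytic elliptic regularity on the weighted scale. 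The main obstacle is securing the sharp rate $|\delta|/2$ in the $L^p_\delta$ estimate uniformly in $\arg z\in[-\pi/2+\epsilon,\pi/2-\epsilon]$: Peetre's inequality is the decisive tool, converting the awkward ratio $\x^\delta/\y^\delta$ into the moderate convolution weight $\langle x-y\rangle^{|\delta|}$ that the effective Gaussian decay can absorb, and the $m$-independence of the resulting constant then propagates trivially up the Sobolev scale.
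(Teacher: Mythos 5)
Your proof is correct, and it takes a genuinely different route from the paper's, even though the decisive kernel estimate is the same. The paper first proves that $\Lap$ with domain $H^{m+2,p}_\delta$ generates an analytic semigroup $e^{z\Lap}$ of angle $\pi/2$ via the resolvent bound $\|(\lambda-\Lap)^{-1}\|\le C/|\lambda-\omega|$ on the sector $\Sigma_{\pi/2-\epsilon,\omega}$; for general $d\ge 2$ this is proved by a Schur test on the conjugated resolvent kernel $\x^\delta\y^{-\delta}|x-y|^{-\nu}H^{(1)}_\nu(i\sqrt{\lambda}|x-y|)$ using the small- and large-argument asymptotics of the Hankel function together with the same Peetre inequality you invoke (Lemma \ref{Le:elementary}). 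It then uses a tempered-distribution uniqueness lemma (Lemma \ref{le:uniqueness}) to identify $e^{t\Lap}$ with the Gaussian formula $S(t)$, and only afterward establishes the sharp polynomial growth $(1+|z|)^{|\delta|/2}$ by a direct Gaussian-kernel estimate (Proposition \ref{prop:S-estimates}) --- that last estimate is precisely your $m=0$ computation, including the commutation $\partial^\alpha S(z)=S(z)\partial^\alpha$ to get $m$-independence. By contrast you stay with the Gaussian kernel throughout, so the sharp growth rate comes for free, and you avoid the Hankel asymptotics entirely; analyticity and the semigroup law are then read off from the explicit formula and the generator is recovered via the Laplace transform $R(\lambda)=\int_0^\infty e^{-\lambda t}S(t)\,dt$. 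What the paper's longer route buys is a free-standing sectorial resolvent estimate \eqref{eq:Inf-Gen-Est} (of independent interest, and with the precise constant structure the abstract generation theorem demands) and the reusable uniqueness lemma, which the paper later applies again in the proof of Proposition \ref{pr:SonAH}. Two places in your sketch are thinner than they should be and are exactly where the paper spends its effort: (i) ``holomorphy of the kernel $+$ dominated convergence'' does not by itself give Banach-space-valued analyticity of $z\mapsto S(z)$ --- you need either a Morera/weak-to-strong holomorphy argument, or the route via \cite[Theorem 3.12, Ch.\ 3]{Kato} that the paper uses in Corollary \ref{coro:DS-estimate}; and (ii) the claim that $(\lambda-\Lap):H^{m+2,p}_\delta\to H^{m,p}_\delta$ is an isomorphism for large $\lambda$, which you call ``standard Fourier-analytic elliptic regularity on the weighted scale,'' is exactly the content of Lemma \ref{Le:apriori-est-1} and Corollary \ref{Co:apriori-estimates}, which the paper proves by conjugating $\Lap$ by $J_\delta$ to a lower-order pseudodifferential perturbation and citing $L^p$-boundedness from \cite{AmannHieberSimonett}. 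Both points are repairable, but neither is automatic, so they deserve explicit statements rather than appeals to standard theory.
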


\begin{Rem}
Note that the polynomial estimate \eqref{eq:S-estimate} is stronger than the exponential estimate 
$\big\|S(z)\big\|_{\mathcal{L}(H^{m,p}_\delta)}\le C_{\epsilon,\omega}\,e^{\omega|z|}$ for any $\omega,\epsilon>0$
suggested by the general theory of semigroups. Also note that $\delta<0$ in $H^{m,p}_\delta$ allows functions
that grow as $|x|\to\infty$, but not fast enough to cause finite-time blow-up like occurs in \cite{Tychonoff}, \cite{RobinsonRodriguez-Bernal}.
Finally, note that Propositions 3.1 and 3.2 (iii) in \cite{AMR-B} imply Theorem \ref{th:SonH} in the case when $m=0$,
but without specifying the angle of analytic extension as $\pi/2$ and with \eqref{eq:S-estimate} established
only for real $z=t>0$.
\end{Rem}

To prove Theorem \ref{th:SonH}, we first show that $\Lap$ with domain $H^{m+2,p}_\delta(\R^d)$ generates 
an analytic semigroup of angle $\vartheta=\pi/2$ on $H_\delta^{m,p}(\R^d)$ denoted by $e^{z\Lap}$. 
Our second step is to prove that $S(t)=e^{t\Lap}$ for all $t\ge 0$.
We begin with the following simple lemma. We include the proof for convenience.

\begin{Lem}\label{le:J_delta-H} 
For $1\le p<\infty$ and for any $m\in\Z_{\ge 0}$ and $\delta\in\R$ the multiplication operator $J_\delta f:=\x^\delta f$ 
defines an isomorphism
\begin{equation}\label{eq:J_delta}
J_\delta : H_\delta^{m,p}(\R^d)\to H^{m,p}(\R^d).
\end{equation}
In fact, \eqref{eq:J_delta} is an isometry for $m=0$.
\end{Lem}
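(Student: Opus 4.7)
The plan is to handle the case $m=0$ first, which is immediate: since $\|J_\delta f\|_{L^p}=\|\x^\delta f\|_{L^p}=\|f\|_{L^p_\delta}$ by definition, the map is an isometric isomorphism (its inverse is $g\mapsto\x^{-\delta}g$). So I only need to work harder for $m\ge 1$.

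The core technical ingredient I would establish first is the pointwise weight derivative estimate
\begin{equation*}
|\partial^\gamma \x^\sigma|\le C_{\gamma,\sigma}\,\x^{\sigma-|\gamma|},\qquad x\in\R^d,\ \gamma\in\Z_{\ge 0}^d,\ \sigma\in\R,
\end{equation*}
which follows by a straightforward induction on $|\gamma|$ using $\partial_j\x=x_j/\x$ together with $|x_j|\le\x$ (so each differentiation shifts the exponent down by one and multiplies by a bounded quantity). In particular, $|\x^\delta\,\partial^\gamma\x^{-\delta}|\le C\,\x^{-|\gamma|}\le C$ for all multi-indices $\gamma$.

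With this in hand, boundedness of $J_\delta:H^{m,p}_\delta\to H^{m,p}$ is a Leibniz expansion: for $f\in C^\infty_c(\R^d)$,
\begin{equation*}
\partial^\alpha(\x^\delta f)=\sum_{\beta\le\alpha}\binom{\alpha}{\beta}(\partial^{\alpha-\beta}\x^\delta)\,\partial^\beta f,
\end{equation*}
so $|\partial^\alpha(\x^\delta f)|\le C\sum_{\beta\le\alpha}\x^{\delta-|\alpha-\beta|}|\partial^\beta f|\le C\sum_{\beta\le\alpha}\x^\delta|\partial^\beta f|$, and taking $L^p$-norms yields $\|J_\delta f\|_{H^{m,p}}\le C\|f\|_{H^{m,p}_\delta}$. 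Since $J_\delta$ sends $C^\infty_c(\R^d)$ into itself and $C^\infty_c$ is dense in $H^{m,p}_\delta$ (by definition) and in $H^{m,p}$, the estimate extends to a bounded linear map on the full space. The same argument, applied with $-\delta$ in place of $\delta$ and using the pointwise bound on $\partial^\gamma\x^{-\delta}$, shows that $g\mapsto\x^{-\delta}g$ is bounded $H^{m,p}\to H^{m,p}_\delta$ with norm $\le C$.

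Finally, on $C^\infty_c$ the two operators are pointwise inverses of each other, and since $C^\infty_c$ is dense in both target spaces, they remain inverses after extension; hence $J_\delta$ is a topological isomorphism. The constant $C$ arising from the Leibniz expansion clearly depends on $m$ (through the multi-indices $|\alpha|\le m$) and on $\delta$ (through the pointwise weight estimate), and it reduces to $1$ when $m=0$, as claimed. The main (mild) obstacle is simply to set up the induction for the weight derivative estimate and keep track that the density argument allows extension from $C^\infty_c$ to the full weighted space; no deeper analytic difficulty is involved.
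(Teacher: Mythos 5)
Your proof is correct and follows essentially the same route as the paper: the $m=0$ isometry, the pointwise estimate $|\partial^\beta\x^\delta|\le C\x^{\delta-|\beta|}$, a Leibniz expansion, and density of $C^\infty_c$. The only cosmetic difference is that you establish the isomorphism by directly bounding the explicit inverse $J_{-\delta}$, whereas the paper observes surjectivity (via $g=\x^{-\delta}f$) and then invokes the inverse mapping theorem; both are fine.
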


\proof
Clearly $J_\delta$ is injective. For $m=0$, we have 
\[
\|J_\delta f\|_{L^p}^p=\int |\x^\delta f(x)|^p\,dx=\|f\|_{L_\delta^p}^p,\]
so \eqref{eq:J_delta} is an isometry, and it is surjective since any $f\in L^p(\R^d)$ can be written as $f=J_\delta g$,
where $g=\x^{-\delta}f\in L^p_\delta(\R^d)$. Now  consider $m>0$.
For any multi-index $0\le|\beta|\le m$ we have $\big|\partial^\beta(\x^\delta)\big|\le C\x^{\delta-|\beta|}\le C\x^\delta$ with a constant
$C>0$ depending on $m$, which implies that for any multi-index $0\le|\alpha|\le m$,
\begin{equation*}
\big|\partial^\alpha\left(\x^\delta f\right)\big|\le C_1\,\sum_{|\beta|\le |\alpha|}\x^\delta\,\big|\partial^\beta f(x)\big|
\end{equation*}
with a constant $C_1>0$ depending on $m$.
This and the Jensen inequality then imply
\[
\|J_\delta f\|_{H^{m,p}}=\sum_{|\alpha|\le m}\big\|\partial^\alpha\big(\x^\delta f\big)\big\|_{L^p}
\le C_2 \sum_{|\beta|\le m}\big\| \x^\delta\partial^\beta f\big\|_{L^p}=C_2\,\|f\|_{H^{m,p}_\delta}
\]
for some constant $C_2>0$ depending on $m>0$.
This shows that \eqref{eq:J_delta} is bounded, and it is also surjective since any $f\in H^{m,p}(\R^d)$ can be written as 
$f=J_\delta g$, where $g=\x^{-\delta}f\in H^{m,p}_\delta(\R^d)$. By the inverse mapping theorem, \eqref{eq:J_delta} is 
an isomorphism.
\endproof

Let us now study $\Lap$ as the generator of an analytic semigroup on $H^{m,p}_\delta(\R^d)$.
Recall (cf.\ \cite[\S\,2.5]{Pazy}, \cite{ReedSimon}) that an unbounded operator $\Lambda$ on a 
Banach space $X$ generates an analytic semigroup  of angle $\vartheta$  when $\Lambda$ is a closed operator with dense domain $D$, 
its resolvent set $\rho(\Lambda)$ contains the sector
\begin{equation}\label{def:S_theta,omega}
\Sigma_{\vartheta,\omega}:=\big\{\lambda\in\C:\lambda\not=\omega,\,|\arg(\lambda-\omega)|<\vartheta+\pi/2\big\},
\,\,\text{for some}\,\,\omega\in\R,
\end{equation}
and for every $\epsilon\in (0,\vartheta)$ there exists  $C_{\epsilon,\omega}>0$ so that the resolvent operator 
$R_\Lambda(\lambda)=(\lambda I-\Lambda)^{-1}$ satisfies
\begin{equation}\label{eq:Inf-Gen-Est}
\big\|R_\Lambda(\lambda)\big\|_{\mathcal{L}(X)}\le\frac{C_{\epsilon,\omega}}{|\lambda-\omega|}
\quad\hbox{for all  $\lambda\in\Sigma_{\vartheta-\epsilon,\omega}$}.
\end{equation}
In this case, the analytic semigroup $e^{z\Lambda}$ is defined by
\begin{subequations} \label{e^zGamma}
\begin{equation}\label{def:e^zGamma}
e^{z\Lambda}:=\frac{1}{2\pi i}\int_{\omega+\Gamma} e^{z\lambda} R_\Lambda(\lambda)\,d\lambda
\quad\hbox{for}\,\,z\in{\bbS}_\vartheta\backslash\{0\},
\end{equation}
where $\omega+\Gamma$ is the shift by $\omega$ of a curve $\Gamma$ defined as follows: choose $\rho_0>0$ and 
$\phi\in(\pi/2,\pi/2+\vartheta)$ such that $|z|>\rho_0$ and $|\arg z|<\phi-\pi/2$; then let 
\begin{equation}\label{def:gamma}
\Gamma:=\big\{\lambda=\rho \,e^{-i\phi} : \rho_0\le \rho<\infty\big\}\cup
\big\{\lambda=\rho_0\,e^{i\theta} : |\theta|\le\phi\big\}\cup
\big\{\lambda=\rho\,e^{i\phi} : \rho_0\le\rho<\infty\big\}
\end{equation}
\end{subequations}
is oriented so that its orientation is counter-clock-wise around the origin. The semigroup  satisfies 
$\|e^{z\Lap}\|_{\mathcal{L}(X)}\le M_{\epsilon,\delta}\,e^{\omega|z|}$ for $z\in {\bbS}_{\vartheta-\epsilon}$
and any $\epsilon\in(0,\vartheta)$.

\begin{Prop}\label{prop:e^tDelta-H} 
For $1< p<\infty$ and for any $m\in\Z_{\ge 0}$ and $\delta\in\R$  the operator $\Delta$ with domain 
$D=H^{m+2,p}_\delta(\R^d)$ generates an analytic semigroup $e^{z\Delta}$ of angle $\pi/2$ on 
$H^{m,p}_\delta(\R^d)$ and for any given $\omega>0$ and $\epsilon\in(0,\pi/2)$ it satisfies \eqref{eq:Inf-Gen-Est}
with $\vartheta=\pi/2$ and $X\equiv H^{m,p}_\delta(\R^d)$.  
\end{Prop}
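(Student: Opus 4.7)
The strategy is to conjugate by the weight isomorphism $J_\delta$ of Lemma \ref{le:J_delta-H}, reducing the problem to semigroup generation on the unweighted space $H^{m,p}(\R^d)$, where it is classical. Define $\tilde\Lap := J_\delta\Lap J_\delta^{-1}$ on $H^{m,p}(\R^d)$ with natural domain $J_\delta(H^{m+2,p}_\delta) = H^{m+2,p}(\R^d)$. A direct chain-rule computation yields
\[
\tilde\Lap = \Lap + B,\qquad B := -2\delta\,\x^{-2}\,x\cdot\nabla + \x^\delta\,\Lap\bigl(\x^{-\delta}\bigr),
\]
a first-order differential operator whose coefficients are smooth with all derivatives bounded on $\R^d$; in fact the first-order coefficient is $O(\x^{-1})$ and the zeroth-order one is $O(\x^{-2})$, so $B$ is of strictly lower order than $\Lap$ and has decaying coefficients.

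Next I would invoke the classical resolvent bounds for $\Lap$ on $H^{m,p}(\R^d)$ (proved by commuting $\partial^\alpha$ with $\Lap$ and applying the $L^p$ result of \cite[IX.1]{Kato}): for each $\epsilon\in(0,\pi/2)$,
\[
\big\|(\lambda-\Lap)^{-1}\big\|_{\mathcal L(H^{m,p})}\le\frac{C_\epsilon}{|\lambda|},\qquad \big\|\Lap(\lambda-\Lap)^{-1}\big\|_{\mathcal L(H^{m,p})}\le C_\epsilon,\qquad \lambda\in\Sigma_{\pi/2-\epsilon,0}.
\]
Interpolating these gives $\|(\lambda-\Lap)^{-1}\|_{\mathcal L(H^{m,p},H^{m+1,p})}\le C_\epsilon|\lambda|^{-1/2}$, and since $B\colon H^{m+1,p}\to H^{m,p}$ is bounded,
\[
\big\|B(\lambda-\Lap)^{-1}\big\|_{\mathcal L(H^{m,p})}\le C_\epsilon'|\lambda|^{-1/2}\quad\text{for}\quad\lambda\in\Sigma_{\pi/2-\epsilon,0}.
\]
For $|\lambda|\ge R_\epsilon$ with $R_\epsilon$ large this is at most $1/2$, so the Neumann series
$(\lambda-\tilde\Lap)^{-1} = (\lambda-\Lap)^{-1}\sum_{k\ge 0}\bigl(B(\lambda-\Lap)^{-1}\bigr)^k$
converges and produces $\|(\lambda-\tilde\Lap)^{-1}\|_{\mathcal L(H^{m,p})}\le 2C_\epsilon/|\lambda|$.

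It remains to cover the bounded portion of a fixed sector $\Sigma_{\pi/2-\epsilon,\omega}$, $\omega>0$. Truncating the decaying coefficients of $B$ by compactly supported cutoffs and applying Rellich, $B(\lambda-\Lap)^{-1}$ is seen to be a norm limit of compact operators on $H^{m,p}(\R^d)$, hence compact; the analytic Fredholm theorem then confines $\sigma(\tilde\Lap)\setminus(-\infty,0]$ to a discrete set, and a standard argument (solutions of the eigenvalue equation with $\re\lambda>0$ would have to lie in $H^{m,p}$ yet asymptotically match exponentially growing modes of the limit equation $(\Lap-\lambda)u=0$) rules out eigenvalues in the open right half-plane. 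Hence $\Sigma_{\pi/2-\epsilon,\omega}\subset\rho(\tilde\Lap)$ with a uniform resolvent bound on compact subsets, and combining with the Neumann estimate yields \eqref{eq:Inf-Gen-Est} for $\tilde\Lap$. Finally $(\lambda-\Lap)^{-1} = J_\delta^{-1}(\lambda-\tilde\Lap)^{-1}J_\delta$ transfers everything back to $H^{m,p}_\delta$ and identifies the generator's domain as $H^{m+2,p}_\delta$. The main technical issue is the $|\lambda|^{-1/2}$ decay in $\|B(\lambda-\Lap)^{-1}\|$, which reflects that $B$ has relative bound $0$ with respect to $\Lap$; this is precisely what preserves the full angle $\pi/2$ under the perturbation.
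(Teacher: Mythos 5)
Your proposal is correct in substance and takes a genuinely different route from the paper. The paper also conjugates by $J_\delta$, but at the level of the resolvent kernel: it writes down the explicit Green's function of $(\lambda-\Delta)^{-1}$ (with the Hankel function formula \eqref{eq:K(x,y,lambda)}), multiplies by $\x^\delta\y^{-\delta}$, bounds the result using the Hankel asymptotics \eqref{eq:HankelAsym} and $\x^\delta\y^{-\delta}\le C\langle x-y\rangle^{|\delta|}$, and applies the Schur test to get $\|R(\lambda)\|_{\mathcal L(L^p_\delta)}\le C_{\delta,\kappa,\epsilon}/|\lambda|$ directly for all $\lambda\in\Sigma_{\pi/2-\epsilon,0}$ with $|\lambda|>\kappa$; the $H^{m,p}_\delta$ case then follows by commuting with $\partial^\alpha$. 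Your proof instead conjugates at the level of the operator, writing $\tilde\Delta=\Delta+B$ with $B$ a lower-order perturbation with decaying coefficients, and uses Neumann series for large $|\lambda|$ plus compactness and analytic Fredholm for the bounded part of the sector. What the paper's route buys is that the kernel estimate works uniformly for all $|\lambda|>\kappa$ (any fixed $\kappa>0$), so no compactness argument or eigenvalue exclusion is needed — the whole shifted sector is handled in one stroke. What your route buys is independence from the explicit Bessel-function machinery and working directly in $H^{m,p}$; the price is the extra functional-analytic apparatus (complex interpolation of Bessel potential spaces, Rellich, analytic Fredholm).

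One step in your proof should be tightened. You rule out eigenvalues of $\tilde\Delta$ only ``in the open right half-plane,'' but the bounded region $\Sigma_{\pi/2-\epsilon,\omega}\cap\{|\lambda|\le R_\epsilon\}$ that the Neumann series misses can contain points with $\re\lambda<0$ (since $\Sigma_{\pi/2-\epsilon,\omega}$ is a sector of half-angle $\pi-\epsilon>\pi/2$ around $\omega$). You need no eigenvalues anywhere in $\C\setminus(-\infty,0]$. Fortunately the conjugation you already set up gives this immediately and cleanly: $\tilde\Delta=J_\delta\Delta J_{-\delta}$, so an eigenfunction $u\in H^{m+2,p}$ of $\tilde\Delta$ with eigenvalue $\lambda$ corresponds to $g=\x^{-\delta}u\in H^{m+2,p}_\delta\subset\mathcal S'(\R^d)$ with $\Delta g=\lambda g$; taking Fourier transforms gives $(|\xi|^2+\lambda)\widehat g=0$, and since $|\xi|^2+\lambda\ne 0$ for all $\xi$ when $\lambda\notin(-\infty,0]$, $\widehat g=0$. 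This replaces the heuristic ``exponentially growing modes'' argument with a two-line rigorous one and covers the full sector, after which the rest of your argument goes through.
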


\begin{Rem}
Proposition \ref{prop:e^tDelta-H} with a weaker version of the inequality \eqref{eq:Inf-Gen-Est} can be deduced 
from \cite[Theorem 9.4]{AmannHieberSimonett} in the case when $m=0$.
\end{Rem}

\begin{proof}[Proof of Proposition \ref{prop:e^tDelta-H}]
By Corollary \ref{Co:apriori-estimates} in Appendix B, we know that the unbounded operator $\Lap$ on 
$H^{m,p}_\delta(\R^d)$ with domain $D=H^{m+2,p}_\delta(\R^d)$ is a closed operator, and it is densely defined 
(since $C^\infty_0(\R^d)$ is dense in both $H^{m,p}_\delta(\R^d)$ and $H^{m+2,p}_\delta(\R^d)$); so we need 
only prove the resolvent estimate \eqref{eq:Inf-Gen-Est}.
We start with $m=0$. 
Using Lemma \ref{le:J_delta-H}, the resolvent $R(\lambda)=(\lambda-\Lap)^{-1}$ on $L^{p}_\delta(\R^d)$ is 
equivalent to $J_{\delta}(\lambda-\Lap)^{-1}J_{-\delta}$ on  $L^{p}(\R^d)$. Let us first consider estimates for $d=3$, 
since then we have a simple formula for the integral kernel of $(\lambda-\Lap)^{-1}$ (cf.\ \cite{Kato}, p.\ 493):
\begin{equation}\label{eq:K(x,y,lambda),d=3}
K(x,y;\lambda)=\frac{1}{4\pi|x-y|}\exp\left(-\sqrt{\lambda}\,|x-y|\right) \quad\hbox{for}\ \re\sqrt{\lambda}>0.
\end{equation}
The integral kernel for $J_{\delta}(\lambda-\Lap)^{-1}J_{-\delta}$ on $L^{p}(\R^d)$ is
\begin{equation}\label{eq:K_delta(x,y,lambda),d=3}
K_{\delta}(x,y;\lambda)=\frac{\x^\delta\y^{-\delta}}{4\pi|x-y|} 
\exp\left(-\sqrt{\lambda}\,|x-y|\right) \quad\hbox{for}\ \re\,\sqrt{\lambda} >0.
\end{equation}
By the Schur test for $L^p$-boundedness, we want to estimate $\int_{\R^3} |K_{\delta}(x,y;\lambda)|\,dx$ for 
$y\in \R^3$ and $\int_{\R^3} |K_{\delta}(x,y;\lambda)|\,dy$ for $x\in \R^3$.
Using $\x^\delta\y^{-\delta}\le C\langle x-y\rangle^{|\delta|}$ (cf.\ Lemma \ref{Le:elementary}) and a change of 
variables,  it suffices to estimate
\[
I:=\frac{1}{4\pi}\int_{\R^3} \frac{\x^{|\delta|}}{|x|} e^{-\re\,\sqrt{\lambda}\,|x|}\,dx
=\int_0^\infty (1+r^2)^{|\delta|/2}\,r\, e^{-\re\,\sqrt{\lambda}\,r}\,dr.
\]
After another change of variables, we have
 \[
I=\frac{1}{\rm (Re\sqrt{\lambda})^2 }\int_0^\infty 
\left(1+\frac{s^2}{({\rm Re\sqrt{\lambda}})^2}\right)^{|\delta|/2}s\,e^{-s}\,ds.
\]
If $\delta=0$, we use $\int_0^\infty s\,e^{-s}\,ds=1$ to obtain the $L^p$-boundedness
 \[
\big\|R(\lambda)\big\|_{\mathcal{L}(L^p)}
\leq\frac{1}{\rm (Re\sqrt{\lambda})^2 }\le \left(|\lambda|\sin^2\frac{\epsilon}{2}\right)^{-1}\quad\hbox{for}\  
|{\rm arg}\,\lambda|\le \pi-\epsilon,
\]
which is the estimate \eqref{eq:Inf-Gen-Est} with $\vartheta=\pi/2$ and $\omega=0$, showing the well-known result that 
$\Lap$ generates a bounded analytic semgroup of angle $\pi/2$ on $L^p(\R^3)$. But, of course, we are interested in 
$\delta\not=0$, where $\int_0^\infty \left(1+s^2/({\rm Re\sqrt{\lambda}})^2\right)^{|\delta|/2}s\,e^{-s}\,ds$ becomes 
infinite as $\lambda\to 0$. However, if we restrict $\lambda$ to lie outside the ball 
$B_\kappa(0):=\{\lambda\in\C : |\lambda|\le\kappa\}$ with $\kappa>0$, then we get the following estimate:
\begin{equation}\label{eq:resolventestimate2}
\big\|R(\lambda)\big\|_{\mathcal{L}(L^p_\delta)}\le \frac{C_{\delta,\kappa,\epsilon}}{|\lambda| }\quad\hbox{for}\ \lambda\in 
\Sigma_{\pi/2-\epsilon,0}\backslash B_\kappa(0).
\end{equation}
Finally, for any $\omega>0$ and $\epsilon\in(0,\pi/2)$ we can choose $\kappa$ so that $\Sigma_{\pi/2-\epsilon,\omega}\subseteq 
\Sigma_{\pi/2-\epsilon,0}\backslash B_\kappa(0)$, 
and then use $|\lambda-\omega|\le C_{\epsilon,\omega}|\lambda|$ for $\lambda\in\Sigma_{\pi/2-\epsilon,\omega}$ to conclude
\begin{equation}\label{eq:resolventestimate3}
\big\|R(\lambda)\big\|_{\mathcal{L}(L^p_\delta)}\le\frac{C_{\delta,\epsilon,\omega}}{|\lambda-\omega| }\quad\hbox{for}\ 
\lambda\in\Sigma_{\pi/2-\epsilon,\omega},
\end{equation}
which is the estimate \eqref{eq:Inf-Gen-Est} showing that $\Lap$ generates an analytic semgroup of angle $\pi/2$ on 
$L_\delta^p(\R^3)$.

For general $d\ge 2$, the integral kernel of $(\lambda-\Lap)^{-1}$ for $\re\sqrt{\lambda}>0$ is given by (cf.\ \cite{Taylor} 
and  \cite{Lebedev})
\begin{equation}\label{eq:K(x,y,lambda)}
K(x,y;\lambda)=\frac{i}{4} \left(\frac{i\sqrt{\lambda}}{2\pi|x-y|}\right)^\nu H_\nu^{(1)}\left(i\sqrt{\lambda}\,|x-y|\right), 
\quad \nu=(d-2)/2,
\end{equation}
where $H_{\nu}^{(1)}(z)$ is the first Hankel function. (For $d=3$, $\nu=1/2$ and 
$H^{(1)}_{1/2}(z)=-i\left(2/\pi z\right)^{1/2}e^{iz}$, so \eqref{eq:K(x,y,lambda)} reduces to 
\eqref{eq:K(x,y,lambda),d=3}.) To obtain an estimate for $(\lambda-\Lap)^{-1}$ on $L^p_\delta(\R^d)$, we need to 
estimate the operator $J_\delta (\lambda-\Lap)^{-1}J_{-\delta}$ on $L^p(\R^d)$ with integral kernel
\begin{equation}\label{eq:K_delta(x,y,lambda)}
K_\delta(x,y;\lambda)=\frac{i}{4} \left(\frac{i\sqrt{\lambda}}{2\pi}\right)^\nu \frac{\x^\delta\y^{-\delta}}{|x-y|^\nu} 
\,H_\nu^{(1)}\left(i\sqrt{\lambda}\,|x-y|\right).
\end{equation}
In view of Lemma \ref{Le:elementary}, to estimate $\int_{\R^d} |K_{\delta}(x,y;\lambda)|\,dx$ for $y\in \R^d$ and 
$\int_{\R^d} |K_{\delta}(x,y;\lambda)|\,dy$ for $x\in \R^d$ it is enough to estimate
\[
I:=|\lambda|^{\nu/2}\int_0^\infty (1+r^2)^{|\delta|/2}\,r^{d-1-\nu}\left| H_\nu^{(1)}
\left(i\sqrt{\lambda}\,r\right) \right| \,dr\,.
\]
We split this into two integrals
\[
\begin{aligned}
I_1:&= |\lambda|^{\nu/2}\int_0^{(\re\sqrt{\lambda})^{-1}}(1+r^2)^{|\delta|/2}\,r^{d-1-\nu}
\left| H_\nu^{(1)}\left(i\sqrt{\lambda}\,r\right) \right| \,dr,
\\
I_2:&= |\lambda|^{\nu/2}\int_{(\re\sqrt{\lambda})^{-1}}^\infty(1+r^2)^{|\delta|/2}\,r^{d-1-\nu}
\left| H_\nu^{(1)}\left(i\sqrt{\lambda}\,r\right) \right| \,dr.
\end{aligned}
\]
Recall (cf.\ \cite{Lebedev}) the asymptotic behaviors
\begin{equation}\label{eq:HankelAsym}
H_\nu^{(1)}(z) = \begin{cases}
-\frac{i}{\pi} \Gamma(\nu)\left(\frac{2}{z}\right)^\nu+O(|z|^{2-\nu})\quad\hbox{as $|z|\to\ 0$, $|{\rm arg}(z)|<\pi$}, \\
\sqrt{\frac{2}{\pi z}} \,\exp\Big(i\left(z-\frac{\nu\pi}{2}-\frac{\pi}{4}\right)\Big)
\left(1+O(|z|^{-1})\right)\quad\hbox{as $|z|\to\infty$, 
$|{\rm arg}(z)|<\pi-\varepsilon$}.
\end{cases}
\end{equation}
Now $\re\sqrt{\lambda}=|\lambda|^{1/2}\cos\big(\frac{1}{2}\arg\lambda\big)$ implies 
\[
0<\re\sqrt\lambda\le |\lambda|^{1/2}\le C\,\re\sqrt{\lambda}\quad\hbox{for}\ \lambda\in  \Sigma_{\pi/2-\epsilon,0},
\]
with a constant $C>0$ depending on $\epsilon>0$.
So $0<r<(\re\sqrt{\lambda})^{-1}$ implies that $z=i\sqrt{\lambda}\,r$ satisfies $|z|\le C$ and we can
use the first line in \eqref{eq:HankelAsym} to estimate 
$|\lambda|^{\nu/2}\big|H_\nu^{(1)}(i\sqrt\lambda \,r)\big|\le C_\epsilon r^{-\nu}$
uniformly in $\lambda\in\Sigma_{\pi/2-\epsilon,0}$ and $0<r<(\re\sqrt{\lambda})^{-1}$.
Consequently, using $|\lambda|>\kappa$ and $d-1-2\nu=1$, we obtain
\[
I_1\le C_{\epsilon}\,\int_0^{(\re\sqrt{\lambda})^{-1}}\big(1+r^2\big)^{|\delta|/2}\,r^{d-1-2\nu}\,dr
\le C_{\delta,\kappa,\epsilon}'\int_0^{(\re\sqrt{\lambda})^{-1}} r\,dr \le 
\frac{C_{\delta,\kappa,\epsilon}'}{2(\re\sqrt{\lambda})^{2}} \le \frac{C_{\delta,\kappa,\epsilon}}{|\lambda|}
\]
for some constants $C_{\delta,\kappa,\epsilon}'>0$ and $C_{\delta,\kappa,\epsilon}>0$.
Similarly, for $r>(\re\sqrt{\lambda})^{-1}$ we have that $z=i\sqrt{\lambda}\,r$ satisfies $|z|\ge 1$ so we can use 
the second line in \eqref{eq:HankelAsym} to estimate 
$\big|H^{(1)}_\nu(i\sqrt{\lambda}\,r)\big|\le C|\lambda|^{-1/4}r^{-1/2}e^{-r\re(\sqrt{\lambda})}$. Consequently, 
using $\frac{\nu}{2}-\frac{1}{4}=\frac{d-3}{4}$ and $d-1-\nu-\frac{1}{2}=\frac{d-1}{2}$, we can estimate
\[
\begin{aligned}
I_2 & \le C\, |\lambda|^{\frac{\nu}{2}-\frac{1}{4}} \int_{(\re\sqrt{\lambda})^{-1}}^\infty 
(1+r^2)^{|\delta|/2}\,r^{d-1-\nu-\frac{1}{2}}e^{-\re\sqrt{\lambda}\,r} \,dr \\
& = C\,|\lambda|^{\frac{d-3}{4}} \int_{(\re\sqrt{\lambda})^{-1}}^\infty 
(1+r^2)^{|\delta|/2}\,r^{\frac{d-1}{2}}e^{-\re\sqrt{\lambda}\,r} \,dr
\end{aligned}
\]
for some constant $C>0$. Now we change integration variable to $s=(\re\sqrt{\lambda})\,r$ to obtain that for $|\lambda|>\kappa$
and $\lambda\in\Sigma_{\pi/2-\epsilon,0}$,
\[
\begin{aligned}
I_2 &\le C\,|\lambda|^{\frac{d-3}{4}} \int_1^\infty \left(1+\frac{s^2}{(\re\sqrt{\lambda})^{2}}
\right)^{|\delta|/2}\frac{s^{(d-1)/2}}{(\re\sqrt{\lambda})^{(d-1)/2}}\,e^{-s}\,\frac{ds}{\rm Re\sqrt{\lambda}} \\
&\le \frac{C_\epsilon}{|\lambda|} \int_1^\infty 
\left(1+\frac{s^2}{(\re\sqrt{\lambda})^{2}}\right)^{|\delta|/2} s^{(d-1)/2} e^{-s}\,ds
=\frac{C_{\delta,\kappa,\epsilon}}{|\lambda|}.
\end{aligned}
\]
This establishes \eqref{eq:resolventestimate2}, and hence \eqref{eq:resolventestimate3}, for  general $d\ge 2$.

Notice that \eqref{eq:resolventestimate3} implies
\begin{equation}\label{eq:resolventestimate4}
\|f\|_{L^p_\delta} \le \frac{C_{\delta,\epsilon,\omega}}{|\lambda-\omega| } \|(\lambda-\Delta)f\|_{L_\delta^p} \quad
\hbox{for}\ \lambda\in 
\Sigma_{\pi/2-\epsilon,\omega}\ \hbox{and}\ f\in H^{2,p}_\delta(\R^d).
\end{equation}
Conversely, if $g\in L_\delta^p(\R^d) $ then $f=(\lambda-\Lap)^{-1}g$ satisfies
$(\lambda-\Lap)f\in L_\delta^p(\R^d) $, and we know by elliptic regularity\footnote{See Lemma \ref{Le:apriori-est-1} in 
Appendix B.} that $f\in H^{2,p}_\delta(\R^d)$. Hence we can apply \eqref{eq:resolventestimate4} to obtain  
\eqref{eq:resolventestimate3}.

Now let us consider $m\ge 1$. 
For $f\in H^{m+2,p}_\delta(\R^d)$ and $|\alpha|\le m$ we have $\partial^\alpha f\in H^{2,p}_\delta(\R^d)$, so we can
apply \eqref{eq:resolventestimate4} to conclude
\[
\|\partial^{\alpha}f\|_{L^{p}_\delta} \le \frac{C_{\delta,\epsilon,\omega}}{|\lambda-\omega|}\,
\|(\lambda I-\Lap)\partial^\alpha f\|_{L^{p}_\delta}
=\frac{C_{\delta,\epsilon,\omega}}{|\lambda-\omega|}\,\|\partial^\alpha\left((\lambda I-\Lap) f\right)\|_{L^{p}_\delta}.
\]
Summing this over $|\alpha|\le m$, we obtain 
\begin{equation}\label{eq:Inf-Gen-Est-4}
\|f\|_{H^{m,p}_\delta}\le\frac{C_{\delta,\epsilon,\omega}}{|\lambda-\omega|}\,\|(\lambda I-\Lap)f\|_{H^{m,p}_\delta}
\quad\hbox{for all $f\in H^{m+2,p}_\delta(\R^d)$ and $\lambda\in\Sigma_{\pi/2-\epsilon,\omega}$}.
\end{equation}
As observed above, this and Corollary \ref{Co:apriori-estimates} in Appendix \ref{sec:aux-lemmas} imply the desired resolvent estimate, 
so $\Lap$ with domain $H^{m+2,p}_\delta(\R^d)$ generates an analytic semigroup of angle $\pi/2$ on $H_\delta^{m,p}(\R^d)$ 
(see e.g. \cite[\S\,2.5, Theorem 5.2]{Pazy}).
\end{proof}

\medskip
In order to prove Theorem \ref{th:SonH}, it remains to show $S(t)=e^{t\Lap}$ on $H^{m,p}_\delta(\R^d)$. For this, 
we need a uniqueness result that we shall prove in a more general context. Let ${\mathcal S}'(\R^d)$ denote the space of 
tempered distributions, i.e.\ the dual of the Schwartz class ${\mathcal S}(\R^d)$. For any $v\in{\mathcal S}'(\R^d)$ we
define $\Delta v\in{\mathcal S}'(\R^d)$ using distributional derivatives: 
$\langle \Delta v,\phi\rangle = \langle  v,\Delta\phi\rangle$ for $\phi\in {\mathcal S}(\R^d)$. 
We consider solutions of \eqref{eq:heat} when $v\in {\mathcal S}'(\R^d)$. 
Recall that $w\in C^k((0,\infty),{\mathcal S}' (\R^d))$ for a given integer $k\ge 0$ if 
for any test function $\phi\in{\mathcal S}(\R^d)$ the map $t\mapsto\langle w(t),\phi\rangle$, $(0,\infty)\to\R$, is $k$-times
continuously differentiable. The following uniqueness result is well-known. 
The proof is provided for the convenience of the reader.

\begin{Lem}\label{le:uniqueness} 
Suppose that $w\in C([0,\infty),{\mathcal S}'(\R^d))\cap C^1((0,\infty),{\mathcal S}'(\R^d))$ satisfies $w_t(t)=\Delta w(t)$ 
as distributions for $t>0$ and $w|_{t=0}=0$. Then $w(t)=0$ for all $t\ge 0$.
\end{Lem}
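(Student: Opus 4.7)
My plan is to run the standard ``duality'' or ``transposition'' argument against the backward heat equation on Schwartz class. Fix $\phi\in\mathcal{S}(\R^d)$ and $T>0$, and set $\psi(t):=S(T-t)\phi$ for $0\le t\le T$, where $S(\tau)\phi$ is defined by \eqref{eq:Gaussian}. Since the Fourier transform of $S(\tau)\phi$ is $e^{-\tau|\xi|^2}\widehat\phi$, one sees that $\psi(t)\in\mathcal{S}(\R^d)$ for all $t\in[0,T]$, the map $t\mapsto\psi(t)$ is continuous $[0,T]\to\mathcal{S}$ and $C^1$ on $[0,T)$ as a map into $\mathcal{S}$, and $\partial_t\psi(t)=-\Delta\psi(t)$ in $\mathcal{S}$ for $0\le t<T$.

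Now consider the scalar function $g:[0,T]\to\C$ defined by
\[
g(t):=\langle w(t),\psi(t)\rangle.
\]
Continuity of $g$ on $[0,T]$ follows from the joint continuity of the duality pairing together with $w\in C([0,\infty),\mathcal{S}')$ and $\psi\in C([0,T],\mathcal{S})$. For $0<t<T$, using the $C^1$ regularity of $w$ into $\mathcal{S}'$ and of $\psi$ into $\mathcal{S}$, the product rule for the pairing gives
\[
g'(t)=\langle w_t(t),\psi(t)\rangle+\langle w(t),\partial_t\psi(t)\rangle
=\langle \Delta w(t),\psi(t)\rangle-\langle w(t),\Delta\psi(t)\rangle=0,
\]
by the definition of $\Delta$ on $\mathcal{S}'$ as the transpose of $\Delta$ on $\mathcal{S}$.

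Thus $g$ is constant on $(0,T)$ and, by continuity, on $[0,T]$. Since $g(0)=\langle w(0),\psi(0)\rangle=0$ by hypothesis, we get $g(T)=\langle w(T),\phi\rangle=0$. As $\phi\in\mathcal{S}$ was arbitrary this shows $w(T)=0$ in $\mathcal{S}'$, and as $T>0$ was arbitrary we conclude $w\equiv 0$.

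The only mildly technical step is justifying the differentiation of $g$ at interior times, i.e.\ the distributional ``product rule'' for the pairing of a $C^1$ curve in $\mathcal{S}'$ against a $C^1$ curve in $\mathcal{S}$; this is routine and follows from writing the two difference quotients and using the continuity of the bilinear pairing $\mathcal{S}'\times\mathcal{S}\to\C$ together with the convergence of each difference quotient in its respective space. Everything else (that $S(T-t)\phi$ stays in $\mathcal{S}$ and depends smoothly on $t$, that $\Delta$ is self-transpose between $\mathcal{S}$ and $\mathcal{S}'$) is standard.
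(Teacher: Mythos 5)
Your proof is correct, but it takes a genuinely different route from the paper's. The paper argues on the Fourier side: it takes Fourier transforms of $w_t=\Delta w$ to obtain $\widehat{w}_t=-|\xi|^2\widehat{w}$, declares that $e^{t|\xi|^2}\widehat{w}(\cdot,t)$ is therefore constant in $t$, and then uses $\widehat{w}(\cdot,0)=0$ to conclude. You instead run the classical transposition argument on the spatial side, pairing $w(t)$ against the backward heat flow $\psi(t)=S(T-t)\phi$ for $\phi\in\mathcal{S}$ and showing $g(t)=\langle w(t),\psi(t)\rangle$ is constant by the product rule for the $\mathcal{S}'\times\mathcal{S}$ pairing. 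The two arguments are closely related --- pairing $\widehat{w}(t)$ against $e^{t|\xi|^2}\widehat{\phi}$ is the Fourier-side shadow of your pairing of $w(t)$ against $\psi(t)$ --- but yours is arguably the cleaner of the two, since $e^{t|\xi|^2}$ is not a multiplier on $\mathcal{S}'$, so the paper's ``$e^{t|\xi|^2}\widehat{w}(\xi,t)$ is constant'' statement has to be interpreted in $\mathcal{D}'$ rather than $\mathcal{S}'$ (pairing only against compactly supported test functions), a subtlety the paper leaves implicit. Your approach sidesteps this entirely because $\psi(t)$ genuinely stays in $\mathcal{S}$. The only point worth being explicit about, which you acknowledge, is the $C^1$ differentiability of $t\mapsto\psi(t)$ into the Fr\'echet space $\mathcal{S}$ and the resulting product rule for the duality pairing; both are routine verifications on the Fourier side via $\widehat{\psi}(t)=e^{-(T-t)|\xi|^2}\widehat{\phi}$.
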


\proof
For $t\ge 0$ we have $w(t)\in{\mathcal S}'(\R^d))$ and its Fourier transform $\widehat{w}(t)\in {\mathcal S}'(\R^d)$
is defined by $\langle \widehat{w}(t),\phi\rangle=\langle w(t),\widehat{\phi}\rangle$ for all $\phi\in{\mathcal S}(\R^d)$. 
But $t\mapsto w(t)$ is $C^1$ as a map $(0,\infty)\to {\mathcal S}'(\R^d)$, so we can compute
\[
\partial_t\big\langle\widehat w(t),\phi\big\rangle=
\partial_t\big\langle  w(t),\widehat\phi\,\big\rangle=\big\langle\partial_t w(t),\widehat\phi\,\big\rangle=
\big\langle\widehat{\partial_t w}(t),\phi\big\rangle\quad\hbox{for all $\phi\in{\mathcal S}(\R^d)$}
\]
to conclude $\partial_t\widehat w=\widehat{\partial_t w}$ for $t>0$. This enables us to take the Fourier transform 
of $w_t=\Lap w$ to conclude that $\widehat{w}\in{\mathcal S}'(\R^d)$ satisfies $\widehat w_t(\xi,t)=-|\xi|^2\widehat w(\xi,t)$.
This implies that for any test function with compact support $\psi\in C^\infty_c(\R^d)$ and for any $t>0$ we have that
\[
\frac{d}{dt}\big\langle e^{t|\xi|^2}\widehat{w}(\xi,t),\psi(\xi)\big\rangle=0
\]
where $\frac{d}{dt}$ denotes the (pointwise) derivative in $t$.
Hence, $e^{t|\xi|^2}\widehat{w}(\xi,t)$ considered as a distribution in ${\mathcal D}'(\R^d)$
is independent of $t>0$. But since $\widehat{w}\in C([0,\infty],{\mathcal S}'(\R^d))$ we then conclude that
$t\mapsto e^{t|\xi|^2}\widehat{w}(\xi,t)$, $[0,\infty)\to{\mathcal D}'(\R^d)$, is continuous at $t=0$.
Since $\widehat{w}|_{t=0}=0$ we then see that $\widehat{w}(t)=0$ for any $t\ge 0$. 
By applying the inverse Fourier transform in ${\mathcal S}'(\R^d)$ to $\widehat{w}(t)$ we then complete 
the proof of the lemma.
\endproof
  
\begin{proof}[Proof of Theorem \ref{th:SonH}]
First, note that \eqref{eq:Gaussian} extends by duality to $\Sz'(\R^d)$ so that for any $g\in\Sz'(\R^d)$ and 
for any $t\ge 0$, $v(t)\equiv S(t)g$ satisfies 
\[
v\in C([0,\infty),\Sz'(\R^d))\cap C^1((0,\infty),\Sz'(\R^d))
\]
and $v_t=\Delta u$, $v|_{t=0}=g$, in distributional sense (cf. e.g. \cite[Ch. 7]{Shubin}).
Now, take $g\in H^{m,p}_\delta$, $m\in\Z_{\ge 0}$.
From the analytic semigroup theory we know that 
$u=e^{t\Lap}g\in C([0,\infty),H^{m,p}_\delta(\R^d))\cap C^1((0,\infty),H^{m,p}_\delta(\R^d))$ and 
$u_t=\Delta u$, $u|_{t=0}=g$. Since 
\[
C([0,\infty),H^{m,p}_\delta(\R^d))\cap C^1((0,\infty),H^{m,p}_\delta(\R^d))\subseteq
C([0,\infty),{\mathcal S}'(\R^d))\cap C^1((0,\infty),{\mathcal S}'(\R^d))
\] 
we see that $w:=u-v$ satisfies the conditions of Lemma \ref{le:uniqueness} and hence $w\equiv 0$. 
This proves that $S(t)=e^{t\Lap}$ on $H^{m,p}_\delta(\R^d)$. 
Theorem \ref{th:SonH} then follows from Proposition \ref{prop:e^tDelta-H} and 
Proposition \ref{prop:S-estimates} below.
\end{proof}

Finally, we consider the effect of spatial derivatives on $S(t)$. These results are known for $L^p(\R^d)$ and 
$H^{m,p}(\R^d)$, but for the weighted Sobolev spaces $H^{m,p}_\delta(\R^d)$, one encounters the growth factor 
$(1+t)^{|\delta|/2}$, so we give the proofs.

\begin{Prop}\label{prop:S-estimates}
For $1\le p<\infty$, $m\in\Z_{\ge 0}$, $\delta\in\R$, and for any multi-index $|\alpha|\ge 0$ the map 
$z\mapsto\partial^\alpha S(z)$, $S_{\pi/2}\to\mathcal{L}\big(H^{m,p}_\delta\big)$, is well-defined and 
for any $\epsilon\in(0,\pi/2)$ there exists a positive constant $C\equiv C_{p,\delta,\epsilon}>0$ such that for any $m\in\Z_{\ge 0}$,
\begin{equation}\label{eq:DS-estimates}
\big\|\partial^\alpha S(z)\big\|_{\mathcal{L}(H^{m,p}_\delta)}\le\frac{C}{|z|^{|\alpha|/2}}\,\big(1+|z|\big)^{|\delta|/2}
\end{equation}
for all $z\in\bbS_{\pi/2-\epsilon}\setminus\{0\}$.
\end{Prop}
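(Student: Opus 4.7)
The strategy is to exploit the explicit convolution formula $S(z)v = G_z * v$, where $G_z(w) := (4\pi z)^{-d/2}e^{-|w|^2/(4z)}$ is the Gaussian heat kernel, which extends holomorphically in $z\in\bbS_{\pi/2}\setminus\{0\}$. Since constant-coefficient derivatives commute with convolution, one has
\[
\partial^\alpha S(z)v \;=\; (\partial^\alpha G_z) * v,
\]
so the task reduces to bounding the operator norm on $H^{m,p}_\delta$ of convolution against $\partial^\alpha G_z$.

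The first step is a pointwise bound. Writing $\partial_w^\alpha e^{-|w|^2/(4z)} = (4z)^{-|\alpha|/2}H_\alpha\bigl(w/\sqrt{4z}\bigr)e^{-|w|^2/(4z)}$ for a multivariate Hermite-type polynomial $H_\alpha$ of degree $|\alpha|$, and using $\re(1/z) = \cos(\arg z)/|z|\ge (\sin\epsilon)/|z|$ for $|\arg z|\le \pi/2-\epsilon$, one obtains
\[
\bigl|\partial^\alpha G_z(w)\bigr| \;\le\; C_\epsilon\,|z|^{-(d+|\alpha|)/2}\,\bigl(1+|w|/\sqrt{|z|}\bigr)^{|\alpha|}\,e^{-c_\epsilon|w|^2/|z|}
\]
for some $c_\epsilon>0$, uniformly in the closed subsector.

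The second step handles $m=0$. By Lemma \ref{le:J_delta-H}, the $L^p_\delta$-bound on $\partial^\alpha S(z)$ is equivalent to an $L^p$-bound on $J_\delta\,\partial^\alpha S(z)\,J_{-\delta}$, whose integral kernel equals $\x^\delta\y^{-\delta}\partial^\alpha G_z(x-y)$. By Peetre's inequality $\x^\delta\y^{-\delta}\le C\langle x-y\rangle^{|\delta|}$ (Lemma \ref{Le:elementary}), Schur's test applies and reduces matters to estimating
\[
I(z) \;:=\; \int_{\R^d}\langle w\rangle^{|\delta|}\,\bigl|\partial^\alpha G_z(w)\bigr|\,dw.
\]
The pointwise bound together with the rescaling $w=\sqrt{|z|}\,u$ and the elementary inequality $\langle\sqrt{|z|}\,u\rangle\le \sqrt{2}\,(1+\sqrt{|z|})(1+|u|)$ give
\[
I(z) \;\le\; C_\epsilon\,|z|^{-|\alpha|/2}\,(1+|z|)^{|\delta|/2}\int_{\R^d}(1+|u|)^{|\alpha|+|\delta|}e^{-c_\epsilon|u|^2}\,du \;\le\; C_{p,\delta,\epsilon}\,|z|^{-|\alpha|/2}\,(1+|z|)^{|\delta|/2},
\]
which is \eqref{eq:DS-estimates} for $m=0$.

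Finally, for $m\ge 1$ one uses again that constant-coefficient derivatives commute with $S(z)$: for any $f\in H^{m,p}_\delta$ and any multi-index $|\beta|\le m$,
\[
\partial^{\alpha+\beta} S(z)f \;=\; \partial^\alpha S(z)\,\partial^\beta f,\qquad \partial^\beta f\in L^p_\delta.
\]
Applying the $m=0$ bound to $\partial^\beta f$ and summing over $|\beta|\le m$ yields \eqref{eq:DS-estimates} with a constant independent of $m$. The main technical point is the integral estimate for $I(z)$: one must carefully balance the $|z|^{-(d+|\alpha|)/2}$ singularity at $z=0$, the polynomial growth contributed by the Hermite factor, and the polynomial weight $\langle w\rangle^{|\delta|}$, using the rescaling to separate cleanly the $|z|^{-|\alpha|/2}$ smoothing factor from the $(1+|z|)^{|\delta|/2}$ growth factor.
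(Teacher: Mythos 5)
Your argument is correct and follows essentially the same route as the paper: reduce via $J_\delta$ and Peetre's inequality (Lemma~\ref{Le:elementary}) to an $L^1$ bound on the (weighted) derivative of the Gaussian kernel, then commute $\partial^\beta$ with $S(z)$ to pass from $m=0$ to general $m$. The only cosmetic differences are that you treat general $\alpha$ explicitly via a Hermite-polynomial bound on $\partial^\alpha G_z$ (the paper reduces WLOG to $\partial^\alpha=\partial_j$) and you invoke Schur's test where the paper invokes Young's inequality, which for this convolution-type majorant are interchangeable.
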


\begin{proof}
Since the proof of the proposition is similar for any choice $|\alpha|\ge 0$ we will assume 
without loss of generality that $\partial^\alpha=\partial_j$ for some $1\le j\le d$.
Let us first consider the case when $m=0$. Note that for any $1\le j\le d$ the operator 
$\partial_j S(t) : L^p_\delta(\R^d)\to L^p_\delta(\R^d)$ is equivalent to an integral operator 
on $L^p(\R^d)$ with kernel
\[
K_\delta^j(x,y;t)\equiv\x^\delta G_t^j(x-y)\y^{-\delta}\quad\text{where}\quad
G_t^j(x)\equiv-\frac{x_j}{2t(4\pi t)^{d/2}} e^{-|x|^2/4 t}\,.
\]
This formula is well defined for $t$ replaced by $z\in\bbS_{\pi/2}\setminus\{0\}$.
Moreover, for $z\in\bbS_{\pi/2-\epsilon}\setminus\{0\}$ we have from Lemma \ref{Le:elementary} that
\[
\big|K_\delta^j(x,y;z)\big|\le C\,\langle x-y\rangle^{|\delta|}\,|z|^{-d/2}\,\frac{|x-y|}{|z|}\, 
e^{-\frac{|x-y|^2}{4|z|}\sin\epsilon},\quad x,y\in\R^d.
\]
for some constant $C>0$ depending on $\delta>0$. Then, by Young's inequality, 
\[
\|\partial_j S(z)f\|_{L^p}\le\big\|G_{z,\delta,\epsilon}\ast|f|\big\|_{L^p}\le\|G_{z,\delta,\epsilon}\|_{L^1}\|f\|_{L^p}
\]
where
\[
G_{z,\delta,\epsilon}(x):= C\,\x^{|\delta|}\,|z|^{-d/2}\,\frac{|x|}{|z|}\, e^{-\frac{|x|^2}{4|z|}\sin\epsilon},
\quad x\in\R^d,
\]
with a constant $C>0$ depending on $\delta>0$ and $1\le p<\infty$.
This implies that the operator norm of $\partial_j S(z) : L^p_\delta(\R^d)\to L^p_\delta(\R^d)$ is bounded above by 
the $L^1$-norm of $G_{z,\delta,\epsilon}$. By a change of variable to $y=|z|^{-1/2}\,x$ in the integral below, we obtain
\[
\|G_{z,\delta,\epsilon}\|_{L^1}=C\,\int_{\R^d}\x^{|\delta|}\,|z|^{-d/2}\,\frac{|x|}{|z|}\, 
e^{-\frac{|x|^2}{4|z|}\sin\epsilon}\,dx\le\frac{C_{p,\delta,\epsilon}}{\sqrt{|z|}}\,(1+|z|)^{|\delta|/2}
\]
with a constant $C_{p,\delta,\epsilon}>0$ depending on $1\le p<\infty$, $\delta\in\R$ and $\epsilon\in(0,\pi/2)$, and
such that $C_{p,\delta,\epsilon}\to\infty$ as $\epsilon\to 0$. This proves inequality \eqref{eq:DS-estimates} for $m=0$. 

Let us now consider the case when $m\ge 1$. It follows easily by approximation that for any choice of
the multi-indexes $|\alpha|\ge 0$ and $0\le|\beta|\le m$, and for any $f\in H^{m,p}_N(\R^d)$ we have that
$\partial^\beta\big(\partial^\alpha S(z) f\big)=\big(\partial^\alpha S(z)\big)(\partial^\beta f)\in L^p_\delta(\R^d)$
for any $z\in\bbS_{\pi/2}\setminus\{0\}$. Then, by applying \eqref{eq:DS-estimates} in the case when $m=0$, 
we obtain that for any $0\le|\beta|\le m$,
\begin{equation}\label{eq:DS-estimate(m=0)}
\big\|\partial^\beta\big(\partial^\alpha S(z) f\big)\big\|_{L^p_\delta}=
\big\|\big(\partial^\alpha S(z)\big)(\partial^\beta f)\big\|_{L^p_\delta}\le
\frac{C}{|z|^{|\alpha|/2}}\,\big(1+|z|\big)^{|\delta|/2}\big\|\partial^\beta f\big\|_{L^p_\delta},
\end{equation}
for any given $|\alpha|\ge 0$, $f\in H^{m,p}_N(\R^d)$, and $z\in\bbS_{\pi/2}\setminus\{0\}$.
By summing this over $0\le|\beta|\le m$, we obtain \eqref{eq:DS-estimates} with the same constant as
in \eqref{eq:DS-estimate(m=0)}. In particular, the constant in \eqref{eq:DS-estimates} does not depend on $m\ge 0$.
\end{proof}

\begin{Coro}\label{coro:DS-estimate} 
For $1< p<\infty$, $m\in\Z_{\ge 0}$, $\delta\in\R$, the map $z\mapsto S(z)$, 
$S_{\pi/2}\setminus\{0\}\to\mathcal{L}\big(H^{m,p}_\delta,H^{m+1,p}_\delta\big)$, 
is analytic and for any $\epsilon\in(0,\pi/2)$ there exists a positive constant 
$C\equiv C_{p,\delta,\epsilon}>0$ such that for any $m\in\Z_{\ge 0}$,
\begin{equation}\label{eq:DS-estimate}
\big\|S(z)\big\|_{\mathcal{L}(H^{m,p}_\delta,H^{m+1,p}_\delta)}\le
C\,\max\big({|z|}^{-1/2},1\big)\,\big(1+|z|\big)^{|\delta|/2}
\end{equation}
for all $z\in\bbS_{\pi/2-\epsilon}\setminus\{0\}$.
\end{Coro}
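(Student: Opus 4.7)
The plan is to deduce the corollary directly from Proposition \ref{prop:S-estimates} together with Theorem \ref{th:SonH}, by using the definition of the $H^{m+1,p}_\delta$-norm and a composition trick for the analyticity.

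For the norm estimate \eqref{eq:DS-estimate}, fix $z\in\bbS_{\pi/2-\epsilon}\setminus\{0\}$ and $f\in H^{m,p}_\delta(\R^d)$, and split
\[
\|S(z)f\|_{H^{m+1,p}_\delta}=\sum_{|\beta|\le m}\|\x^\delta\partial^\beta S(z)f\|_{L^p}+\sum_{|\beta|=m+1}\|\x^\delta\partial^\beta S(z)f\|_{L^p}.
\]
For the first sum, since the convolution operator $S(z)$ commutes with $\partial^\beta$, each term equals $\|\partial^\beta S(z)f\|_{L^p_\delta}=\|S(z)\partial^\beta f\|_{L^p_\delta}$, which by Proposition \ref{prop:S-estimates} with $|\alpha|=0$ (or directly by Theorem \ref{th:SonH} applied on $L^p_\delta$) is bounded by $C(1+|z|)^{|\delta|/2}\|\partial^\beta f\|_{L^p_\delta}$. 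For the second sum, I write each $\partial^\beta=\partial_j\partial^\gamma$ with $|\gamma|=m$ and apply Proposition \ref{prop:S-estimates} with $|\alpha|=1$ on $L^p_\delta$ to $\partial^\gamma f\in L^p_\delta(\R^d)$, obtaining
\[
\|\partial_j S(z)\partial^\gamma f\|_{L^p_\delta}\le\frac{C}{|z|^{1/2}}(1+|z|)^{|\delta|/2}\|\partial^\gamma f\|_{L^p_\delta}.
\]
Summing over $|\beta|\le m+1$ yields
\[
\|S(z)f\|_{H^{m+1,p}_\delta}\le C\bigl(1+|z|^{-1/2}\bigr)(1+|z|)^{|\delta|/2}\|f\|_{H^{m,p}_\delta},
\]
and since $1+|z|^{-1/2}\le 2\max\bigl(|z|^{-1/2},1\bigr)$, this establishes \eqref{eq:DS-estimate} with a constant independent of $m$.

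For the analyticity of the map $z\mapsto S(z)$ into $\mathcal{L}(H^{m,p}_\delta,H^{m+1,p}_\delta)$, I use a composition trick. Fix $z_0\in\bbS_{\pi/2}\setminus\{0\}$ and choose a neighborhood $U\subset\bbS_{\pi/2}\setminus\{0\}$ of $z_0$ such that $z-z_0/2\in\bbS_{\pi/2-\epsilon'}\setminus\{0\}$ for some $\epsilon'>0$ and all $z\in U$. By the semigroup property,
\[
S(z)=S(z-z_0/2)\circ S(z_0/2)\quad\text{for }z\in U.
\]
By the norm estimate just proved, $S(z_0/2)$ is a fixed bounded operator $H^{m,p}_\delta\to H^{m+1,p}_\delta$, while $z\mapsto S(z-z_0/2)$ is analytic from $U$ into $\mathcal{L}(H^{m+1,p}_\delta)$ by Theorem \ref{th:SonH} applied on $H^{m+1,p}_\delta$. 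The composition is therefore analytic from $U$ into $\mathcal{L}(H^{m,p}_\delta,H^{m+1,p}_\delta)$, and since $z_0$ was arbitrary, the analyticity holds on all of $\bbS_{\pi/2}\setminus\{0\}$.

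The only subtle point—the main obstacle, though a mild one—is that Proposition \ref{prop:S-estimates} alone delivers only boundedness (not analyticity) into the stronger space, so the composition decomposition through $H^{m+1,p}_\delta$ is essential in order to promote the analyticity from the $H^{m+1,p}_\delta$-to-$H^{m+1,p}_\delta$ setting back to the $H^{m,p}_\delta$-to-$H^{m+1,p}_\delta$ setting.
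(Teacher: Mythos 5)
Your proof is correct. The norm estimate is handled essentially as the paper does it, by splitting the $H^{m+1,p}_\delta$-norm into pieces bounded by Proposition \ref{prop:S-estimates} with $|\alpha|=0$ and $|\alpha|=1$; the only small point worth noting (and which you implicitly handle correctly) is that each $\gamma$ with $|\gamma|=m$ arises from at most $d$ multi-indices $\beta$ with $|\beta|=m+1$, so the constant picked up in the second sum depends only on $d$, not on $m$.

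For the analyticity, however, you take a genuinely different route from the paper. The paper's argument proceeds by approximation: it takes $f_k\in C^\infty_c\subset H^{m+1,p}_\delta$ with $f_k\to f$ in $H^{m,p}_\delta$, uses the uniform bound \eqref{eq:DS-estimate} on compact subsets of $\bbS_{\pi/2}\setminus\{0\}$ to conclude $S(z)f_k\to S(z)f$ uniformly in $H^{m+1,p}_\delta$, invokes the fact that each $z\mapsto S(z)f_k$ is analytic (from Proposition \ref{prop:e^tDelta-H} applied in $H^{m+1,p}_\delta$), and then passes from pointwise analyticity to $\mathcal{L}(H^{m,p}_\delta,H^{m+1,p}_\delta)$-valued analyticity via Kato's Theorem 3.12 in Chapter 3. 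Your composition trick $S(z)=S(z-z_0/2)\circ S(z_0/2)$ achieves the same conclusion more directly: $S(z_0/2)\in\mathcal{L}(H^{m,p}_\delta,H^{m+1,p}_\delta)$ is fixed and bounded by the estimate just established, right-composition by a fixed bounded operator is a bounded linear map $\mathcal{L}(H^{m+1,p}_\delta)\to\mathcal{L}(H^{m,p}_\delta,H^{m+1,p}_\delta)$, and post-composing the analytic map $z\mapsto S(z-z_0/2)$ with it preserves analyticity. This bypasses both the density argument and the reference to Kato's theorem, so it is arguably cleaner; the paper's argument has the small advantage of not needing the smoothing estimate as a prerequisite for the analyticity claim, but since the estimate is proved first in either case, this is a non-issue here.
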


\proof
Take $\epsilon\in(0,\pi/2)$. Then, by Propositions \ref{prop:S-estimates}, for any $z\in\bbS_{\pi/2-\epsilon}\setminus\{0\}$
and for any $f\in H^{m,p}_\delta(\R^d)$ we have that $S(z)f\in H^{m+1,p}_\delta$ and for some constant $C>0$,
\[
\begin{aligned}
\|S(z)f\|_{H^{m+1,p}_\delta}  &\le\|\nabla S(z)f\|_{H^{m,p}_\delta} + \|S(z)f\|_{H^{m,p}_{\delta}} \\
&\le C\,|z|^{-1/2} (1+|z|)^{|\delta|/2}\,\|f\|_{H^{m,p}_{\delta}} + C\,(1+|z|)^{|\delta|/2}\,\|f\|_{H^{m,p}_{\delta}},
\end{aligned}
\]
from which \eqref{eq:DS-estimate} easily follows.

Now we prove the analyticity of the map 
$z\mapsto S(z)$, $S_{\pi/2}\setminus\{0\}\to\mathcal{L}\big(H^{m,p}_\delta,H^{m+1,p}_\delta\big)$. 
Let $K\subseteq\C$ be a closed disk in $\bbS_{\pi/2}\setminus\{0\}$ and take an arbitrary $f\in H^{m,p}_\delta(\R^d)$. 
We first claim that $z\mapsto S(z)f$ is an analytic map $\intr{K}\to H^{m+1,p}_\delta(\R^d)$, where $\intr{K}$ denotes 
the interior of $K$. In fact,  let $(f_k)_{k\ge 1}$ be a sequence in $C^\infty_c(\R^d)\subseteq H^{m+1,p}_\delta(\R^d)$ such 
that $f_k\to f$ in $H^{m,p}_\delta(\R^d)$ as $k\to\infty$. By  \eqref{eq:DS-estimate}, there exists $C>0$ such that 
for any $z\in K$ we have $\|S(z)\|_{\mathcal{L}(H^{m,p}_\delta,H^{m+1,p}_\delta)}\le C$ which implies that
\[
\big\|S(z)f-S(z)f_k\big\|_{H^{m+1,p}_\delta}\le C\,\|f-f_k\|_{H^{m,p}_\delta}
\]
uniformly in $z\in K$ and $k\ge 1$. Hence, $S(z)f_k\to S(z)f$ in $H^{m+1,p}_\delta(\R^d)$ as $k\to\infty$, 
uniformly in $z\in K$. On the other hand, from Proposition \ref{prop:e^tDelta-H} and the fact that for each $k\ge 1$,
$f_k\in C^\infty_c(\R^d)\subseteq H^{m+1,p}_\delta(\R^d)$ we obtain that $z\mapsto S(z)f_k$, 
$\intr{K}\to H^{m+1,p}_\delta(\R^d)$, is analytic. Using that the uniform limit of analytic maps is analytic we then
conclude that $z\mapsto S(z)f$, $\intr{K}\to H^{m+1,p}_\delta(\R^d)$, is analytic. 
Since for any $z\in S_{\pi/2}\setminus\{0\}$, $S(z)\in\mathcal{L}\big(H^{m,p}_\delta,H^{m+1,p}_\delta\big)$,
we then obtain from \cite[Theorem 3.12 in Chapter 3]{Kato} that $z\mapsto S(z)$, 
$S_{\pi/2}\setminus\{0\}\to\mathcal{L}\big(H^{m,p}_\delta,H^{m+1,p}_\delta\big)$, is analytic. 
\endproof

\section{The semigroup on $\AH^{m,p}_N$}\label{sec:asymptotic_spaces}
In this section we assume $1<p<\infty$ and $m\in\N$ with  $m>d/p$; consequently, if $v\in\AH^{m,p}_N$ then 
$v\in CB(\R^d)$ by Proposition \ref{pr:A-properties}(d) and $S(t)v$ may be defined for $t>0$ by \eqref{eq:Gaussian}. 
We will show that $\{S(t)\}_{t\ge 0}$ extends to an analytic semigroup of angle $\pi/2$ on $\AH^{m,p}_N$.

\begin{Th}\label{th:SonAH} 
For $1<p<\infty$ and for any $m, N\in\Z_{\ge 0}$ with $m>d/p$, formula \eqref{eq:Gaussian} defines a strongly continuous 
semigroup $\{S(t)\}_{t\ge 0}$ on $\AH_N^{m,p}$ satisfying \eqref{est:S(t)onAH}. 
The generator is an operator $\Lambda$ with domain $D$ containing $\AH^{m+2,p}_N$ on which $\Lambda v=\Lap v$.
Moreover, $\{S(t)\}_{t\ge 0}$ is analytic of angle $\pi/2$ such that for any $\epsilon\in(0,\pi/2)$ there exists 
a positive constant $C\equiv C_{p,\delta,\epsilon,N}>0$ such that for any $m\in\Z_{\ge 0}$,
\begin{equation}\label{eq:S-estimate on AH}
\|S(z)\|_{\mathcal{L}(\AH^{m,p}_N)}\le C\,\big(1+|z|\big)^{\mu},\quad\mu\equiv\big(N+N^*+2\big)/2,
\end{equation}
for all $z\in\bbS_{\pi/2-\epsilon}\setminus\{0\}$.
Finally, if $N\ge 2$ and $v\in\AH_N^{m,p}$ then $S(z)v-v\in\AH_{2,N}^{m,p}$ for all $z\in {\bbS}_{\pi/2}$.
\end{Th}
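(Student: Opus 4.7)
The strategy is to reduce the problem on $\AH^{m,p}_N$ to the weighted-Sobolev result of Theorem \ref{th:SonH} by constructing an explicit asymptotic ansatz for $S(z)b$ when $b$ is a pure asymptotic function, and then combining it with the action of $S(z)$ on the $H^{m,p}_N$-remainder of $v$.

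First I would decompose $v = b + f$ with $b = \chi(r)\sum_{k=0}^{N^*} b_k(\theta)/r^k$ and $f \in H^{m,p}_N(\R^d)$; the action $S(z)f$ is already governed by Theorem \ref{th:SonH}. A direct computation in polar coordinates gives $\Delta\bigl(a(\theta)\,r^{-k}\bigr) = r^{-(k+2)}\,M_{k+2}\,a(\theta)$, where $M_j := \Delta_{\s^{d-1}} + (j-2)(j-d)$. This motivates defining candidate asymptotic coefficients $a_k(\theta, z)$ of $S(z)b$ as the solutions of the triangular system $\partial_z a_k = M_k a_{k-2}$ on $\s^{d-1}$ with initial conditions $a_k|_{z=0} = b_k$, $0 \le k \le N^*$ (and $a_{-1} = a_{-2} := 0$). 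These will be explicit polynomials in $z$ of degree at most $\lfloor k/2 \rfloor$ with values in Sobolev spaces on $\s^{d-1}$; the decreasing regularity assumption $b_k \in H^{m+1+N^*-k,p}(\s^{d-1})$ is arranged precisely so that each application of $M$ (a second-order spherical operator) costs two derivatives and every $a_k(\cdot, z)$ remains in $H^{m+1+N^*-k,p}(\s^{d-1})$, with norm growing polynomially in $|z|$.

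Next I would set $B(x, z) := \chi(r)\sum_{k=0}^{N^*} a_k(\theta, z)/r^k$ and compute $(\partial_z - \Delta) B$ termwise; the indices $2 \le j \le N^*$ cancel by construction of the ODE, leaving a residual $R(x, z)$ consisting of (i) two overflow terms of orders $r^{-(N^*+1)}$ and $r^{-(N^*+2)}$, and (ii) compactly supported terms where derivatives fall on $\chi$. Since $\chi(r)/r^{N^*+1} \in H^{m,p}_N$ by the choice of $N^*$ in \eqref{def:N*}, a regularity count shows $R(\cdot, z) \in H^{m,p}_N(\R^d)$ (up to losing at most one spherical derivative, which is tolerated by the smoothing estimate of Corollary \ref{coro:DS-estimate}), with norm growing polynomially in $|z|$. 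Defining
\[
G(x, z) := \int_0^z S(z - \zeta)\,R(\cdot, \zeta)\, d\zeta
\]
by integration along a radial path in $\bbS_{\pi/2}$, Theorem \ref{th:SonH} guarantees $G(\cdot, z) \in H^{m,p}_N(\R^d)$, and Duhamel's formula yields $\partial_z G = \Delta G + R$. Hence $(B + G)|_{z=0} = b$ and $(\partial_z - \Delta)(B + G) = 0$, so Lemma \ref{le:uniqueness}, applied on the real axis and extended to $\bbS_{\pi/2}$ by analyticity, identifies $S(z) b = B(x,z) + G(x,z)$. Therefore $S(z) v = B(x,z) + G(x,z) + S(z) f \in \AH^{m,p}_N$, with the asymptotic coefficients of $S(z)v$ read off as the $a_k(\theta, z)$.

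The estimate \eqref{eq:S-estimate on AH} then follows by summing contributions: the asymptotic part $B$ contributes $(1 + |z|)^{\lfloor N^*/2 \rfloor}$ in $\AH^{m,p}_N$-norm, and combining the polynomial growth of $R$ with Theorem \ref{th:SonH} shows the Duhamel term contributes at most $(1 + |z|)^{N/2 + 1 + \lfloor N^*/2 \rfloor}$, both dominated by $(1+|z|)^\mu$ with $\mu = (N+N^*+2)/2$. Strong continuity at $z=0$ and the semigroup property are inherited from the corresponding properties on $H^{m,p}_N$ together with the polynomial-in-$z$ form of $B$. Analyticity of $z \mapsto S(z)$ on $\bbS_{\pi/2}\setminus\{0\} \to \mathcal{L}(\AH^{m,p}_N)$ follows because $B$ is entire in $z$ and $G$ is an integral of analytic integrands. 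To identify the generator, for $v \in \AH^{m+2,p}_N$ I would differentiate the decomposition at $z=0$ termwise and verify that $(S(z)v - v)/z \to \Delta v$ in $\AH^{m,p}_N$. Finally, the last assertion is immediate from the ODE: $a_0(\theta, z) = b_0(\theta)$ and $a_1(\theta, z) = b_1(\theta)$ for every $z$, so when $N \ge 2$ the order-$0$ and order-$1$ coefficients of $S(z)v - v$ vanish while its remainder $G + (S(z)f - f)$ lies in $H^{m,p}_N$, giving $S(z) v - v \in \AH^{m,p}_{2, N}$. The hardest step will be the regularity bookkeeping for $R$, which is precisely what the decreasing-regularity convention on the spherical coefficients is tailored to accommodate.
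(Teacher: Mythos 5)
Your proposal is correct and follows essentially the same route as the paper's own argument: decompose $v$ into asymptotic part plus $H^{m,p}_N$-remainder, derive the triangular system of spherical ODEs $\dot a_k = \Delta_{\s^{d-1}} a_{k-2} + (k-2)(k-d)\,a_{k-2}$ (your $M_k a_{k-2}$) with $a_0,a_1$ frozen, collect the overflow and $\chi$-cutoff residuals into a forcing term, represent the remainder via Duhamel using the $H^{m,p}_\delta$ semigroup, invoke the tempered-distribution uniqueness lemma to identify the ansatz with $S(t)v$, and extend to $\bbS_{\pi/2}$ by piecewise analyticity of the three summands. The one subtle point you flag — the one-derivative loss on the order-$r^{-(N^*+2)}$ residual, recovered through the smoothing bound $\|S(z)\|_{\mathcal L(H^{m-1,p}_N,H^{m,p}_N)}\lesssim \max(|z|^{-1/2},1)(1+|z|)^{N/2}$ — is exactly how the paper handles its $h_1$ term, and your exponent count for the Duhamel contribution is dominated by $\mu=(N+N^*+2)/2$ as required.
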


\begin{Rem}\label{Rem:NotClosed} 
Comparing with Theorem \ref{th:SonH}, one might expect that the generator of $S(t)$ on $\AH_N^{m,p}$ is $\Lap$
with domain $D=\AH_N^{m+2,p}$. However, the unbounded operator  $\Lap$ with domain $D=\AH_N^{m+2,p}$ is  
not closed on $\AH_N^{m,p}$, so cannot be the generator of $S(t)$. This is further discussed in Appendix \ref{sec:aux-lemmas}.
\end{Rem}

\begin{Rem}\label{Rem:n>0} 
For $0<n\le N$, we can consider $S(z)$ as an analytic semigroup of angle $\pi/2$ on $\AH_{n,N}^{m,p}$. All results in this 
section apply to this more general setting.
\end{Rem}

\begin{Rem}\label{Rem:S(z)v-v interp} 
The statement $S(z)v-v\in \AH_{2,N}^{m,p}$ means that the asymptotic coefficients $a_0(z)$ and $a_1(z)$ of
the solution $u(z)\equiv S(z)v$ remain invariant under the heat flow. Similarly, if we consider $S(z)$ as an analytic semigroup on 
$\AH_{n,N}^{m,p}$ and $N\ge n+2$, then $S(z)v-v\in \AH^{m,p}_{n+2,N}$, so the coefficients
$a_n(z)$ and $a_{n+1}(z)$ of the solution remain invariant.
\end{Rem}

For simplicity, we first tackle part of the proof of Theorem \ref{th:SonAH} in the following

\begin{Prop}\label{pr:SonAH} 
For $1<p<\infty$ and for any $m, N\in\Z_{\ge 0}$ with $m>d/p$, $\{S(t)\}_{t\ge 0}$ is a strongly continuous semigroup on 
$\AH_N^{m,p}$. Moreover, the generator is an operator $\Lambda$  with domain $D$ that contains $\AH_N^{m+2,p}$ and 
$\Lambda v=\Delta v$ for $v\in \AH_N^{m+2,p}$.
\end{Prop}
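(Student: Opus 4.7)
My plan is to reduce the semigroup theory on $\AH^{m,p}_N$ to the weighted-space result Theorem \ref{th:SonH} by introducing an explicit approximate asymptotic solution. Given $v \in \AH^{m,p}_N$, write $v = b + f$ with $b(x) = \chi(r)\sum_{k=0}^{N^*} b_k(\theta)/r^k$ the asymptotic function and $f \in H^{m,p}_N$. The contribution $S(t)f$ is handled immediately by Theorem \ref{th:SonH} as a strongly continuous curve in $H^{m,p}_N \hookrightarrow \AH^{m,p}_N$. The core task is to show that $S(t)b$ itself lies in $\AH^{m,p}_N$ with the asymptotic coefficients identified explicitly and an $H^{m,p}_N$ remainder.

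To locate the asymptotic coefficients of $S(t)b$, I would substitute the ansatz
\[
U(x,t) = \chi(r)\sum_{k=0}^{N^*}\frac{a_k(\theta,t)}{r^k}
\]
into $u_t=\Delta u$, using the polar-coordinate identity $\Delta(h(\theta)/r^k) = \bigl[k(k+2-d)h(\theta)+\Delta_{\s^{d-1}}h(\theta)\bigr]/r^{k+2}$ on $\{\chi\equiv 1\}$, and match powers of $1/r$. This produces the triangular cascade
\[
\partial_t a_0=\partial_t a_1=0,\qquad
\partial_t a_k=(k-2)(k-d)\,a_{k-2}+\Delta_{\s^{d-1}} a_{k-2}\quad(k\ge 2),
\]
with initial data $a_k(\theta,0)=b_k(\theta)$. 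Each $a_k(\cdot,t)$ is then determined by direct time integration from $a_{k-2}$; since $\Delta_{\s^{d-1}}$ costs two Sobolev degrees and the regularity demanded of $a_k$ in $\AH^{m,p}_N$ is precisely two units less than that demanded of $b_{k-2}$, the chain respects the hierarchy $a_k(\cdot,t)\in H^{m+1+N^*-k,p}(\s^{d-1})$ with polynomial-in-$t$ norm bound. In particular $a_0(\theta,t)=b_0(\theta)$ and $a_1(\theta,t)=b_1(\theta)$ are time-invariant, consistent with the last assertion of Theorem \ref{th:main}.

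Next I would compare $S(t)b$ with $U(t)$ via Duhamel. By construction the residual $R:=U_t-\Delta U$ consists of (i) terms supported on $\{1\le r\le 2\}$ coming from $\chi'$ and $\chi''$, which are compactly supported and thus lie in $H^{m,p}_N$, and (ii) the uncancelled asymptotic tails at orders $1/r^{N^*+1}$ and $1/r^{N^*+2}$, which belong to $H^{m,p}_N$ by the defining inequality \eqref{def:N*} of $N^*$. Hence $w(t):=S(t)b-U(t)$ satisfies $w_t=\Delta w-R$ with $w(0)=0$ in the distributional sense; the uniqueness Lemma \ref{le:uniqueness} together with Duhamel on $H^{m,p}_N$ (Theorem \ref{th:SonH}) forces $w(t)=-\int_0^t S(t-s)R(s)\,ds\in H^{m,p}_N$ with a controlled norm. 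Adding $S(t)f$ yields $S(t)v=U(t)+\bigl[w(t)+S(t)f\bigr]\in \AH^{m,p}_N$, the qualitative norm bound, and strong continuity of $t\mapsto S(t)v$ at $t=0^+$.

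The semigroup identity $S(t+s)=S(t)S(s)$ is inherited from its validity on $\Sz'(\R^d)$ together with the membership $S(t)v\in\AH^{m,p}_N$. For the generator claim, one first checks by the asymptotic-plus-cutoff decomposition that $\Delta$ maps $\AH^{m+2,p}_N$ into $\AH^{m,p}_N$; then the identity
\[
\frac{S(t)v-v}{t}=\frac{1}{t}\int_0^t S(s)\Delta v\,ds,
\]
which holds in $\Sz'$ and lifts to $\AH^{m,p}_N$ since both sides now live there, combines with the strong continuity of $S(s)$ on $\AH^{m,p}_N$ applied to $\Delta v\in\AH^{m,p}_N$ to give $\Lambda v=\Delta v$, so $\AH^{m+2,p}_N\subseteq D(\Lambda)$. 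The main obstacle I anticipate is the second paragraph: carefully bookkeeping the spherical Sobolev regularity of the cascade coefficients $a_k(\cdot,t)$ uniformly in $t$, and verifying that the truncation-cum-cutoff residual $R$ sits in $H^{m,p}_N$ (and not merely in some larger weighted space) so that Duhamel closes inside $\AH^{m,p}_N$.
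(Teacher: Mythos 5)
Your construction is essentially the one the paper uses: the same polar-coordinate cascade $\dot a_k=\Delta_\theta a_{k-2}+(k-2)(k-d)a_{k-2}$, the same explicit ansatz $U(t)=\chi(r)\sum a_k(\theta,t)/r^k$, Duhamel for the remainder, and the uniqueness Lemma~\ref{le:uniqueness} to identify the result with $S(t)v$; the reorganization into $S(t)v=S(t)b+S(t)f$ and the observation that $\Lambda v=\Delta v$ can be read off from $S(t)v-v=\int_0^t S(s)\Delta v\,ds$ lifted from $\Sz'$ are cosmetic variants of the paper's computation.

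There is, however, a genuine gap at exactly the spot you flagged as the main obstacle. You claim the uncancelled tails of $R=U_t-\Delta U$ at orders $1/r^{N^*+1}$ and $1/r^{N^*+2}$ lie in $H^{m,p}_N$ ``by the defining inequality \eqref{def:N*} of $N^*$.'' That inequality controls only the \emph{radial} decay (it guarantees $\chi(r)/r^{N^*+1}\in H^{m,p}_N$); it says nothing about the \emph{angular} regularity. For $v\in\AH^{m,p}_N$ one only has $a_{N^*}(\cdot,t)\in H^{m+1,p}(\s^{d-1})$, hence $\Delta_\theta a_{N^*}\in H^{m-1,p}(\s^{d-1})$, and the term
\[
\chi(r)\,\frac{\Delta_\theta a_{N^*}+N^*(N^*+2-d)a_{N^*}}{r^{N^*+2}}
\]
is only in $H^{m-1,p}_N$, not $H^{m,p}_N$. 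Consequently $R$ itself is merely in $H^{m-1,p}_N$, and your Duhamel integral $\int_0^t S(t-s)R(s)\,ds$ does not trivially land in $H^{m,p}_N$. The missing ingredient is the parabolic smoothing estimate for $S$ on weighted spaces, Corollary~\ref{coro:DS-estimate}: $\|S(\tau)\|_{\mathcal L(H^{m-1,p}_N,H^{m,p}_N)}\le C\max(\tau^{-1/2},1)(1+\tau)^{N/2}$. Splitting $R$ into its $H^{m,p}_N$ part and the lone $H^{m-1,p}_N$ tail, the factor $\tau^{-1/2}$ is integrable near $\tau=0$ and restores the lost derivative, so the Duhamel integral does converge in $H^{m,p}_N$. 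This one-derivative recovery via analytic smoothing is precisely what the paper supplies (cf.\ its $h=h_1+h_2$ decomposition and the estimate \eqref{eq:S(t)h1}); without invoking it your argument cannot close inside $\AH^{m,p}_N$ at the stated regularity, and it is not a consequence of the $N^*$ condition alone.
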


\begin{proof}[Proof of Proposition \ref{pr:SonAH}]
Let $v\in \AH_N^{m,p}$ and consider $w(t):=S(t)v$. Since $\AH_N^{m,p}$ is contained in $\Sz'(\R^d)$ we have that
$w\in C([0,\infty),\Sz'(\R^d))\cap C^1((0,\infty),\Sz'(\R^d))$ and $w_t=\Delta w$, $w|_{t=0}=v$ (see e.g. \cite{Shubin}). 
On the other hand, by an ansatz, we will construct a solution $u$ of the heat equation $u_t=\Lap u$, $u|_{t=0}=v$; 
as we see below, the solution found this way is in $C([0,\infty),\AH_N^{m,p})\cap C^1((0,\infty),\Sz'(\R^d))$. 
Then, by Lemma \ref{le:uniqueness}, $u(t)=S(t)v$ for $t\ge 0$ and the theorem will follow. 

To give the details, let us write $v\in \AH_N^{m,p}$ as
\begin{equation}\label{def:v}
v(x)=b(x)+g(x)=\chi(r)\left(b_0(\theta)+\cdots+\frac{b_{N^*}(\theta)}{r^{N^*}}\right)+g(x),
\end{equation}
where $b_k\in H^{m+1+N^*-k}(S^{d-1})$, $0\le k\le N^*$, and $g\in H^{m,p}_{N}(\R^d)$.
We seek a solution $u$ of the form
\begin{equation}\label{eq:u-asymptotics}
u(x,t):=a(x,t)+f(x,t)=\chi(r)\left(a_0(\theta,t)+\cdots+\frac{a_{N^*}(\theta,t)}{r^{N^*}}\right)+f(x,t)
\end{equation}
with initial conditions $a_k(\theta,0)=b_k(\theta)$, $0\le k\le N^*$, and $f(x,0)=g(x)$, and such that
\begin{eqnarray*}
&f\in C([0,\infty),L^p_N(\R^d))\cap C^1((0,\infty),L^p_N(\R^d)),\\
&a_k\in C([0,\infty),H^{2,p}(S^{d-1}))\cap C^1((0,\infty),L^p(S^{d-1})),\,\,0\le k\le N^*.
\end{eqnarray*}
If we apply the Laplacian to $u$ in \eqref{eq:u-asymptotics} we obtain
\begin{equation}\label{eq:Lap(u)}
\Lap u=\chi(r)\left(\frac{\Lap_\theta a_0}{r^2}+
\cdots+\frac{\Lap_\theta a_{N^*}+N^*(N^*+2-d)a_{N^*}}{r^{N^*+2}}\right) 
+2\,\nabla\chi\cdot\nabla \tilde a +(\Lap\chi)\tilde a+\Lap f,
\end{equation}
where $\Lap_\theta$ denotes the Laplacian on $S^{d-1}$; here and throughout the rest of this proof we set
\begin{equation}\label{def:tilde-a}
\tilde{a}(t):=a_0(t)+\cdots+\frac{a_{N^*}(t)}{r^{N^*}}
\end{equation}
which implies that $a=\chi\,\tilde{a}$.
We also have
\begin{equation}\label{eq:u_dot}
\dt{u}(t)=\chi(r)\left(\dt{a}_0(t)+\cdots+\frac{\dt{a}_{N^*}(t)}{r^{N^*}}\right)+\dt{f}(t)
\end{equation}
where dot denotes the $t$-derivative.
It now follows from \eqref{def:v}, \eqref{eq:Lap(u)}, and \eqref{eq:u_dot}, that \eqref{eq:u-asymptotics} satisfies the heat equation
$u_t=\Delta u$, $u|_{t=0}=v$, if $a$ and $f$ satisfy the following system of equations:
\begin{subequations}\label{eq:ODEs}
\begin{equation}\label{eq:ODEs-1}
\dt{a}_0(t)=\dt{a}_1(t)=0
\end{equation}
\begin{equation}\label{eq:ODEs-2}
\dt{a}_k(t)=\Lap_\theta a_{k-2}(t) +(k-2)(k-d) a_{k-2}(t)\quad\hbox{for $k=2,\dots,N^*$}
\end{equation}
and
\begin{equation}\label{eq:ODEs-4}
\dt{f}(t)=\Lap f(t)+h(t),
\end{equation}
where
\begin{equation}\label{eq:ODEs-5}
\begin{aligned}
h(t):=&\chi\left(\frac{\Lap_\theta a_{N^*-1}(t)+(N^*-1)(N^*+1-d)\,a_{N^*-1}(t)}{r^{N^*+1}}\right) \\
&+\chi\left(\frac{\Lap_\theta a_{N^*}(t)+N^*(N^*+2-d)\,a_{N^*}(t)}{r^{N^*+2}}\right)+
\,2\nabla\chi\cdot\nabla{\tilde a}(t)+(\Lap\chi){\tilde a}(t),
\end{aligned}
\end{equation}
with initial conditions
\begin{equation}\label{eq:ODEs-6}
a_k(0)=b_k\ \hbox{for $k=0,\dots,N^*$},
\end{equation}
\begin{equation}\label{eq:ODEs-7}
f(0)=g.
\end{equation}
\end{subequations}
From \eqref{eq:ODEs-1} we find that $a_0$ and $a_1$ are independent of $t$:
\begin{equation}\label{eq:ODEs-solution-a}
a_k(t)=b_k\quad\hbox{for $k=0,1$.}
\end{equation}
To find $a_k(t)$ for $k=2,\dots, N^*$, we can iteratively integrate \eqref{eq:ODEs-1}, \eqref{eq:ODEs-2} with initial 
condition \eqref{eq:ODEs-6}, since $a_{k-2}(t)$ has been previously determined. We write this as
\[
a_k(t)=b_k+\int_0^t\Big[\Lap_\theta a_{k-2}(s)+(k-2)(k-d)a_{k-2}(s)\Big]\,ds
\quad\hbox{for $k=2,\dots,N^*$}
\]
which implies
\begin{equation}
\begin{aligned}\label{eq:ak=...}
a_2(t)&=t\,\Lap_\theta b_0+b_2,\\
a_3(t)&=t\left[\Lap_\theta b_1+(3-d)b_1\right]+b_3,\\
a_4(t)&=\frac{t^2}{2}\left[\Lap^2_\theta b_0+2(4-d)\Lap_\theta b_0\right]+t\left[\Lap_\theta b_2+2(4-d)b_2\right]+b_4,\\
&\dots
\end{aligned}
\end{equation}
It follows from \eqref{eq:ak=...} that each $a_k(t)$ is a polynomial in $t$ of degree $\le k/2$ with coefficients in 
$H^{m+1+N^*-k,p}(S^{d-1})$. In particular, $a_k$ belongs to $C^\infty([0,\infty),H^{m+1+N^*-k,p}(S^{d-1}))$.
To find $f$ from \eqref{eq:ODEs-4}, we use Duhamel's formula and set
\begin{equation}\label{eq:f=}
f(t):=S(t)g+\int_0^t S(t-s)h(s)\,ds.
\end{equation}
It follows from  \eqref{def:tilde-a}, \eqref{eq:ODEs-5}, and \eqref{eq:ak=...}, that $h\in C^1([0,\infty),L^p_N(\R^d))$.
Since $g\in H^{m,p}_N(\R^d)$ we then conclude from \eqref{eq:f=}, Theorem \ref{th:SonH}, and \cite[Corollary 3.3, Ch. 4]{Pazy}, that 
\[
f\in C([0,\infty),L^p_N(\R^d))\cap C^1((0,\infty),L^p_N(\R^d))
\]
which implies that $u$ given by \eqref{eq:u-asymptotics} belongs to $C([0,\infty),\Sz'(\R^d))\cap C^1((0,\infty),\Sz'(\R^d))$ and,
by construction, it satisfies $u_t=\Delta u$, $u|_{t=0}=v$. Then, by Lemma \ref{le:uniqueness}, 
\begin{equation}\label{eq:u=Sv}
u(t)\equiv S(t)v\,.
\end{equation}
Our next task is to show that $u\in C([0,\infty),\AH^{m,p}_N)$.
To this end, note that by \eqref{eq:ak=...} we have that for any $t\in [0,\infty)$,
\[
\|a_k(t)\|_{H^{m+1+N^*-k,p}}\le
C\,(1+t)^{[k/2]}\sum_{j=0}^k \|b_j\|_{H^{m+1+N^*-j,p}} 
\]
with a constant $C>0$ independent of $t$ and where $[\cdot]$ denotes the integer part of a real number. 
Summing over $k=0,\dots,N^*$ we obtain
\begin{equation}\label{est:ODEsoln}
\sum_{k=0}^{N^*}\|a_k(t)\|_{H^{m+1+N^*-k,p}}\le C\,(1+t)^{N^*/2}\,\sum_{k=0}^{N^*}\|b_k\|_{H^{m+1+N^*-k,p}}
\end{equation}
with a constant $C>0$ independent of $t$. 

We want to show that $f$ is continuous as a map $[0,\infty)\to H_{N}^{m,p}(\R^d)$ with $f(0)=g$.
Since $g\in H_{N}^{m,p}(\R^d)$, by Proposition \ref{th:SonH} we know that $S(\cdot)g\in C([0,\infty),H_{N}^{m,p}(\R^d))$
and $S(0)g=g$. Now we consider the convolution term 
\[
W(t):=\int_0^t S(t-s)h(s)\,ds
\]
in \eqref{eq:f=}. Note that by \eqref{eq:ODEs-5} and \eqref{eq:ak=...}, $h(t)=h_1(t)+h_2(t)$ where 
\[
\begin{aligned}
&h_1:=\chi\frac{\Lap_\theta a_{N^*}+N^*(N^*+2-d)\,a_{N^*}}{r^{N^*+2}}\in 
C^\infty([0,\infty),H_{N}^{m-1,p}(\R^d)), \ \hbox{and}\\
&h_2:=\chi\,\frac{\Lap_\theta a_{N^*-1}+(N^*-1)(N^*+1-d)\,a_{N^*-1}}{r^{N^*+1}}+
\,2\nabla\chi\cdot\nabla \tilde a+(\Lap \chi)\tilde a
\in C^\infty([0,\infty),H^{m,p}_N(\R^d)).
\end{aligned}
\]
(The  loss of one derivative in $h_1$ comes from $\Lap_\theta a_{N^*}\in H^{m-1,p}(\s^{d-1})$; $\tilde a$ in $h_2$ refers 
to \eqref{def:tilde-a}.)
Using \eqref{est:ODEsoln}, we have 
\[
\begin{aligned}
&\|h_1(t)\|_{H_N^{m-1,p}}\le C\,\|a_{N^*}(t)\|_{H^{m+1,p}}\le 
C\,(1+t)^{N^*/2}\sum_{k=0}^{N^*} \|b_k\|_{H^{m+1+N^*-k,p}},\\
&\|h_2(t)\|_{H_N^{m,p}}\le C\,\sum_{k=0}^{N^*}\|a_{k}(t)\|_{H^{m+1+N^*-k,p}}\le 
C\,(1+t)^{N^*/2}\sum_{k=0}^{N^*} \|b_k\|_{H^{m+1+N^*-k,p}},
\end{aligned}
\]
for some constant $C>0$ independent of $t$. 
By using $\sum_{k=0}^{N^*}\|b_k\|_{H^{m+1+N^*-k,p}}\le \|v\|_{\AH_N^{m,p}}$ we see that
\begin{equation}\label{eq:h-estimate}
\|h(t)\|_{H_N^{m-1,p}}\le C\,(1+t)^{N^*/2}\|v\|_{\AH^{m,p}_N}
\end{equation}
for some constant $C>0$ independent of $t$. From \eqref{eq:DS-estimate} we then obtain that for $0<s<t$,
\begin{equation}\label{eq:S(t)h1}
\begin{aligned}
\|S(s)h_1(t-s)\|_{H^{m,p}_{N}}
\le C\,\max\big(s^{-1/2},1\big)\,(1+s)^{N/2}\,\big(1+(t-s)\big)^{N^*/2}\|v\|_{\AH_N^{m,p}}.
\end{aligned}
\end{equation}
This, together with Corollary \ref{coro:DS-estimate} below, applied with $z\equiv t\in[0,\infty)$, 
then implies that
\[
W(t)=\int_0^t S(s)h(t-s)\,ds\in H_{N}^{m,p}(\R^d),
\] 
$W(t)\to 0$ in $H_{N}^{m,p}(\R^d)$ as $t\to 0$, and $W\in C([0,\infty),H^{m,p}_N(\R^d))$.
In particular, we see from \eqref{eq:f=} that $f\in C([0,\infty),H^{m,p}_{N}(\R^d))$ and $f(0)=g$.
It now follows from \eqref{eq:u-asymptotics} and \eqref{eq:f=} that
\[
u\in C([0,\infty),\AH_N^{m,p}),\quad u(0)=v\,.
\] 
In view of \eqref{eq:u=Sv}, we then obtain that $S(\cdot)v\in C([0,\infty),\AH_N^{m,p})$ for any $v\in \AH_N^{m,p}$ and that
\begin{equation}\label{S(t)v=}
S(t)v=\chi\left(a_0+\frac{a_1}{r}+\frac{a_2(t)}{r^2}+\cdots+\frac{a_{N^*}(t)}{r^{N^*}}\right)
+S(t)g+\int_0^t S(t-s)h(s)\,ds,
\end{equation}
where the $a_k$ and $h$ are given by \eqref{eq:ODEs-solution-a}, \eqref{eq:ak=...}, and \eqref{eq:ODEs-5}.
The semigroup property $S(t_1+t_2)v=S(t_1)S(t_2)v$, $t_1,t_2\ge 0$, is inherited from $\{S(t)\}_{t\ge 0}$ on $\Sz'(\R^d)$. 
Hence, $\{S(t)\}_{t\ge 0}$ is a strongly continuous semigroup on $\AH_N^{m,p}$.

Lastly, we consider the generator $\Lambda$ of the semigroup $\{S(t)\}_{t\ge 0}$,
\begin{equation}\label{def:Lambda}
\Lambda v:=\lim_{t\to 0+0}\frac{S(t)v-v}{t}
\end{equation}
with domain $D(\Lambda):=\big\{v\in\AH^{m,p}_N : \text{the limit \eqref{def:Lambda} exists in}\,\,\AH^{m,p}_N\big\}$. 
If $v=b+g\in\AH_N^{m+2,p}$ as in \eqref{def:v}, we want to show that $v\in D$ and $\Lambda v=\Lap v$.
We can use \eqref{S(t)v=} and \eqref{eq:ak=...} to calculate
\[
\begin{aligned}
\frac{S(t)v-v}{t}=\,&\chi(r)\left(\frac{\Lap_\theta b_0}{r^2}+\frac{\Lap_\theta b_1+(3-d)b_1}{r^3}+
\cdots +\frac{\Lap_\theta b_{N^*-2}+(N^*-2)(N^*-d)b_{N^*-2}}{r^{N^*}}\right) \\
&+O(t)+\frac{S(t)g-g}{t}+\frac{1}{t}\int_0^t S(t-s)h(s)ds,
\end{aligned}
\]
where $O(t)$ denotes a term whose $\AH^{m,p}_N$-norm is bounded by $C\,t\,\|v\|_{\AH^{m,p}_N}$.
By Theorem \ref{th:SonH}, we know that $g\in H^{m+2,p}_{N}(\R^d)$ implies
\[
\lim_{t\to 0+0}\frac{S(t)g-g}{t}=\Lap g\quad\hbox{in}\,\,H^{m,p}_{N}(\R^d).
\]
Moreover, $v\in\AH^{m+2,p}_N$ implies 
$h\in C([0,\infty),H^{m+1,p}_{N}(\R^d))\subseteq C([0,\infty),H^{m,p}_{N}(\R^d))$, so
\begin{equation}\label{eq:lim=h}
\lim_{t\to 0+0}\frac{1}{t}\int_0^t S(t-s)h(s)\,ds= h(0)\quad\hbox{in}\,\,H^{m,p}_{N}(\R^d).
\end{equation}
Consequently, $\lim_{t\to 0}\big(S(t)v-v\big)/t$ in $H^{m,p}_N(\R^d)$ exists, so $v\in D(\Lambda)$ and
\begin{equation}\label{def:w}
\begin{aligned}
\Lambda v=\,\chi(r)\left(\frac{\Lap_\theta b_0}{r^2}+\cdots
+\frac{\Lap_\theta b_{N^*-2}+(N^*-2)(N^*-d)b_{N^*-2}}{r^{N^*}}\right) 
+\Delta g+h(0).
\end{aligned}
\end{equation}
Since $v\in \AH_N^{m+2,p}$ implies $b_k\in H^{m+3+N^*-k,p}(\s^{d-1})$, we have 
$\Lap_\theta b_k\in H^{m+1+N^*-k,p}(\s^{d-1})$, which is more than enough for the asymptotics in 
$\Lambda v$ to be in $\AH_{2,N}^{m,p}$. Moreover, by \eqref{eq:ODEs-5},
\[
\begin{aligned}
h(0)= &\chi\left(\frac{\Lap_\theta b_{N^*-1}+(N^*-1)(N^*+1-d)b_{N^*-1}}{r^{N^*+1}}
+\frac{\Lap_\theta b_{N^*}+N^*(N^*+2-d)b_{N^*}}{r^{N^*+2}}\right) \\
&+\,2\nabla\chi\cdot\nabla \tilde{b}+(\Lap \chi)\,\tilde{b},
\end{aligned}
\]
where $\tilde{b}:=b_0+\cdots+\frac{b_{N^*}}{r^{N^*}}$. Putting this in \eqref{def:w},  we see from 
\eqref{eq:Lap(u)} at $t=0$ that $\Lambda v=\Lap v$. 
\end{proof}

Let us now complete the proof of Theorem \ref{th:SonAH}.

\begin{proof}[Proof of Theorem \ref{th:SonAH}]  
By  Proposition \ref{pr:SonAH} above, we have only to prove the last two statements of the theorem.
In this proof we follow the notation introduced in the proof of Proposition \ref{pr:SonAH}. 
In particular, for $v\in\AH^{m,p}_N$ we write $v=b+g$ as in \eqref{def:v}.
We will show that $S(t)$ as expressed in \eqref{S(t)v=} extends to an analytic semigroup of angle $\pi/2$.
To this end, denote by $S_1(t)v$, $S_2(t)v$, and $S_3(t)v$, the summands in \eqref{S(t)v=} considered in the order 
of their appearance. Then, $S(t)=S_1(t)+S_2(t)+S_3(t)$, for $t\in[0,\infty)$.

By Theorem \ref{th:SonH}, we already know that $S(t)$ is an analytic semigroup of angle $\pi/2$ on $H_{N}^{m,p}(\R^d)$, 
so the operator $S_2(t)$ on $\AH^{m,p}_N$ defined by $v\mapsto S(t)g$ extends analytically to $S_2(z)$ for 
$z\in{\bbS}_{\pi/2}\setminus\{0\}$, and for any given $\epsilon>0$ we have that $S_2(z)v\equiv S(z)g\to g$ in 
$H_{N}^{m,p}(\R^d)$ as $z\to 0$, $z\in{\bbS}_{\pi/2-\epsilon}$. 
Moreover, using \eqref{eq:S-estimate} we obtain that there exists $M\equiv M_{p,\delta,\epsilon,N}$ such that
\begin{equation}\label{eq:S2-estimate}
\|S_2(z)v\|_{\AH^{m,p}_N} = \|S(z)g\|_{H^{m,p}_{N}}\le M (1+|z|)^{N/2}\|g\|_{H^{m,p}_{N}}
\le M (1+|z|)^{N/2}\|v\|_{\AH^{m,p}_N}
\end{equation}
for all $z\in {\bbS}_{\pi/2-\epsilon}$.

To handle $S_1(t)$, let us observe from \eqref{eq:ODEs-solution-a} and \eqref{eq:ak=...} that for any 
$0\le k\le N^*$, $a_k(t)$ depends linearly on $b_0,\dots,b_k$ in such a way that 
$a_0(t)=b_0$, $a_1(t)=b_1$, and
\begin{equation}
a_k(t)= b_k+\sum_{1\le j\le k/2}t^j\,D_{jk}[b_0,\dots,b_{k-2j}]\quad\hbox{for}\ 2\le k\le N^*,
\end{equation}
where each $D_{jk}$ is a linear differential operator such that
\[
D_{jk} : H^{m+1+N^*,p}(\s^{d-1})\times\cdots\times H^{m+1+N^*-(k-2j),p}(\s^{d-1})
\to H^{m+1+N^*-(k-2j),p}(\s^{d-1})
\]
is bounded. By replacing $t$ by $z$, we then see that $z\mapsto S_1(z)$, $\C\to\mathcal{L}(\AH^{m,p}_N)$, 
is analytic for all $z\in\C$ and $S_1(z)v\to b$ as $z\to 0$ in $\AH^{m,p}_N$. Moreover, there exists a positive constant 
$M>0$ such that
\begin{equation}\label{eq:S1-estimate}
\|S_1(z)v\|_{\AH^{m,p}_N}\le M\,(1+|z|)^{N^*/2}\,\|v\|_{\AH^{m,p}_N}
\end{equation}
for all  $z\in\C$.

Finally, we consider $S_3(t)v\equiv\int_0^t S(t-s)h(s)\,ds$ where $h(t)$ is given in \eqref{eq:ODEs-5}. 
Note that $h(t)$ is a polynomial of $t$ of degree $\le N^*/2$ with coefficients in $H^{m-1,p}_N(\R^d)$. 
For any $z\in\bbS_{\pi/2}$ we can write
\begin{equation}\label{eq:S_3}
S_3(z)v=\int_0^z S(\tau)h(z-\tau)\,d\tau
\end{equation}
where the integration is taken over the straight interval in $\C$ connecting $0$ with $z$
(or any other $C^1$-curve in $\bbS_{\pi/2}$ that connects $0$ with $z$).
One then sees from \eqref{eq:DS-estimate} that the improper integral in \eqref{eq:S_3} has an integrable singularity at 
$\tau=0$ and it converges locally uniformly in $z\in S_{\pi/2}$.
This together with the analyticity statement in Corollary \ref{coro:DS-estimate} then implies that the integral in \eqref{eq:S_3} 
is a uniform limit of analytic maps with values in $H^{m,p}_N$ on bounded sets of $z\in\bbS_{\pi/2}\setminus\{0\}$.
Hence, the map $z\mapsto S_3(z)v$, $\bbS_{\pi/2}\setminus\{0\}\to H^{m,p}_N(\R^d)$, is analytic and for any given $\epsilon>0$,
\[
S_3(z)v\to 0\quad\text{in}\quad H^{m,p}_N(\R^d)\quad\text{as}\quad z\to 0,\quad\,z\in\bbS_{\pi/2-\epsilon}. 
\]
Moreover, for $z\in\bbS_{\pi/2-\epsilon}$ we obtain from \eqref{eq:DS-estimate} and \eqref{eq:h-estimate} (which also holds 
for $t$ replaced by $z$ on the left and $t$ replaced by $|z|$ on the right side of the formula) that 
\begin{equation}\label{eq:S3-estimate}
\begin{aligned}
\big\|S_3(z)v\big\|_{\AH^{m,p}_N}&\le |z|\,\int_0^1\big \|S\big((1-\tau)z\big)\,h(\tau z)\big\|_{H^{m,p}_N}\,d\tau\\
&\le M\,|z| (1+|z|)^{N/2}(1+|z|)^{N^*/2}\,\|v\|_{\AH^{m,p}_N}\le M_1\,(1+|z|)^\mu\,\|v\|_{\AH^{m,p}_N}
\end{aligned}
\end{equation}
for some positive constant $M_1>0$ depending on $1<p<0$, $\delta\in\R$, $\epsilon\in(0,\pi/2)$, and $N\ge 0$,
and for $\mu\equiv\big(N+N^*+2\big)/2$. We can now apply \cite[Theorem 3.12 in Chapter 3]{Kato} as in 
the proof of Corollary \ref{coro:DS-estimate} above to conclude that $z\mapsto S_3(z)$, 
$\bbS_{\pi/2}\setminus\{0\}\to\mathcal{L}(\AH^{m,p}_N)$, is analytic. 

Summarizing the above, we see that the map $z\mapsto S(z)v$, 
$\bbS_{\pi/2}\setminus\{0\}\to\mathcal{L}\big(\AH^{m,p}_N(\R^d)\big)$, is analytic and, 
by \eqref{eq:S2-estimate}, \eqref{eq:S1-estimate}, and \eqref{eq:S3-estimate}, it satisfies \eqref{eq:S-estimate on AH}.
We also proved that $S(z)v\to v$ in $\AH^{m,p}_N$ as $z\to 0$, $z\in\bbS_{\pi/2-\epsilon}$ for any given $\epsilon\in(0,\pi/2)$. 
The semigroup property $S(z_1+z_2)=S(z_1)S(z_2)$ on ${\bbS}_{\pi/2}$ follows from the semigroup property
on $[0,\infty)$ and the analyticity of the extension of $\{S(t)\}_{t\ge 0}$ to ${\bbS}_{\pi/2}\setminus\{0\}$. 
The last statement of Theorem \ref{th:SonAH} follows from the fact that $a_0(z)=b_0$ and $a_1(z)=b_1$.
This completes the proof of Theorem \ref{th:SonAH}.
\end{proof}

\section{Equilibrium and asymptotics in a semilinear heat flow}\label{sec:application-nonlinear}
In this section, we illustrate our previous results by applying them to the simple model equation
\begin{equation}\label{semilinear1}
\left\{
\begin{array}{l}
u_t=\Lap u+\varphi-\psi\,u^3\quad\hbox{for}\quad t>0,\ x\in\R^3,\\
u|_{t=0}=v,
\end{array}
\right.
\end{equation}
where $\varphi,\psi\in{\mathcal S}(\R^3)$ with $\psi\ge 0$. We have specified $d=3$ and $u^3$ just for convenience; 
analogous results pertain for $d\geq 3$ and $u^k$ where $k$ is any odd integer.
We will consider $v\in X$  for both $X=H^{m,p}_\delta$ and $X=\AH^{m,p}_{1,N}$, and show that the heat flow is global in time on $X$. 
We start by considering {\em mild solutions} of \eqref{semilinear1}, i.e.\ continuous solutions of the integral equation
\begin{equation}\label{semilinear1-mild}
u(t)=S(t)v+\int_0^t S(t-s)\big(\varphi-\psi u^3(s)\big)\,ds.
\end{equation}
Our first preliminary result concerns mild solutions of \eqref{semilinear1} in the weighted Sobolev space $H^{1,p}_\delta$.

\begin{Prop}\label{pr:semilinear-global-Lp} 
For $p>3$, $\delta\ge 0$, and for any $v\in H_\delta^{1,p}$ the equation \eqref{semilinear1-mild} has a unique global solution 
$u\in C([0,\infty),H_\delta^{1,p})$.
\end{Prop}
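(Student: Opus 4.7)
The plan is a standard local‑then‑global argument: construct local mild solutions by a Banach fixed‑point, extract an a priori $L^\infty$‑bound from the sign condition $\psi\ge 0$ via the maximum principle, then close a Duhamel estimate in $H^{1,p}_\delta$ to rule out blow‑up.

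For local existence I would apply the contraction mapping theorem to
\[
\Phi(u)(t):=S(t)v+\int_0^t S(t-s)\big(\varphi-\psi u^3(s)\big)\,ds
\]
on a closed ball in $C([0,T],H^{1,p}_\delta)$. The key multiplicative bound is $\|\psi u^3\|_{H^{1,p}_\delta}\le C_\psi\,\|u\|_{H^{1,p}_\delta}^3$, which holds because $p>3=d$ gives Morrey's embedding $H^{1,p}_\delta\hookrightarrow L^\infty$ and because $\psi\in\Sz$ supplies arbitrary polynomial decay, so the weight $\x^\delta$ can be absorbed into $\psi$ on every factor that would otherwise lack decay. Combined with the bound $\|S(t)\|_{\mathcal{L}(H^{1,p}_\delta)}\le C(1+t)^{\delta/2}$ from Theorem \ref{th:SonH}, choosing $T$ small in terms of the data yields a unique mild solution $u\in C([0,T_*),H^{1,p}_\delta)$ on a maximal interval.

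Next I would extract an a priori $L^\infty$‑bound exploiting $\psi\ge 0$. Iterating Corollary \ref{coro:DS-estimate} inside the Duhamel formula upgrades the mild solution to $u(t,\cdot)\in H^{k,p}_\delta$ for every $k\ge 0$ and $t>0$, hence to a classical solution of \eqref{semilinear1} with $u(t,x)\to 0$ as $|x|\to\infty$ (by Morrey's Hölder estimate together with $L^p_\delta$‑integrability). At a point $x_*$ where $u(t,\cdot)$ attains its maximum, $\Lap u(t,x_*)\le 0$; if in addition $u(t,x_*)\ge 0$ then $\psi(x_*)\,u(t,x_*)^3\ge 0$, so the parabolic maximum principle yields
\[
\|u(t)\|_{L^\infty}\le\|v\|_{L^\infty}+t\,\|\varphi\|_{L^\infty}\quad\text{for every }t\in[0,T_*).
\]

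Feeding this back into Duhamel gives $\|\varphi-\psi u^3(s)\|_{L^p_\delta}\le\|\varphi\|_{L^p_\delta}+\|\x^\delta\psi\|_{L^p}\|u(s)\|_{L^\infty}^3\le C_\delta(1+s)^3$, and Corollary \ref{coro:DS-estimate} then bounds $\|u(t)\|_{H^{1,p}_\delta}$ by a quantity finite on every bounded interval, ruling out $T_*<\infty$. Uniqueness on $[0,\infty)$ follows from the same contraction machinery: $u_1-u_2=-\int_0^t S(t-s)\,\psi(u_1^3-u_2^3)(s)\,ds$ together with the factorization $u_1^3-u_2^3=(u_1-u_2)(u_1^2+u_1u_2+u_2^2)$, the a priori $L^\infty$‑bound on each $u_i$, and Gronwall. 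The main obstacle I expect is the regularity bootstrap needed to justify the pointwise maximum principle for the mild solution: the sign condition on $\psi$ is used at the PDE level, and the resulting $L^\infty$‑estimate genuinely fails for generic cubic nonlinearities, so this is where the argument has no slack.
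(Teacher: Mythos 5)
Your argument is correct but exploits the sign condition $\psi\ge 0$ in a genuinely different way than the paper. The paper never leaves the $L^p_\delta$ framework: using the $C^1$-in-time regularity supplied by Proposition~\ref{prop:analytic_semigroup_regularity}, it differentiates $\int\x^{\delta p}|u|^p\,dx$ in $t$, applies Green's identity, and discards the good term $-p\int\x^{\delta p}\psi|u|^{p+2}\,dx\le 0$; Young and H\"older then produce a Gronwall inequality showing $\|u(t)\|_{L^p_\delta}$ (and, by the same device, $\|u^3(t)\|_{L^p_\delta}$) grows at most exponentially, after which $\|\nabla u(t)\|_{L^p_\delta}$ is recovered via Duhamel and Proposition~\ref{prop:S-estimates}---exactly the step you perform at the end. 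You instead extract the sign at the pointwise level via a barrier/comparison argument, getting the cleaner linear-in-$t$ bound $\|u(t)\|_{L^\infty}\le\|v\|_{L^\infty}+t\|\varphi\|_{L^\infty}$ before closing in $H^{1,p}_\delta$ with Duhamel. Both are sound; the difference is in the regularity infrastructure required. The paper needs only $u(t)\in D(\Delta)=H^{3,p}_\delta$ for $t>0$, which is free from the analytic-semigroup theory, to justify the integration by parts; your route requires a classical solution with spatial decay to run the maximum principle, hence the $H^{k,p}_\delta$ bootstrap you correctly flag as the delicate point. That bootstrap does go through---$H^{m,p}_\delta$ is a Banach algebra for $m>d/p$, and the $\nabla S(t)$ bound of Proposition~\ref{prop:S-estimates} gains one derivative per Duhamel iteration with an integrable $t^{-1/2}$ singularity---but it is extra machinery the paper avoids. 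On the other hand, your comparison principle is more elementary at its core and in fact yields a sharper (polynomial rather than exponential in $t$) a priori bound than the paper's energy estimate.
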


\proof
Take $v\in H^1_\delta$. Since $1>3/p$, we know by Proposition \ref{pr:H-properties}(b) that $H^{1,p}_\delta$ is 
a Banach algebra, so $F(u):=\varphi-\psi u^3$ defines an analytic map $F : H^{1,p}_\delta\to H^{1,p}_\delta$ that 
satisfies the Lipschitz condition \eqref{local-Lipschitz} in Appendix \ref{sec:mildsolutions}. 
By Theorem \ref{th:SonH}, the Laplace operator $\Delta$, considered as an unbounded operator on $H^{1,p}_\delta$
with domain $H^{3,p}_\delta$, generates an analytic semigroup $\{S(t)\}_{t\ge 0}$  on $H^{2,p}_\delta$ given by the Poisson 
integral \eqref{eq:Gaussian}. Then, by Proposition \ref{prop:analytic_semigroup_regularity}, there exists a unique solution 
\begin{equation}\label{eq:u-differentiable}
u\in C([0,T),H^{1,p}_\delta)\cap C^1((0,T),H^{1,p}_\delta)
\end{equation}
of \eqref{semilinear1} (and \eqref{semilinear1-mild}) where $T\equiv T_{\rm max}$ denotes the maximal time of existence. 
To prove that $u$ is global, we will assume that $T<\infty$ and will show that $\|u(t)\|_{H^{1,p}_\delta}$ remains bounded for $t\in[0,T)$.
Then, the global existence will follow from Proposition \ref{pr:local-existence-uniqueness}.
It follows from \eqref{eq:u-differentiable} and the Sobolev embedding $H^{1,p}_\delta\subseteq C_b(\R^d)\cap L^p_\delta(\R^d)$,
where $C_b(\R^d)$ denotes the space of bounded continuous functions, 
that we can differentiate under the integral sign and obtain that for any $t\in(0,T)$,
\begin{align}
\frac{d}{dt} \int \x^{\delta p} |u(x,t)|^{p}\,dx
&=\,p\int \x^{\delta p} |u|^{p-2}uu_t\,dx= p\int \x^{\delta p}|u|^{p-2} u\big(\Delta u+\varphi-\psi u^3\big)\,dx\nonumber\\
&\le p\int \x^{\delta p}|\varphi| u |u|^{p-2}\,dx-p\int \nabla u\cdot\nabla\big(\x^{\delta p}|u|^{p-2} u\big)\,dx\nonumber\\
&\le p\int \x^{\delta p}|\varphi| |u|^{p-1}\,dx-p\int\delta p\x^{\delta p-2}|u|^{p-2} u\, \big(x\cdot\nabla u\big)\,dx\nonumber\\
&-p\,(p-1)\int \x^{\delta p} |u|^{p-2}|\nabla u|^2\,dx,\label{eq:norm-differentiation}
\end{align}
where we used Green's identity and the fact that $\psi\ge 0$.
If $\varphi\equiv 0$ and $\delta=0$, we see that $\|u\|_{L^p}$ is non-increasing; in particular, it remains bounded.
If we no longer assume $\varphi\equiv 0$ but retain $\delta=0$, then we obtain from \eqref{eq:norm-differentiation} and H\"older's
inequality that
\begin{align*}
\frac{d}{dt} \int |u(x,t)|^{p}\,dx&\le
p\int|\varphi| |u|^{p-1}\,dx \le p\left(\int|\varphi|^{p}\,dx\right)^{\frac{1}{p}}\left(\int |u|^{p}\,dx\right)^{\frac{p-1}{p}}\\
&\le C\left(\int |u(x,t)|^{p}\,dx\right)^{\frac{p-1}{p}}.
\end{align*}
We then solve this differential inequality to show that $\|u(t)\|_{L^{p}}$ grows at most like a power of $t$.
On the other hand, for $\delta>0$ we observe that for any $\e>0$,
$\big(ux\cdot\nabla u\big)\le|ux||\nabla u|\le\frac{1}{2\e}|u|^2\x^2+\frac{\e}{2}|\nabla u|^2$, and hence
\[
\int\x^{\delta p-2}|u|^{p-2}\,\big(ux\cdot\nabla u\big)\,dx\le\frac{1}{2\e}\int\x^{\delta p}|u|^{p}\,dx+
\frac{\e}{2}\int \x^{\delta p}|\nabla u|^2|u|^{p-2}\,dx .
\]
By taking $\e:=2(p-1)/(\delta p)$ we then obtain from \eqref{eq:norm-differentiation} and H\"older's inequality
that
\[
\frac{d}{dt}\int \x^{\delta p} |u(x,t)|^{p}\,dx\le p\int \x^{\delta p}|\varphi| |u|^{p-1}\,dx 
+C\int \x^{\delta p}|u|^{p}\,dx
\]
\[
\le C_1 \left(\int \x^{\delta p}|u(x,t)|^{p}\,dx\right)^{\frac{p-1}{p}}+C\int \x^{\delta p}|u(x,t)|^{p}\,dx
\]
with constants $C, C_1>0$ depending on $1<p<\infty$ and $\delta\ge 0$.
We then use this differential inequality to show that $\|u(t)\|_{L_\delta^{p}}$ grows at most exponentially in $t$;
in particular, it remains bounded on $(0,T)$. Exactly the same arguments applied to the $t$-derivative of $\|u^3(t)\|_{L^p_\delta}^p$ 
show that there exists $C'>0$ such that
\begin{equation}\label{eq:||u^3||-bounded}
\|u^3(t)\|_{L^p_\delta}<C'\quad\text{\rm for}\quad t\in(0,T).
\end{equation}
Finally we show that $\|\nabla u(t)\|_{L^{p}_\delta}$ remains bounded for $t\in(0,T)$. From \eqref{semilinear1-mild} we can estimate 
\[
\|\nabla u(t)\|_{L^p_\delta}\le \|S(t)\nabla v\|_{L^p_\delta}+\int_0^t \| S(t-s)\nabla \varphi \|_{L^p_\delta}ds
+\int_0^t\big\|\nabla S(t-s)\big(\psi u^3(s)\big)\big\|_{L^p_\delta}\,ds.
\]
Note that by Theorem \ref{th:SonH}, $\|S(t)\nabla v\|_{L^p_\delta}\le C(1+t)^{\delta/2}\|\nabla v\|_{L^p_\delta}$ and 
\[
\int_0^t \|S(t-s)\nabla\varphi\|_{L^p_\delta}ds \le C (1+t)^{\delta/2 +1}\|\nabla \varphi\||_{L^p_\delta}.
\]
Moreover, in view of by Proposition \ref{prop:S-estimates} and \eqref{eq:||u^3||-bounded}, we also have
\begin{align*}
\int_0^t \|\nabla S(t-s)\big(\psi u^3(s)\big)\|_{L^p_\delta}\,ds\le 
C_1\int_0^t (1+t-s)^{\delta/2}(t-s)^{-1/2}\|\psi u^3(s)\|_{L^p_\delta}\,ds\\
\le C_1(1+t)^{\delta/2} \int_0^t (t-s)^{-1/2} e^{ks}\,ds\le C_2(1+t)^{\delta/2} t^{1/2} e^{kt}
\end{align*}
where $C_1,C_2>0$ are constants. In this way we see that $\|u(t)\|_{H^{1,p}_\delta}$ remains bounded for $t\in(0,T)$ 
where $T>0$ is the maximal time of existence of the solution. This implies that $u\in C([0,\infty),H_\delta^{1,p})$.
\endproof

We can improve the regularity as follows:

\begin{Prop}\label{pr:globalexistence-m+2} 
For $p>3$, $\delta\ge 0$, integer $m\ge 0$, and  any $v\in H_\delta^{m+1,p}$, the equation \eqref{semilinear1-mild} has 
a unique global solution $u\in C([0,\infty),H_\delta^{m+1,p})$.
\end{Prop}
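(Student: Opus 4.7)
The approach I would take is induction on $m\ge 0$, with the base case $m=0$ being precisely Proposition \ref{pr:semilinear-global-Lp}. For the inductive step, assume the statement at level $m$ and take $v\in H^{m+2,p}_\delta\subseteq H^{m+1,p}_\delta$. The inductive hypothesis supplies a unique global mild solution $u\in C([0,\infty),H^{m+1,p}_\delta)$ of \eqref{semilinear1-mild}; uniqueness in the smaller space $H^{m+2,p}_\delta$ will then follow automatically from uniqueness in $H^{m+1,p}_\delta$, so only the regularity upgrade needs work.

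The first ingredient I would use is that, since $p>3=d$ and $m+1\ge 1>d/p$, Proposition \ref{pr:H-properties}(b) makes $H^{m+1,p}_\delta$ a Banach algebra; hence $s\mapsto \varphi-\psi u^3(s)$ is a continuous curve in $H^{m+1,p}_\delta$ whose norm is bounded on each compact interval $[0,T]$ in terms of $\|u\|_{C([0,T],H^{m+1,p}_\delta)}$. The second ingredient is the smoothing estimate of Corollary \ref{coro:DS-estimate}, giving
\[
\|S(t-s)g\|_{H^{m+2,p}_\delta}\le C\,\max\big((t-s)^{-1/2},1\big)(1+(t-s))^{|\delta|/2}\,\|g\|_{H^{m+1,p}_\delta}.
\]

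I then apply the mild formula
\[
u(t)=S(t)v+\int_0^t S(t-s)\big(\varphi-\psi u^3(s)\big)\,ds.
\]
The first term lies in $C([0,\infty),H^{m+2,p}_\delta)$ by Theorem \ref{th:SonH}, since $v\in H^{m+2,p}_\delta$. In the convolution term the smoothing estimate produces an integrand with an integrable singularity of order $(t-s)^{-1/2}$ at $s=t$, so the integral converges absolutely in $H^{m+2,p}_\delta$ for each $t\ge 0$ and is uniformly bounded on $[0,T]$ for every finite $T$.

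The step I anticipate as the main technical obstacle is promoting this pointwise-in-$t$ membership to genuine continuity $t\mapsto u(t)\in H^{m+2,p}_\delta$, since the smoothing kernel is only locally integrable, not uniformly so near $s=t$. The standard remedy I would use is to split the convolution at $s=t-\eta$: on $[0,t-\eta]$ the integrand is uniformly controlled and continuity in $t$ follows by dominated convergence together with strong continuity of $S(\cdot)$ on $H^{m+2,p}_\delta$, while the tail on $[t-\eta,t]$ is bounded uniformly by $\int_0^{\eta}\tau^{-1/2}\,d\tau=2\sqrt{\eta}$ times a time-independent constant depending on $\|u\|_{C([0,T],H^{m+1,p}_\delta)}$ and $\|\varphi\|_{H^{m+1,p}_\delta}$. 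Letting $\eta\to 0$ yields continuity in $H^{m+2,p}_\delta$, and this closes the induction.
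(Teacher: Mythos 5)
Your argument is correct, but it takes a genuinely different route from the paper's. The paper also inducts on $m$ with Proposition \ref{pr:semilinear-global-Lp} as the base case, but for the inductive step it \emph{differentiates} the mild equation: it sets $q:=3\psi u^2$ and $r:=\partial\varphi-(\partial\psi)u^3$, observes that $q,r\in C([0,\infty),H^{1,p}_\delta)$ by the Banach algebra property, and then notes that $w:=\partial u$ is a mild solution of the \emph{linear} problem $w_t=\Delta w-qw+r$, $w|_{t=0}=\partial v$, whose nonlinearity $-q(t)w+r(t)$ is globally Lipschitz in $w$. It then invokes Proposition \ref{pr:global-existence-uniqueness} (global existence for globally Lipschitz $F$) plus uniqueness to conclude $\partial u\in C([0,\infty),H^{1,p}_\delta)$, hence $u\in C([0,\infty),H^{2,p}_\delta)$. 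You instead keep the mild formula intact and bootstrap one rung of regularity through the parabolic smoothing estimate of Corollary \ref{coro:DS-estimate}, using the integrable $(t-s)^{-1/2}$ singularity and a split at $s=t-\eta$ for the continuity. Both approaches are valid: the paper's trades the quantitative smoothing estimate for a reduction to the linear global theory (the continuity issue is absorbed into Proposition \ref{pr:global-existence-uniqueness}, so no $\eta$-splitting is needed), while yours is a direct parabolic bootstrap that makes explicit use of the half-derivative gain already proved in Section \ref{sec:weighted_spaces}. Your treatment of the only delicate point --- upgrading pointwise membership in $H^{m+2,p}_\delta$ to continuity in $t$ --- is the standard one and is correct; just note that the uniform bound on the tail $\int_{t-\eta}^t$ uses that $\|\varphi-\psi u^3(s)\|_{H^{m+1,p}_\delta}$ is bounded on $[0,T]$, which indeed follows from the inductive hypothesis together with the Banach algebra property (valid since $m+1\ge 1>3/p$ for $p>3$).
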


\proof
We proceed by induction on $m$. Proposition \ref{pr:semilinear-global-Lp} provides the case $m=0$, and we will now prove 
the case $m=1$; the general induction step is analogous. 

So let us assume $v\in H_\delta^{2,p}$ and let $u\in C([0,\infty),H_\delta^{1,p})$ denote the solution obtained in 
Proposition \ref{pr:semilinear-global-Lp}. Let $q:=3\psi u^2$ and $r:=\partial\varphi-(\partial\psi)u^3$ where 
$\partial$ denotes any first order spatial (weak) derivative. 
Since $u\in C([0,\infty),H^{1,p}_\delta)$ and since $H^{1,p}_\delta$ is a Banach algebra, we have that 
$q,r\in C([0,\infty),H_\delta^{1,p})$. By applying $\partial$ to  \eqref{semilinear1-mild} and then using that
$S(t)$ commutes with $\partial$, we see that $\partial u\in C([0,\infty),L^p_\delta)$ is a mild solution of
\begin{equation}\label{linear-w}
\left\{
\begin{array}{l}
w_t=\Delta w-qw+r, \quad\hbox{for $t>0$, \ $x\in\R^3$,} \\
w|_{t=0}=\partial v.
\end{array}
\right.
\end{equation}
In particular, the non-homogeneous term $F(t,w)\equiv -q(t)w+r(t)$ in \eqref{linear-w} is globally Lipschitz in 
$H^{1,p}_\delta$ (see \eqref{global-Lipschitz}). Consequently, by Proposition \ref{pr:global-existence-uniqueness} in 
Appendix \ref{sec:mildsolutions}, \eqref{linear-w} has a unique mild solution $w\in C([0,\infty),H^{1,p}_\delta)$. 
By the uniqueness, $\partial u\equiv w\in C([0,\infty),H^{1,p}_\delta)$. Since $\partial$ is any partial derivative, we have 
$u\in C([0,\infty),H^{2,p}_\delta)$.
\endproof

By combining the results above with Proposition \ref{pr:localexistence-H} in Appendix \ref{sec:mildsolutions} we obtain
the following

\begin{Th}\label{th:globalexistence-m+2} 
For $p>3$, $\delta\ge 0$,  integer $m\ge 0$, and  any $v\in H_\delta^{m+2,p}$, the equation \eqref{semilinear1} 
has a unique global  solution $u\in C([0,\infty),H_\delta^{m+2,p})\cap C^1([0,\infty), H_\delta^{m,p})$.
\end{Th}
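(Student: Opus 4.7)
The plan is to bootstrap the mild-solution result of Proposition \ref{pr:globalexistence-m+2} by one regularity level and then invoke the appendix's lifting result to upgrade mild to classical.

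First, I would apply Proposition \ref{pr:globalexistence-m+2} with $m+1$ in place of $m$: the hypothesis $v\in H_\delta^{m+2,p}$ yields a unique global mild solution
\[
u\in C\big([0,\infty),H_\delta^{m+2,p}\big)
\]
of the Duhamel integral equation \eqref{semilinear1-mild}. This already provides the target spatial regularity for all $t\ge 0$.

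Second, I would promote this mild solution to a classical one. Since $p>3$ implies $m+2>d/p$, the space $H_\delta^{m+2,p}$ is a Banach algebra (Proposition \ref{pr:H-properties}), so the nonlinearity $F(u):=\varphi-\psi u^3$ is locally Lipschitz as a self-map of $H_\delta^{m+2,p}$. Together with Theorem \ref{th:SonH}, which identifies $H_\delta^{m+2,p}$ as the domain of the generator $\Delta$ of the analytic semigroup on $H_\delta^{m,p}$, the hypotheses of Proposition \ref{pr:localexistence-H} are satisfied at the initial time, producing a classical solution on some interval $[0,T^*)$. Uniqueness of mild solutions at the $H_\delta^{m+1,p}$-level (Proposition \ref{pr:globalexistence-m+2}) identifies this classical solution with the global mild solution from Step 1, so it extends to $[0,\infty)$.

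Third, the time regularity follows directly from the equation. From $u\in C([0,\infty),H_\delta^{m+2,p})$ we obtain $\Delta u\in C([0,\infty),H_\delta^{m,p})$, while the Banach-algebra property together with $\varphi,\psi\in\Sz(\R^3)$ gives $\varphi-\psi u^3\in C([0,\infty),H_\delta^{m+2,p})\subseteq C([0,\infty),H_\delta^{m,p})$. The identity $u_t=\Delta u+\varphi-\psi u^3$ then forces $u_t\in C([0,\infty),H_\delta^{m,p})$, i.e.\ $u\in C^1([0,\infty),H_\delta^{m,p})$. Uniqueness of the classical solution in this class follows from the fact that any such solution is automatically a mild solution, and mild solutions are unique by Proposition \ref{pr:globalexistence-m+2}.

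The main obstacle is not computational but structural: matching the classical local solution from Proposition \ref{pr:localexistence-H} with the global mild solution so that classical existence becomes global. This gluing depends crucially on the higher-order a priori bound $u\in C([0,\infty),H_\delta^{m+2,p})$ already in hand from Step 1; without it, one would be forced to derive an independent higher-order energy estimate, which the chain of Propositions \ref{pr:semilinear-global-Lp}--\ref{pr:globalexistence-m+2} has been designed precisely to avoid.
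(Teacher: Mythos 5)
Your argument is essentially correct and follows the same two-step strategy as the paper: obtain the global mild solution in $C([0,\infty),H^{m+2,p}_\delta)$ by applying Proposition~\ref{pr:globalexistence-m+2} with $m+1$ in place of $m$, then upgrade to a classical solution using Proposition~\ref{pr:localexistence-H} together with the Banach-algebra property of $H^{m+2,p}_\delta$. The one place where you diverge from the paper is the mechanism for passing from local classical regularity to global classical regularity. You invoke Proposition~\ref{pr:localexistence-H} once, at $t_0=0$, and then write ``so it extends to $[0,\infty)$.'' As stated this is too fast: identifying the local classical solution with the global mild solution on the overlap does not, by itself, show that the maximal time $T_{\rm max}$ of the classical solution is infinite. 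What does the work is the blow-up alternative built into Proposition~\ref{pr:local-existence-uniqueness}: if $T_{\rm max}<\infty$ then $\|u(t)\|_{H^{m+2,p}_\delta}\to\infty$ as $t\to T_{\rm max}$, which is ruled out because your Step~1 solution remains bounded in $H^{m+2,p}_\delta$ on compact time intervals. You should say this explicitly. The paper sidesteps the blow-up criterion entirely by a continuation argument: it applies Proposition~\ref{pr:localexistence-H} afresh at \emph{every} $t_0>0$ (with initial data $u(t_0)\in H^{m+2,p}_\delta$), observes that the resulting local classical solution coincides with $u$ on $[t_0,t_0+\epsilon)$ by mild uniqueness, and concludes local classical regularity around every time. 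Both routes are valid; yours is slightly more economical once the blow-up alternative is made explicit, while the paper's restart-at-every-$t_0$ version is self-contained and avoids appealing to that alternative. Your Step~3 is redundant: once Step~2 establishes that $u$ is a classical solution via Proposition~\ref{pr:localexistence-H}, the $C^1([0,\infty),H^{m,p}_\delta)$ regularity is already part of the conclusion \eqref{u-strict}, so rederiving it from the equation adds nothing. Finally, a minor slip: the uniqueness you invoke is at the $H^{m+2,p}_\delta$ level (that is, Proposition~\ref{pr:globalexistence-m+2} with $m$ replaced by $m+1$), not at the $H^{m+1,p}_\delta$ level as written.
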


\proof
Let $u\in C([0,\infty),H^{m+2,p}_\delta)$ be the unique mild solution of \eqref{semilinear1} in Proposition 
\ref{pr:globalexistence-m+2}. Note that $p>3$ guarantees $m+2>3/p$, so Proposition \ref{pr:H-properties}(b) implies that 
$H^{m+2,p}_\delta$ is a Banach algebra. Consequently, $F(u)\equiv\varphi-\psi u^3$ defines an analytic map 
$F : H^{m+2,p}_\delta\to H^{m+2,p}_\delta$ that satisfies the Lipschitz condition \eqref{local-Lipschitz}. 
So we can apply Proposition \ref{pr:localexistence-H} to conclude that for any $t_0>0$ there exists a unique solution 
${\tilde u}\in C([t_0,t_0+\epsilon),H_\delta^{m+2,p})\cap C^1([t_0,t_0+\epsilon), H_\delta^{m,p})$ for some 
$\epsilon>0$ of the equation
\begin{equation}\label{eq:semilinear_shifted}
\left\{
\begin{array}{l}
u_t=\Delta u+F(u),\\
u|_{t=t_0}=u(t_0)\,.
\end{array}
\right.
\end{equation}
But this solution is also a mild solution of \eqref{eq:semilinear_shifted} on
$[t_0,t_0+\epsilon)$, so for $t\in[t_0,t_0+\epsilon)$ it coincides with the mild solution of \eqref{eq:semilinear_shifted},
\[
u(t)=S(t_0-t)u(t_0)+\int_0^{t-t_0}S(t_0-t-s) F(u(s))\,ds.
\]
In view of the arbitrariness of the choice of $t_0>0$ we then conclude that 
$u\in C([0,\infty),H_\delta^{m+2,p})\cap C^1([0,\infty), H_\delta^{m,p})$ 
and that it is a solution of \eqref{semilinear1}.
\endproof

Now we are ready to consider $v\in \AH_{1,N}^{m+2,p}$ in \eqref{semilinear1}, in other words to prove Theorem \ref{th:semilinear-global}.

\begin{proof}[Proof of Theorem \ref{th:semilinear-global}]
The hypotheses imply $\AH^{m+2,p}_{1,N}\subseteq H^{m+2,p}$, so we can apply Theorem \ref{th:globalexistence-m+2} to 
find a (unique) solution of \eqref{semilinear1} satisfying $u\in C([0,\infty),H^{m+2,p})\cap C^1([0,\infty),H^{m,p})$. 
We want to show that $u(t)\in \AH^{m+2,p}_{1,N}$ for all $t>0$ and 
$u\in C([0,\infty),\AH^{m+2,p}_{1,N})\cap C^1([0,\infty),\AH^{m,p}_{1,N})$.
Since $p>3$ we have $N^*\equiv N$ and we can write 
\[
v(x)=b(x)+g(x)=\chi(r)\left(\frac{b_1(\theta)}{r}+\cdots+\frac{b_{N}(\theta)}{r^{N}}\right)+g(x),
\]
where $b_k\in H^{m+3+N-k,p}(S^{2})$ and $g\in H^{m+2,p}_{N}$.
We seek $u$ of the form
\begin{equation}\label{eq:u=a+f}
u(x,t):=a(x,t)+f(x,t)=\chi(r)\left(\frac{a_1(\theta,t)}{r}+\cdots+\frac{a_{N}(\theta,t)}{r^{N}}\right)+f(x,t)
\end{equation}
with initial conditions $a_k(\theta,0)=b_k(\theta)$, $0\le k\le N$, and $f(x,0)=g(x)$, and such that
\begin{eqnarray*}
&f\in C([0,\infty),H^{m+2,p})\cap C^1((0,\infty),H^{m,p}),\\
&a_k\in C([0,\infty),H^{m+2,p}(S^2))\cap C^1((0,\infty),H^{m,p}(S^2)),\,\,1\le k\le N.
\end{eqnarray*}
Plugging $u$ into $u_t=\Lap u+\varphi-\psi u^3$ we obtain 
\begin{equation}\label{semilinear1'}
a_t+f_t=\Lap a+\Lap f+\varphi-\psi u^3\,.
\end{equation}
Since $u\in C([0,\infty),H^{m+2,p})\cap C^1([0,\infty),H^{m,p})$ satisfies \eqref{semilinear1}, 
we see from \eqref{eq:Lap(u)}, \eqref{eq:u_dot}, and \eqref{semilinear1'} that if $a$ satisfies the system of equations:
\begin{equation}\label{eq:a}
\begin{aligned}
&\dt{a}_1(t)=\dt{a}_2(t)=0 \Rightarrow a_1(t)=b_1,\,a_2(t)=b_2,\\
&\dt{a}_3(t)=\Delta_\theta a_1(t)\\
&\dt{a}_4(t)=\Delta_\theta a_2(t)+2a_2(t),\\
&\cdots\\
&\dt{a}_{N}(t)=\Delta_\theta a_{N-2}(t)+(N-2)(N-3) a_{N-2}(t),
\end{aligned}
\end{equation}
with initial conditions $a_k(0)=b_k$, $1\le k\le N$, then $f(t):=u(t)-a(t)$ satisfies
\begin{equation}\label{eq:f}
\left\{
\begin{array}{l}
\dt{f}(t)=\Delta f+h(t)+\varphi-\psi\,u(t)^3,\\
f|_{t=0}=g,
\end{array}
\right.
\end{equation}
where $h(t)$ is given by \eqref{eq:ODEs-5} and where we have again used a dot to denote $t$-differentiation. 
We can successively integrate \eqref{eq:a} and obtain that for any $1\le k\le N$, $a_k(t)$ ia a polynomial
of $t$ of degree $\le (k-1)/2$ with coefficients in $H^{m+3+N-k}(S^2)$.
This together with \eqref{eq:u=a+f} then implies that
\begin{equation}\label{eq:a-regularity}
a\in C([0,\infty), \AH^{m+2,p}_{1,N})\cap C^1([0,\infty), \AH^{m,p}_{1,N}).
\end{equation}
Let us now turn our attention to equation \eqref{eq:f}, that can be written as
\begin{equation}\label{eq:f'}
\left\{
\begin{array}{l}
\dt{f}(t)=\Delta f+A(t),\quad A(t):=h(t)+\varphi-\psi u(t)^3\\
f|_{t=0}=g,
\end{array}
\right.
\end{equation}
where $u\in C([0,\infty),H^{m+2,p})\cap C^1([0,\infty),H^{m,p})$
and
\[
h\in C^1([0,\infty),H^{m+1,p}_N)\subseteq C^1([0,\infty),H^{m,p}_N)
\] 
by \eqref{eq:ODEs-5}. Since $\varphi,\psi\in\Sz$ we then obtain that
\begin{equation}\label{eq:A}
A\in C^1([0,\infty),H^{m,p}_N).
\end{equation}
It then follows from Theorem \ref{th:SonH} and the last statement of 
Proposition \ref{pr:local-existence-uniqueness} in Appendix \ref{sec:mildsolutions} that
\[
f\in C([0,\infty),H^{m+2,p}_N)\cap C^1([0,\infty),H^{m,p}_N).
\]
This together with \eqref{eq:a-regularity} shows that
$u\in C([0,\infty),\AH_{1,N}^{m+2,p})\cap C^1([0,\infty),\AH_{1,N}^{m,p})$.
This completes the proof of the theorem.
\end{proof}


Let us now prove Theorem \ref{th:semilinear-equilibrium} concerning solutions of  the semilinear elliptic equation
\begin{equation}\label{eq:semilinear-equilibrium1}
\Delta u=\psi\, u^3-\varphi\quad\hbox{for}\quad x\in\R^3.
\end{equation}
More specifically, we will prove that for any $p>3$, integer numbers $m,N\ge 1$, and for any $\psi,\varphi\in\mathcal{S}(\R^3)$ with 
$\psi\ge 0$ the semilinear elliptic equation \eqref{eq:semilinear-equilibrium1} has a unique solution $u_*\in\AH^{m+2,p}_{1,N}$.
We will also prove that the solution $u_*$ has non-trivial asymptotic coefficients generically.

\begin{proof}[Proof of Theorem \ref{th:semilinear-equilibrium}]
In this proof we use the weighted Sobolev spaces $W^{m,p}_\delta$ and the asymptotic spaces $\A^{m,p}_{n,N}$ that 
are defined in Appendix \ref{sec:appendix_properties}. In particular, for integer $m\ge 1$, 
let ${\mathcal I}^{m,p}_{1,N}$ denote the subspace of $\A^{m,p}_{1,N}$ consisting of functions $u$ of the form
\begin{equation*}
u(x)=\chi(r)\left(\frac{a_1(\theta)}{r}+\cdots+\frac{a_{N}(\theta)}{r^{N}}\right)+f(x), \quad f\in W^{m,p}_{\gamma_N},
\end{equation*}
where $a_k$ is an eigenfunction for the Laplace-Beltrami operator $-\Delta_S$ on the unit sphere $S^2$ with
eigenvalue $\lambda_{k-1}=k(k-1)$ and $\gamma_N\equiv\gamma_0+N$ with 
\begin{equation}\label{eq:gamma_range}
0<\gamma_0+3/p<1. 
\end{equation}
Since the eigenspace ${\mathcal H}_{k-1}$ for $\lambda_{k-1}$ is finite-dimensional and 
consists of smooth functions, ${\mathcal I}^{m,p}_{1,N}$ is a closed subspace of $\A^{m,p}_{1,N}$. 
Since $m\ge 1>3/p$ for $p>3$ we conclude that $\A^{m,p}_{1,N}$ and $W^{m,p}_{\gamma_N}$ are 
Banach algebras (cf.\ \cite{McOwenTopalov2}). Moreover, a calculation in spherical coordinates shows 
$\Delta\big(a_k(\theta)/r^k\big)=0$ for $r>0$, so it is clear that 
\begin{equation} \label{Laplacian-I}
\Delta: {\mathcal I}^{\ell+2,p}_{1,N} \to W^{\ell,p}_{\gamma_N+2}
\end{equation}
for any integer $\ell\ge 0$. In fact, by Proposition B.1 in \cite{McOwenTopalov4}, \eqref{Laplacian-I} is an isomorphism.
We will also need the fact that (see \cite[Theorem 5.1]{Lockhart},\cite[Lemma 2.1]{Choquet-Bruhat-C})
\begin{equation}\label{W-compactness}
W^{m,p}_\delta\subseteq W^{m_1,p}_{\delta_1} \quad\hbox{is a compact embedding for}\ m>m_1\ \hbox{and}\ \delta>\delta_1.
\end{equation}

\medskip

\noindent{\em The map $\F$:} Take $\psi\in\Sz$ such that $\psi\ge 0$ and consider the map
\begin{equation}\label{eq:FF}
\F : {\mathcal I}^{m+2,p}_{1,N}\to W^{m,p}_{\gamma_N+2},\quad u\mapsto\Delta u-\psi\,u^3.
\end{equation}
Since $\A^{m+2,p}_{1,N}$ is a Banach algebra and $\psi\in\Sz$ the map \eqref{eq:FF} is continuously differentiable 
and for any given $u\in {\mathcal I}^{m+2,p}_{1,N}$ we have that
\begin{equation}\label{eq:FF-differentiable}
d_u\F(w)=\Delta w-3\psi\,u^2 w
\end{equation}
where $d_u\F : {\mathcal I}^{m+2,p}_{1,N}\to W^{m,p}_{\gamma_N+2}$ is the differential of \eqref{eq:FF} at $u$.

\medskip

\noindent{\em The local diffeomorphism property of $\F$}:
Note that the map
\[
K_u : {\mathcal I}^{m+2,p}_{1,N}\to W^{m,p}_{\gamma_N+2},\quad  w\mapsto-3\psi\,u^2 w,
\]
can be decomposed as ${\mathcal I}^{m+2,p}_{1,N}\to W^{m+2,p}_{\gamma_N+3}\subseteq W^{m,p}_{\gamma_N+2}$
which is compact in view of \eqref{W-compactness}. Since $\Delta : {\mathcal I}^{m+2,p}_{1,N}\to W^{m,p}_{\gamma_N+2}$
is an isomorphism, we then conclude that the differential \eqref{eq:FF-differentiable} is a Fredholm operator of index zero.
We will now prove that the kernel of \eqref{eq:FF-differentiable} is trivial, and hence,  \eqref{eq:FF-differentiable} is an isomorphism.
To this end, assume that $w\in{\mathcal I}^{m+2,p}_{1,N}$ satisfies the elliptic equation
\[
\Delta w=3\psi\,u^2 w.
\]
for some $u\in{\mathcal I}^{m+2,p}_{1,N}$. 
Since $u,w\in{\mathcal I}^{m+2,p}_{1,N}$ for $m\geq 1$, we conclude that $u,w\in C^2(\R^3)$ and that
\begin{equation}\label{eq:w_decay}
w(x)=O(1/r)\quad\text{\rm as}\quad|x|\to\infty.
\end{equation}
For a given $R>0$ consider the open ball $\Omega_R:=\big\{x\in\R^3\,:\,|x|<R\big\}$ in $\R^3$
as well as the open sets
\[
\Omega_R^+:=\big\{x\in\R^3\,:\,w(x)>0\big\}\quad\text{\rm and}\quad\Omega_R^-:=\big\{x\in\R^3\,:\,w(x)<0\big\}.
\]
If $\Omega_R^+$ is not empty we have that 
\[
\Delta w=3\psi\,u^2 w\ge 0\quad \hbox{in}\quad{\Omega_R^+}.
\]
Then, $\max_{\overline{\Omega}_R} w=\max_{\overline{\Omega}_R^+} w\le\max_{|x|=R}|w|$, where we have used 
the maximum principle on $\Omega_R^+$ and the fact that $w\equiv 0$ on the boundary of $\Omega_R^+$ in $\Omega_R$.
Hence $\max_{\overline{\Omega}_R} w\le\max_{|x|=R}|w|$, which also holds trivially in the case when $\Omega_R^+$ is empty.
The same argument applied to $(-w)$ on $\Omega_R^-$ then implies that
\begin{equation}\label{eq:w-estimate}
\max_{\overline{\Omega}_R}|w|\le\max_{|x|=R}|w|\,.
\end{equation}
By taking $R\to\infty$ in \eqref{eq:w-estimate} we then conclude from \eqref{eq:w_decay} that $w\equiv 0$. This proves that 
the differential $d_u\F : {\mathcal I}^{m+2,p}_{1,N}\to W^{m,p}_{\gamma_N+2}$ is an isomorphism of Banach spaces. 
Hence, by the inverse function theorem, the map \eqref{eq:FF} is a {\em local diffeomorphism}.

\medskip

\noindent{\em The properness of $\F$:}
Our next task is to prove that the map \eqref{eq:FF} is proper, which will imply that \eqref{eq:FF} is a diffeomorphism.
Recall that a map $F : X\to Y$ between two metric spaces is called {\em proper} if the preimage of any compact set in $Y$ is compact
in $X$. In order to show that $\F$ is proper, take a sequence $(u_j)_{j\ge 1}$ in ${\mathcal I}^{m+2,p}_{1,N}$ such 
that $\F(u_j)$ converges in $W^{m,p}_{\gamma_N+2}$ to some $f\in W^{m,p}_{\gamma_N+2}$:
\begin{equation}\label{eq:properness_assumption}
f_j:=\F(u_j)\to f \quad \hbox{in}\quad W^{m,p}_{\gamma_N+2}\ \text{\rm as}\  j\to\infty .
\end{equation}
We want to show that $(u_j)_{j\ge 1}$ contains a subsequence that converges in ${\mathcal I}^{m+2,p}_{1,N}$.
For any $j\geq 1$, we use $\Delta^{-1} : W^{m,p}_{\gamma_N+2}\to{\mathcal I}^{m+2,p}_{1,N}$, i.e.\ 
the inverse of the isomorphism  \eqref{Laplacian-I}, to define  $\tilde{f}_j:=\Delta^{-1} f_j$ and $\tilde{f}:=\Delta^{-1} f$, 
and hence obtain
\begin{equation}\label{eq:properness_relation}
\left\{
\begin{array}{l}
\Delta\big(u_j+\tilde{f}_j\big)=\psi u_j^3,\\
\tilde{f}_j\to \tilde{f}\quad \text{in}\quad{\mathcal I}^{m+2,p}_{1,N} \ 
\text{\rm as}\ j\to\infty.
\end{array}
\right.
\end{equation}
In what follows we will again use the maximum principle. Fix $j\geq 1$, take $R>0$, and consider the open sets $\Omega_R$ and 
$\Omega_R^\pm$ in $\R^3$ defined as above but with $w$ replaced by $u_j$. Assume that $\Omega_R^+$ is not empty. 
It then follows from \eqref{eq:properness_relation} that
\[
\Delta\big(u_j+\tilde{f}_j\big)=\psi u_j^3\ge 0\quad \text{in}\quad{\Omega_R^+}.
\]
By the maximum principle, 
\begin{equation}\label{eq:maximum_principle2}
\max_{\overline{\Omega}_R^+}\big(u_j+\tilde{f}_j\big)
\le\max_{|x|=R}|u_j|+\max_{\overline{\Omega}_R}|\tilde{f}_j|
\end{equation}
where we used that $u_j\equiv 0$ on the boundary of $\Omega_R^+$ in $\Omega_R$ and the inequality
$\max_{\partial\big(\overline{\Omega}_R^+\big)}|\tilde{f}_j|\le\max_{\overline{\Omega}_R}|\tilde{f}_j|$.
Since
$\max_{\overline{\Omega}_R^{-}}\big(u_j+\tilde{f}_j\big)\le\max_{|x|=R}|u_j|+\max_{\overline{\Omega}_R}|\tilde{f}_j|$,
we obtain that
\[
\max_{\overline{\Omega}_R}\big(u_j+\tilde{f}_j\big)\le\max_{|x|=R}|u_j|+\max_{\overline{\Omega}_R}|\tilde{f}_j|.
\]
This estimate also trivially holds in the case when $\Omega_R^+$ is empty.
The same argument applied to $(-u_j)$ and $(-\tilde{f}_j)$ on $\Omega_R^-$ then implies that
\begin{equation}\label{eq:u-estimate}
\max_{\overline{\Omega}_R}\big|u_j+\tilde{f}_j\big|\le\max_{|x|=R}|u_j|+\max_{\overline{\Omega}_R}|\tilde{f}_j|\,.
\end{equation}
By taking $R\to\infty$ we then see that
$\|u_j+\tilde{f}_j\|_{\infty}\le\|\tilde{f}_j\|_{\infty}$ 
where $\|\cdot\|_{\infty}$ denotes the sup-norm in $L^\infty(\R^3)$.
Hence
\[
\|u_j\|_{\infty}\le 2\|\tilde{f}_j\|_{\infty}\le C
\]
uniformly in $j\ge 1$, where we used the second relation in \eqref{eq:properness_relation}
to conclude that $(\tilde f_j)_{j\ge 1}$ is bounded in ${\mathcal I}^{\ell+2,p}_{1,N}\subseteq L^\infty$.
This implies that
\[
\|\psi u_j^3\|_{L^p_{\gamma_N +2}}\le C^3\|\psi\|_{L^p_{\gamma_N +2}}
\]
uniformly in $j\ge 1$. Then, it follows from \eqref{eq:properness_relation} and the fact that \eqref{Laplacian-I} is 
an isomorphism for $\ell=0$ that $u_j$ is bounded in ${\mathcal I}^{2,p}_{1,N}$ uniformly in $j\ge 1$.
Since ${\mathcal I}^{\ell+2,p}_{1,N}\subseteq\A^{\ell+2,p}_{1,N}$ and $\A^{\ell+2,p}_{1,N}$ is a Banach algebra for $\ell\ge 0$, 
the map ${\mathcal I}^{\ell+2,p}_{1,N}\to W^{\ell+2,p}_{\gamma_N+2}$, $u\mapsto\psi u^3$, $\psi\in\Sz$, is bounded.
This shows that we can iterate this process and conclude from \eqref{eq:properness_relation} that $u_j$ is bounded in 
${\mathcal I}^{m+2,p}_{1,N}$ uniformly in $j\ge 1$. Note that the map
\[
{\mathcal I}^{m+2,p}_{1,N}\to W^{m,p}_{\gamma_N+2},\quad u\mapsto\psi u^3,
\]
is a compact nonlinear map since it can be decomposed as 
${\mathcal I}^{m+2,p}_{1,N}\to W^{m+2,p}_{\gamma_N+3}\subseteq W^{m,p}_{\gamma_N+2}$
that is a composition of a bounded non-linear map and a compact embedding (see \eqref{W-compactness}).
This together with the uniform boundedness of $u_j$ in ${\mathcal I}^{m+2,p}_{1,N}$ implies that there exist a subsequence 
$\big(u_{j_k}\big)_{k\ge 1}$ of $(u_j)_{j\ge 1}$ and $v\in W^{m,p}_{\gamma_N+2}$ such that $\psi u_{j_k}^3\to v$ in 
$W^{m,p}_{\gamma_N+2}$ as $k\to\infty$. 
We then conclude from $\Delta u_{j_k}=\psi u_{j_k}^3-f_{j_k}$ and the fact that \eqref{Laplacian-I} is an isomorphism that there exists 
$u\in{\mathcal I}^{m+2,p}_{1,N}$ such that
\[
u_{j_k}\to u\quad \text{in}\quad{\mathcal I}^{m+2,p}_{1,N} \ 
\text{\rm as}\  k\to\infty.
\]
Hence, the map \eqref{eq:FF} is proper, and therefore, it is onto. Moreover, by the Hadamard-Levi theorem 
(see e.g.\ \cite[Theorem 2.5.17, Remark (i)]{AMR}), \eqref{eq:FF} is a diffeomorphism, so for every $\varphi\in\Sz$, there is a unique 
$u_*\in {\mathcal I}^{m+2,p}_{1,N}$ satisfying $\F(u_*)=\varphi$.
Finally, we need to confirm that $u_*\in\AH^{m,p}_{1,N}$. Notice that $p>3$ implies that we can take $N^*=N$ in 
\eqref{def:N*}. In fact, $p>3$ also implies that we can choose $\gamma_0\geq 0$ in our definition of $\A^{m,p}_{1,N}$  
(see \eqref{eq:gamma_range} above and Definition \ref{def:W,A}(b) in Appendix \ref{sec:appendix_properties}), 
so $\gamma_N=\gamma_0+N\ge N$, which implies $W^{m,p}_{\gamma_N}\subseteq H^{m,p}_N$. 
Putting these together, we see that $\A^{m,p}_{1,N}\subseteq \AH^{m,p}_{1,N}$, and hence  $u_*\in\AH^{m,p}_{1,N}$. 
This proves the first statement of the theorem. 

\medskip

\noindent{\em Non-vanishing of the asymptotic coefficients:}
Let us now prove the second statement of Theorem \ref{th:semilinear-equilibrium}. 
It follows from the first statement that for any given $m,N\ge 1$ and $\ell\ge 0$ the map
\begin{equation}\label{eq:FFell}
\F : {\mathcal I}^{m+\ell+2,p}_{1,N+\ell}\to W^{m+\ell,p}_{\gamma_N+\ell+2}
\end{equation} 
is a diffeomorphism. Consider the sets
\[
\mathcal{Z}^\ell_k:=\big\{u\in{\mathcal I}^{m+\ell+2,p}_{1,N+\ell}\,:\,a_k=0\big\},\quad  1\le k\le N.
\]
It follows from the definition of the space ${\mathcal I}^{m+\ell+2,p}_{1,N+\ell}$ that for any given $1\le k\le N$ the set 
$\mathcal{Z}^\ell_k$ is a closed subspace of co-dimension equal to $\dim\mathcal{H}_{k-1}\ge 1$. 
Since \eqref{eq:FFell} is a diffeomorphism, we then conclude that for any $1\le k\le N$ the set $\F\big(\mathcal{Z}^\ell_k\big)$ is 
a submanifold of finite co-dimension in $W^{m+\ell+2,p}_{\gamma_N+\ell+2}$. This shows that for any $\ell\ge 0$ the set
\[
W^{m+\ell,p}_{\gamma_N+\ell+2}\setminus\Big(\bigcup_{k=1}^N\F\big(\mathcal{Z}^\ell_k\big)\Big)
\]
is open and dense in $W^{m+\ell,p}_{\gamma_N+\ell+2}$. The second statement of the theorem now follows from Baire's theorem,
the completeness of the Schwartz space $\Sz$, as well as the fact that $\Sz$ is dense in $W^{m+\ell,p}_{\gamma_N+2+\ell}$, $\ell\ge 0$, 
and that the Fr\'echet topology on $\Sz$ is induced by the norms in $W^{m+\ell,p}_{\gamma_N+2+\ell}$, $\ell\ge 0$.
\end{proof}

Note that asymptotic spaces with log terms (cf.\ \cite{McOwenTopalov2}, \cite{McOwenTopalov3}) can be used to prove a {\em local} version of Theorem \ref{th:semilinear-global} 
and Theorem \ref{th:semilinear-equilibrium} with a nonlinear term of the form $F(u)+\varphi$ where $F(u)$ is an arbitrary 
polynomial of $u$ vanishing at $u=0$ and with $\varphi$ in an open neighborhood of zero in $\Sz(\R^3)$.

\appendix

\section{Background on weighted and asymptotic spaces}\label{sec:appendix_properties}

Let us summarize some of the essential properties of the weighted Sobolev and asymptotic spaces defined in 
Section \ref{sec:introduction}. For proofs of Propositions \ref{pr:H-properties}-\ref{pr:A-products}, see \cite{McOwenTopalov2}.

\begin{Prop}\label{pr:H-properties}  
\begin{enumerate}
\item[(a)] For $m>d/p$ and $\delta\in\R$, if $f\in H^{m,p}_\delta(\R^d)$, then $f\in C^k(\R^d)$ for all $k<m-(d/p)$ and
\[
\sup_{x\in\R^d} \x^\delta|\partial^\alpha f(x)|\le C_{d,m,p,k,\delta}\,\|f\|_{H^{m,p}_\delta} \quad\hbox{for $|\alpha|\le k$}.
\]
In fact, for all $|\alpha|\le k$ we have
$|x|^\delta\,|\partial^\alpha f(x)|\to 0 $  as $|x|\to\infty.$
\item[(b)] If $m>d/p$ and $\delta\ge 0$, then $H^{m,p}_\delta(\R^d)$ is a Banach algebra:
\[
\|fg\|_{H^{m,p}_\delta}\le C\,\|f\|_{H^{m,p}_\delta}\,\|g\|_{H^{m,p}_\delta} \quad\hbox{for} \ f,g\in H^{m,p}_\delta(\R^d).
\]
\end{enumerate}
\end{Prop}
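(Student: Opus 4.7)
The plan is to prove both parts by reducing to classical unweighted results and then accounting for the weight $\x^\delta$.

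For part (a), I would use a localization argument. Covering $\R^d$ by unit balls $B(x_0,1)$, the weight $\x^\delta$ is comparable to $\x_0^\delta$ on each such ball with a uniform constant, so
\[
\|f\|_{H^{m,p}(B(x_0,1))} \le C\,\x_0^{-\delta}\,\|f\|_{H^{m,p}_\delta(B(x_0,1))}.
\]
The classical interior Sobolev embedding $H^{m,p}(B(x_0,1))\hookrightarrow C^k(\overline{B(x_0,1/2)})$ for $k<m-d/p$, with constant independent of $x_0$ by translation invariance, then yields
\[
|\partial^\alpha f(x_0)| \le C\,\x_0^{-\delta}\,\|f\|_{H^{m,p}_\delta(B(x_0,1))}
\]
for $|\alpha|\le k$, from which the stated pointwise bound follows upon multiplying by $\x_0^\delta$. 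The decay $|x|^\delta|\partial^\alpha f(x)|\to 0$ as $|x|\to\infty$ is verified first on $C^\infty_c(\R^d)$ (where it is trivial) and then extended by density, using that $\|f\|_{H^{m,p}_\delta(B(x_0,1))}\to 0$ as $|x_0|\to\infty$ by the $L^p$-integrability of $\x^\delta\partial^\alpha f$.

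For part (b), I would reduce to the classical unweighted Moser inequality $\|fh\|_{H^{m,p}}\le C\|f\|_{H^{m,p}}\|h\|_{H^{m,p}}$ for $m>d/p$. Two elementary observations using $\delta\ge 0$ are the key: the pointwise inequality $\x^\delta\ge 1$ gives $\|h\|_{H^{m,p}}\le\|h\|_{H^{m,p}_\delta}$, and by Lemma \ref{le:J_delta-H} the map $h\mapsto\x^\delta h$ is a bounded isomorphism $H^{m,p}_\delta\to H^{m,p}$ in both directions. The main calculation is then the chain
\[
\|fg\|_{H^{m,p}_\delta} \le C\,\|\x^\delta(fg)\|_{H^{m,p}} = C\,\|f\cdot(\x^\delta g)\|_{H^{m,p}} \le C'\,\|f\|_{H^{m,p}}\,\|\x^\delta g\|_{H^{m,p}} \le C''\,\|f\|_{H^{m,p}_\delta}\,\|g\|_{H^{m,p}_\delta},
\]
where the first and last steps use Lemma \ref{le:J_delta-H} together with $\|f\|_{H^{m,p}}\le\|f\|_{H^{m,p}_\delta}$, and the middle inequality is Moser's classical algebra estimate applied to $f$ and $\x^\delta g$. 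The first step, one verifies the estimate on the dense subspace $C^\infty_c(\R^d)$ and then extends by continuity.

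The main obstacle is hidden inside the unweighted Moser inequality, whose proof requires Gagliardo--Nirenberg interpolation to treat the ``middle'' Leibniz terms $\partial^\beta f\cdot\partial^{\alpha-\beta}h$ with $|\beta|\in[m-d/p,d/p]$; this range is non-empty precisely when $d/p<m\le 2d/p$, and for such $\beta$ neither factor admits a direct Sobolev embedding into $L^\infty$. The standard remedy pairs the two factors in $L^{q_1},L^{q_2}$ with $1/q_1+1/q_2=1/p$ via Gagliardo--Nirenberg. Since this unweighted estimate is entirely classical, the novelty of part (b) is confined to the two algebraic observations about the weight $\x^\delta$ in the reduction.
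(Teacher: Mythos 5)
Your proof is correct on both counts. The paper itself does not prove Proposition~\ref{pr:H-properties} but cites \cite{McOwenTopalov2}; your arguments are a valid self-contained replacement and in both parts cleanly reduce the weighted statement to the corresponding unweighted one.

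For (a), the localization is the standard approach: on $B(x_0,1)$ the weight $\langle x\rangle^\delta$ is comparable to $\langle x_0\rangle^\delta$ uniformly (by Lemma~\ref{Le:elementary}), so the translation-invariant interior Sobolev/Morrey embedding gives the pointwise bound, and the decay follows because the norm over $B(x_0,1)$ tends to zero as $|x_0|\to\infty$ by the absolute continuity of the $L^p$ integrals; the density-plus-uniform-bound argument you sketch is an equivalent formulation. For (b), conjugating through $J_\delta$ from Lemma~\ref{le:J_delta-H} and combining with $\|f\|_{H^{m,p}}\le\|f\|_{H^{m,p}_\delta}$ (valid precisely because $\delta\ge 0$, which is where the hypothesis enters) reduces the weighted algebra estimate to the classical unweighted $H^{m,p}$ Moser estimate for $m>d/p$. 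This is a genuinely efficient route that exploits structure already established in the paper. The only caveat worth noting is the one you yourself flag: the argument is only as elementary as the unweighted Moser inequality that it cites, and in the borderline range $d/p<m\le 2d/p$ that input requires the Gagliardo--Nirenberg interpolation to handle the middle Leibniz terms. Since you acknowledge this dependence rather than glossing over it, the proposal stands.
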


\begin{Rem}
If $X$ is a Banach algebra and $\gamma$ is a positive integer, then it is easy to see that $F(u):=u^\gamma$ defines an analytic 
map $X\to X$. This fact is used with $X=H^{m,p}_\delta(\R^d)$ in the proofs of Proposition \ref{pr:semilinear-global-Lp} and 
Theorem \ref{th:globalexistence-m+2}.
\end{Rem}

\begin{Prop}\label{pr:A-properties}  
\begin{enumerate}
\item[(a)] If $n_1\ge n$ and $N_1\ge N$, then  ${\AH}_{n_1,N_1}^{m,p}\subseteq {\AH}_{n,N}^{m,p}$ is 
a continuous inclusion.
\item[(b)]  If $m\ge 1$,  then $u\mapsto \partial u/\partial x_j$ is a bounded linear map 
${\AH}_{n,N}^{m,p}\to {\AH}_{n+1,N}^{m-1,p}$.
\item[(c)] Multiplication by $\chi(r)/r^k$ is bounded 
${\AH}_{n,N}^{m,p}\!\to\! {\AH}_{n+k,N+k}^{m,p}$.
\item[(d)] Assume $m>d/p$. If  $u\in {\AH}_{n,N}^{m,p}$, then
\begin{equation}\label{AH-derivatives}
\sup_{x\in\R^d} \x^n \, |\partial^\alpha u(x)|\le C\,\|u\|_{{\AH}_{n,N}^{m,p}} \quad\hbox{for}\ |\alpha|<m-d/p.
\end{equation}
\item[(e)] If $m>d/p$, then ${\AH}_{n,N}^{m,p}$ is a Banach algebra:
\begin{equation}\label{est:BanachAlgebra}
\|u\,v\|_{{\AH}_{n, N}^{m,p} }\le C\,\|u\|_{{\AH}_{n,N}^{m,p} }\,\|v\|_{{\AH}_{n,N}^{m,p} }
\quad\hbox{for}\ u,v\in {\AH}_{n,N}^{m,p}.
\end{equation}
\end{enumerate}
\end{Prop}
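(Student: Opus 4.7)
My plan is to prove the five parts in order, using as black boxes Proposition~\ref{pr:H-properties} (embeddings and Banach-algebra property of $H^{m,p}_\delta$), the Sobolev embedding $H^{l,p}(\s^{d-1})\hookrightarrow L^\infty$ for $l>(d-1)/p$, and the sharp defining inequality $N-1<N^*-d/p\le N$ which governs exactly when $\chi(r)/r^k$ belongs to $H^{m,p}_N$. Each step amounts to decomposing an element of $\AH^{m,p}_{n,N}$ into its asymptotic part and remainder, processing each summand separately, and re-packaging any overshoot into the remainder space.

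Parts (a)--(d) are bookkeeping. For (a), $H^{m,p}_{N_1}\hookrightarrow H^{m,p}_N$ is immediate from $\x^N\le\x^{N_1}$, and the excess asymptotic terms $\chi b_k/r^k$ with $N^*<k\le N_1^*$ are absorbed into the new remainder because $\chi/r^k\in H^{m,p}_N$ for such $k$ and the $b_k$ are bounded on $\s^{d-1}$ by sphere-Sobolev. For (b), I differentiate term-by-term using $\partial_jr=x_j/r$ and $\partial_j\theta=O(1/r)$: each $\partial_j(\chi b_k/r^k)$ produces a contribution of order $1/r^{k+1}$ whose coefficient involves $b_k$ and $\nabla_Sb_k$ and therefore lies in $H^{m+N^*-k,p}(\s^{d-1})=H^{(m-1)+1+N^*-(k+1),p}(\s^{d-1})$, precisely the regularity required at level $k+1$ in the target; the contribution at level $N^*+1$ is absorbed into the remainder via $\chi/r^{N^*+1}\in H^{m-1,p}_N$, and $\partial_jf\in H^{m-1,p}_N$ for the remainder itself. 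For (c), multiplication by $\chi/r^k$ shifts every expansion index by $k$, and since $(N+k)^*=N^*+k$ the regularity hierarchy matches automatically; the remainder $(\chi/r^k)f\in H^{m,p}_{N+k}$ follows from Leibniz using $|\partial^\alpha(\chi/r^k)|\lesssim\x^{-k}$. Part (d) is then a direct application: sphere-Sobolev gives $|\partial^\alpha(\chi b_k(\theta)/r^k)|\lesssim\x^{-k}\le\x^{-n}$ for $k\ge n$, while Proposition~\ref{pr:H-properties}(a) controls the remainder by $\x^{-N}\le\x^{-n}$.

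The genuine content lies in (e). Writing $u=b+f$ and $v=b'+f'$ I expand $uv=bb'+bf'+fb'+ff'$: the piece $ff'$ lies in $H^{m,p}_N$ by Proposition~\ref{pr:H-properties}(b); the cross terms $bf'$, $fb'$ are handled by viewing multiplication by $b$ as a bounded operator on $H^{m,p}_N$, since part~(d) yields uniform $L^\infty$ bounds on $\partial^\beta b$ for $|\beta|\le m$ and Leibniz then controls $\|\x^N\partial^\alpha(bf')\|_{L^p}$. The genuinely delicate piece is $bb'$, which formally equals $\chi^2\sum_jc_j(\theta)/r^j$ with $c_j=\sum_{i+k=j}b_ib'_k$; I replace $\chi^2$ by $\chi$ modulo a compactly supported error absorbed into the remainder, retain the levels $j\le N^*$ as the new asymptotic expansion, and push the tail $j>N^*$ into the remainder via $\chi/r^j\in H^{m,p}_N$. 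The nontrivial verification is $c_j\in H^{m+1+N^*-j,p}(\s^{d-1})$: I use that this space is a Banach algebra (since $m+1+N^*-j\ge m+1>(d-1)/p$) combined with the sphere-Sobolev embeddings $H^{m+1+N^*-i,p}\hookrightarrow H^{m+1+N^*-j,p}$ for $i\le j$, applied to each factor in $b_ib'_k$. The principal obstacle throughout is exactly this bookkeeping in (e): the regularity hierarchy $H^{m+1+N^*-k,p}(\s^{d-1})$, the weight threshold $N-1<N^*-d/p\le N$, and the algebra threshold $m>d/p$ are calibrated to one another so that every emerging coefficient lands at the correct rung and every overshoot genuinely belongs to the weighted remainder space.
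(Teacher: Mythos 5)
First, a bookkeeping note: the paper does not supply a proof of Proposition~\ref{pr:A-properties}; it refers the reader to \cite{McOwenTopalov2}. So there is no in-text proof to compare against, and I will assess your argument on its own terms. Your treatment of (a), (c), (d) is sound, and for (b) the idea is right — only note that $(m-1)+1+N^*-(k+1)=m+N^*-k-1$, so the relation you write as an equality, $H^{m+N^*-k,p}(\s^{d-1})=H^{(m-1)+1+N^*-(k+1),p}(\s^{d-1})$, is actually a strict containment $H^{m+N^*-k,p}\subset H^{m+N^*-k-1,p}$; the conclusion holds, just with one derivative to spare. Your treatment of the $bb'$ and $ff'$ pieces of (e) is also correct: the algebra bookkeeping for the sphere coefficients and the appeal to Proposition~\ref{pr:H-properties}(b) are both in order.

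There is, however, a genuine gap in your argument for the cross terms $bf'$ and $fb'$. You claim that ``part~(d) yields uniform $L^\infty$ bounds on $\partial^\beta b$ for $|\beta|\le m$,'' but part~(d) only gives such bounds for $|\alpha|<m-d/p$, which is strictly less than $m$. Even bypassing (d) and using the full sphere-Sobolev regularity of the coefficients directly, the worst case $b_{N^*}\in H^{m+1,p}(\s^{d-1})$ embeds into $C^j(\s^{d-1})$ only for $j<m+1-(d-1)/p$; when $p\le d-1$ this threshold drops to $j\le m-1$ or lower, so $\partial^\beta b$ need not be bounded for $|\beta|$ near $m$. Consequently the uniform Leibniz-plus-$L^\infty$ argument you sketch does not close. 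The correct argument must split on $|\beta|$: for $|\beta|$ small one uses $\|\partial^\beta b\|_{L^\infty}$ and puts the weight on $\partial^{\alpha-\beta}f'$; for $|\beta|$ large one instead exploits that $\partial^\beta(\chi b_k/r^k)$ decays like $\x^{-k-|\beta|}$, so that $\x^N\partial^\beta b\in L^p$ while $\partial^{\alpha-\beta}f'\in L^\infty$ by Sobolev embedding (here $|\alpha-\beta|<m-d/p$); and in the range of $|\beta|$ where neither option applies one needs intermediate $L^q\text{-}L^{q'}$ H\"older estimates obtained from fractional Sobolev embeddings on $\s^{d-1}$ and on $\R^d$. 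This balancing of decay against sphere regularity is precisely where the thresholds $m>d/p$ and \eqref{def:N*} enter nontrivially, and it is the part of (e) your sketch does not address.
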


\noindent The property (e) in Proposition \ref{pr:A-properties} is actually a special case of a more general result:

\begin{Prop}\label{pr:A-products} 
For $m>d/p$ and $0\le n_i\le N_i$ for $i=1,2$,  let $\widetilde n=n_1+n_2$ and $\widetilde N=\min(N_1+n_2,N_2+n_1)$. Then
\begin{equation}\label{est:AH-multiplication}
\|u\,v\|_{{\AH}_{\widetilde n,\widetilde N}^{m,p} }\le C\,\|u\|_{{\AH}_{n_1,N_1}^{m,p} }\|v\|_{{\AH}_{n_2,N_2}^{m,p}}
\quad\hbox{for}\ u\in {\AH}_{n_1,N_1}^{m,p},\ v\in {\AH}_{n_2,N_2}^{m,p}\,.
\end{equation}
\end{Prop}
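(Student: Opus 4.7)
My plan is to write $u = b^u + f^u$ and $v = b^v + f^v$ in the form \eqref{AH-expansion1}, expand
\[
uv \;=\; b^u b^v \;+\; b^u f^v \;+\; f^u b^v \;+\; f^u f^v,
\]
extract from the asymptotic-asymptotic product $b^u b^v$ a new expansion starting at order $\widetilde n = n_1+n_2$, and absorb every remaining piece into $H^{m,p}_{\widetilde N}$. The crucial combinatorial preliminary is that $N^* - N$ depends only on $d$ and $p$: the constraint $N^* - d/p \in (N-1, N]$ pins down $N^* - N$ as the unique integer in $(d/p-1, d/p]$. Hence $N_1^* - N_1 = N_2^* - N_2 = \widetilde N^* - \widetilde N$, and since $\widetilde N = \min(N_1+n_2, N_2+n_1)$ we obtain
\[
\widetilde N^* \;\le\; \min\bigl(N_1^* + n_2,\; N_2^* + n_1\bigr).
\]

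For the asymptotic-asymptotic product I would expand
\[
b^u b^v \;=\; \chi(r)^2 \sum_{k,j}\frac{u_k(\theta)\,v_j(\theta)}{r^{k+j}}, \qquad n_1\le k\le N_1^*,\ n_2\le j\le N_2^*,
\]
and reorganize by $\ell = k+j$. The difference $\chi^2 - \chi$ has compact support and contributes a harmless smooth term to the remainder. For the retained part, I use the bilinear estimate on $\s^{d-1}$: if $s_1, s_2 \ge 0$ and $s_1 + s_2 > s + (d-1)/p$, then
\[
\|u_k v_j\|_{H^{s,p}(\s^{d-1})} \;\le\; C\,\|u_k\|_{H^{s_1,p}(\s^{d-1})}\,\|v_j\|_{H^{s_2,p}(\s^{d-1})}.
\]
Taking $s_1 = m+1+N_1^*-k$, $s_2 = m+1+N_2^*-j$, and $s = m+1+\widetilde N^*-\ell$ for $\ell \le \widetilde N^*$, the inequality $s \le \min(s_1,s_2)$ is exactly the combinatorial identity above (using $k\ge n_1$, $j\ge n_2$), while $s_1+s_2 - s = m+1+(N_1^*+N_2^*-\widetilde N^*) > (d-1)/p$ follows from $m > d/p$ and $N_1^*+N_2^* \ge \widetilde N^*$. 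For pairs with $k+j > \widetilde N^*$, the relation $\widetilde N^* + 1 > \widetilde N + d/p$ (coming from the defining constraint on $\widetilde N^*$) guarantees $\chi(r)/r^{k+j} \in H^{m,p}_{\widetilde N}$, and the Banach algebra property on $\s^{d-1}$ (valid since $m+1 > (d-1)/p$) absorbs $u_k v_j$ into the remainder.

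For the cross term $b^u f^v$ I would treat each summand $\chi(r) u_k(\theta) f^v / r^k$ separately: multiplication by $\chi/r^k$ is bounded $H^{m,p}_{N_2} \to H^{m,p}_{N_2+k}$ (an $H^{m,p}$-analogue of Proposition \ref{pr:A-properties}(c), immediate from \eqref{eq:H-norm} via Leibniz using $|\partial^\beta(\chi/r^k)| \le C r^{-k-|\beta|}$), and multiplication by $u_k(\theta)$ (extended smoothly into $r\ge 1$) preserves $H^{m,p}_{N_2+k}$ at cost of $\|u_k\|_{H^{m+1+N_1^*-k,p}(\s^{d-1})}$, using $m>d/p$ to bound derivatives $\partial^\beta u_k(\theta)$ up to order $m$ in sup-norm. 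Since $N_2 + k \ge N_2 + n_1 \ge \widetilde N$ we land in $H^{m,p}_{\widetilde N}$; the term $f^u b^v$ is symmetric. For $f^u f^v$ I apply the weighted version of Proposition \ref{pr:H-properties}(b), $H^{m,p}_{N_1}\cdot H^{m,p}_{N_2} \hookrightarrow H^{m,p}_{N_1+N_2} \subseteq H^{m,p}_{\widetilde N}$.

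The main obstacle, as I see it, is the simultaneous bookkeeping of three indices --- the spherical regularity of each asymptotic coefficient, the weighted decay of each leftover remainder piece, and the definition of $\AH^{m,p}_{\widetilde n, \widetilde N}$ which couples them via $\widetilde N^*$. The clean identity that $N^* - N$ is constant in $N$ (for fixed $d,p$) is what makes the top-order coefficients with $k+j$ close to $\widetilde N^*$ land in the correct spherical regularity space; without that observation one would be forced into a case analysis depending on whether $\widetilde N^*$ exceeds $N_1^* + n_2$ or $N_2^* + n_1$, and the estimate could fail.
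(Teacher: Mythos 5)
The paper cites \cite{McOwenTopalov2} for this result rather than proving it here, so I cannot compare you to the paper's own argument. Your overall structure is sound and the observation that $N^* - N$ is the unique integer in $(d/p-1,\,d/p]$, hence constant across the three spaces, is exactly the right pivot for the spherical-regularity bookkeeping; your verification that the bilinear Sobolev estimate on $\s^{d-1}$ applies to every coefficient product $u_k v_j$ with $k+j\le\widetilde N^*$ is also correct.

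The gap is in your treatment of the cross terms $b^u f^v$ (and symmetrically $f^u b^v$). You propose to control multiplication by $u_k(\theta)$ (0-homogeneously extended) by ``using $m>d/p$ to bound derivatives $\partial^\beta u_k(\theta)$ up to order $m$ in sup-norm.'' Bounding all derivatives of order $\le m$ of $u_k$ in $L^\infty(\s^{d-1})$ requires $u_k\in C^m(\s^{d-1})$, which via Sobolev embedding on the $(d-1)$-manifold needs $u_k\in H^{s,p}(\s^{d-1})$ with $s>m+(d-1)/p$. You have $u_k\in H^{m+1+N_1^*-k,p}(\s^{d-1})$, so the requirement becomes $1+N_1^*-k>(d-1)/p$. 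For $k\le N_1$ this holds (since $N_1^*-N_1>d/p-1$), but for the trailing coefficients with $N_1<k\le N_1^*$ it can fail; in particular for $k=N_1^*$ you need $1>(d-1)/p$, i.e.\ $p>d-1$. The Proposition carries no such restriction on $p$, so your mechanism breaks down whenever $p\le d-1$ (for example $d=5$, $p=2$).

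The extra decay $r^{-(k-n_1)}$ available precisely for those trailing $k$ is the correct ingredient to rescue the estimate, but exchanging that decay for the missing angular regularity requires a Moser-type product inequality in which the top-order derivatives of $\chi(r)u_k(\theta)/r^k$ land in $L^p$ with a weight strictly below $k-d/p$ while they are paired with the $L^\infty$-type bound $f^v=o(r^{-N_2})$ from Proposition \ref{pr:H-properties}(a), rather than with the $L^p_{N_2}$ norm. The naive Leibniz-plus-sup-norm route you sketch does not do this trade, and the worst case $k=n_1$, $\widetilde N=N_2+n_1$ (attained when $n_2=N_2$) leaves no weight headroom at all, so the repair must instead exploit the higher spherical regularity of $u_{n_1}$. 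In short, the idea is right but the cross-term step needs a sharper multiplication lemma than the one you state, and the borderline cases are exactly where the argument as written would fail.
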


We use in Sections \ref{sec:application-nonlinear} a different family of weighted Sobolev and asymptotic 
spaces denoted $W^{m,p}_\delta$ and $\A^{m,p}_{N}$.  These spaces were introduced in \cite{McOwenTopalov2} and used in 
\cite{McOwenTopalov3,McOwenTopalov4} to study the Euler equations because the spaces $W^{m,p}_\delta$ have nice 
mapping properties for the Laplacian. On the other hand, the space $H^{m,p}_\delta$ has better mapping properties for 
the resolvent of the Laplacian, which is why we have used $H^{m,p}_\delta$ and $\AH^{m,p}_{N}$ predominantly in this paper.

\begin{Def} \label{def:W,A}
Let $m\in\Z_{\ge 0}$ and $1\le p<\infty$.
\begin{enumerate}
\item[\rm(a)] For $\delta\in \R$, define the weighted Sobolev space $W_\delta^{m,p}(\R^d)$ to be 
the closure of $C^\infty_c(\R^d)$ in the norm
\begin{equation}\label{eq:W-norm}
\|f\|_{W^{m,p}_\delta}=\sum_{|\alpha|\le m} \|\x^{\delta+|\alpha|}\partial^\alpha f\|_{L^p}.
\end{equation}
\item[\rm(b)] Fix $\gamma_0$ satisfying $0<\gamma_0+d/p<1$ and for $N\in \Z_{\ge 0}$ let $\gamma_N=\gamma_0+N$. 
Define $\A^{m,p}_N$ to be the space of functions of the form 
\begin{equation} \label{A-asymptotics}
v(x)=\chi(r)\left(a_0(\theta)+\cdots + \frac{a_{N}(\theta)}{r^{N}}\right)+f(x),
\end{equation}
where $a_k\in H^{m+1+N-k,p}(\s^{d-1})$ and $f\in W^{m,p}_{\gamma_N}(\R^d)$.
The function space $\A^{m,p}_{N}$ becomes a Banach space under the norm
\begin{equation}\label{def:A-norm}
\|v\|_{{\A}_{N}^{m,p}}=
\sum_{k=0}^{ N} \|a_k\|_{H^{m+1+N-k,p}} + \| f\|_{W_{\gamma_N}^{m,p}}.
\end{equation}
We denote by $\A^{m,p}_{n,N}$ the closed subspace for which $a_0=\cdots a_{n-1}=0$.
\end{enumerate}
\end{Def}
\noindent
These function spaces enjoy properties analogous to those in Propositions \ref{pr:H-properties}, \ref{pr:A-properties}, 
and \ref{pr:A-products}; for details, see \cite{McOwenTopalov2}. In particular, if $m>d/p$, then $f\in W^{m,p}_{\gamma_N}$ 
implies that $f(x)=o(r^{-N})$ as $r\to\infty$, so we generally make this assumption when dealing with 
$\A^{m,p}_{N}$ (as we did for $\AH^{m,p}_{N}$).

\section{Auxiliary lemmas and discussion}\label{sec:aux-lemmas}
The following lemma is well known. We provide the proof for convenience.

\begin{Lem}\label{Le:elementary}
For any $\delta\in\R$ there is a constant $C=C(\delta)=2^{|\delta|/2}$ such that
\begin{equation}\label{eq:elementary}
\frac{\x^\delta}{\y^\delta}\le C\langle x-y\rangle^{|\delta|}\quad \hbox{for all}\ x,y\in\R^d.
\end{equation}
\end{Lem}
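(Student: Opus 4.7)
The plan is to reduce the lemma to a single elementary inequality of Peetre type, namely
\[
\langle x\rangle \le \sqrt{2}\,\langle x-y\rangle\,\langle y\rangle\quad\text{for all}\ x,y\in\R^d,
\]
and then handle the sign of $\delta$ by a symmetry argument. The key algebraic step will be to expand
\[
1+|x|^2 \le 1 + (|x-y|+|y|)^2 \le 1 + 2|x-y|^2 + 2|y|^2 \le 2\,(1+|x-y|^2)(1+|y|^2),
\]
where the first inequality is the triangle inequality, the second is $(a+b)^2 \le 2a^2+2b^2$, and the last follows from $1+a+b \le (1+a)(1+b)$ for $a,b\ge 0$. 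Taking square roots gives the Peetre inequality above.

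First I would treat $\delta\ge 0$: raising both sides of the Peetre inequality to the $\delta$-th power and dividing by $\langle y\rangle^\delta$ yields
\[
\frac{\langle x\rangle^\delta}{\langle y\rangle^\delta}\le 2^{\delta/2}\,\langle x-y\rangle^\delta,
\]
which is \eqref{eq:elementary} with $C=2^{|\delta|/2}$. For $\delta<0$, I would simply swap the roles of $x$ and $y$: writing $\delta=-|\delta|$,
\[
\frac{\langle x\rangle^\delta}{\langle y\rangle^\delta} = \frac{\langle y\rangle^{|\delta|}}{\langle x\rangle^{|\delta|}}
\le 2^{|\delta|/2}\,\langle y-x\rangle^{|\delta|} = 2^{|\delta|/2}\,\langle x-y\rangle^{|\delta|},
\]
using the previous case applied with $\delta$ replaced by $|\delta|$ and $(x,y)$ replaced by $(y,x)$, together with the symmetry $\langle y-x\rangle=\langle x-y\rangle$.

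There is essentially no obstacle here; the only thing to be careful about is packaging the two cases (positive and negative $\delta$) into a single statement with the uniform constant $2^{|\delta|/2}$, which is automatic from the symmetry argument above.
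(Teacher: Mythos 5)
Your proof is correct and follows essentially the same route as the paper's: establish the Peetre-type inequality $1+|x|^2\le 2(1+|x-y|^2)(1+|y|^2)$, raise to the $\delta$-th power for $\delta\ge 0$, and handle $\delta<0$ by swapping the roles of $x$ and $y$. The paper states the Peetre inequality without spelling out the intermediate algebra, but the argument is the same.
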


\begin{proof}
By the triangle inequality, we have $1+|y+z|^2\le 2(1+|y|^2)(1+|z|^2)$ for all $y,z\in\R^d$. If $\delta>0$, this implies
$\langle y+z\rangle^\delta\le 2^{\delta/2}\y^\delta \z^\delta$, and we let $z=x-y$ to obtain \eqref{eq:elementary}.
If $\delta<0$, the triangle inequality with $x$ in place of $y$ implies 
$\langle x+z\rangle^{-\delta}\le 2^{-\delta/2}\x^{-\delta}\z^{-\delta}$, 
and we let $z=y-x$ to obtain \eqref{eq:elementary}.
\end{proof}

\begin{Lem}\label{Le:apriori-est-1}
For any $\delta\in\R$ and $1<p<\infty$, there is a constant $C>0$ so that
\begin{equation}\label{est:apriori1}
\|g\|_{H_\delta^{2,p}}\le C\left(\|\Lap g\|_{L_\delta^{p}}+\|g\|_{L_\delta^{p}}\right)
\quad\hbox{for $g\in H^{2,p}_\delta(\R^d)$.}
\end{equation}
Moreover, if $g\in L^p_\delta(\R^d)$ has distributional derivaties satisfying $\Lap g\in L^p_\delta(\R^d)$, then 
$g\in H^{2,p}_\delta(\R^d)$.
\end{Lem}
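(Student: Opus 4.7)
The plan has two parts, matching the two statements of the lemma.

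For the a priori estimate \eqref{est:apriori1}, I would conjugate by the weight isomorphism $J_\delta$ from Lemma \ref{le:J_delta-H} to reduce to the classical unweighted elliptic estimate
\[
\|u\|_{H^{2,p}}\le C\big(\|\Lap u\|_{L^p}+\|u\|_{L^p}\big)\quad\text{for all } u\in H^{2,p}(\R^d),
\]
which is valid for $1<p<\infty$ by Calder\'on--Zygmund theory. Given $g\in H^{2,p}_\delta$, set $u:=J_\delta g=\x^\delta g\in H^{2,p}$; a direct computation gives
\[
\Lap u=J_\delta\Lap g+2\,\nabla\x^\delta\cdot\nabla g+(\Lap\x^\delta)\,g,
\]
and since $|\partial^\beta\x^\delta|\le C\,\x^{\delta-|\beta|}$ for all multi-indices $\beta$, the two extra terms are controlled in $L^p$ by $C\,\|g\|_{H^{1,p}_\delta}$. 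Applying the classical estimate to $u$ and translating back via Lemma \ref{le:J_delta-H} then yields
\[
\|g\|_{H^{2,p}_\delta}\le C\,\big(\|\Lap g\|_{L^p_\delta}+\|g\|_{H^{1,p}_\delta}\big).
\]
The standard interpolation $\|g\|_{H^{1,p}_\delta}\le\epsilon\,\|g\|_{H^{2,p}_\delta}+C_\epsilon\,\|g\|_{L^p_\delta}$, again obtained by conjugating the corresponding $\R^d$ interpolation through $J_\delta$, permits absorption of the middle term and gives \eqref{est:apriori1}.

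For the regularity statement, the plan is to apply part one to a mollification of $g$ and pass to a weak limit. Let $\rho_\epsilon$ be a standard smooth mollifier supported in the ball of radius $\epsilon$ and set $g_\epsilon:=g*\rho_\epsilon$. Using Lemma \ref{Le:elementary} to bound $\x^\delta/\y^\delta\le C\,\langle x-y\rangle^{|\delta|}$ together with the compact support of $\rho_\epsilon$, Young's inequality shows that $g_\epsilon\in C^\infty(\R^d)\cap H^{k,p}_\delta(\R^d)$ for every $k\ge 0$; in particular $g_\epsilon\in H^{2,p}_\delta$, so part one applies. Since $\Lap$ commutes with convolution, $\Lap g_\epsilon=(\Lap g)*\rho_\epsilon\in L^p_\delta$ as well, and \eqref{est:apriori1} yields
\[
\|g_\epsilon\|_{H^{2,p}_\delta}\le C\,\big(\|(\Lap g)*\rho_\epsilon\|_{L^p_\delta}+\|g*\rho_\epsilon\|_{L^p_\delta}\big).
\]
Standard mollification together with continuity of translation on $L^p_\delta$ (verified by density of $C^\infty_c$ and the uniform boundedness of $\tau_y:L^p_\delta\to L^p_\delta$ for $|y|\le 1$) shows that $g*\rho_\epsilon\to g$ and $(\Lap g)*\rho_\epsilon\to\Lap g$ in $L^p_\delta$ as $\epsilon\to 0$. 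Hence $\{g_\epsilon\}$ is uniformly bounded in the reflexive Banach space $H^{2,p}_\delta$ (using $1<p<\infty$), so a subsequence converges weakly to some $\tilde g\in H^{2,p}_\delta$; since $H^{2,p}_\delta\hookrightarrow\Sz'(\R^d)$ continuously and $g_\epsilon\to g$ in $\Sz'$, uniqueness of limits forces $\tilde g=g$, and therefore $g\in H^{2,p}_\delta$.

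The main technical obstacle in part one is the loss of one derivative that arises when distributing $\partial^\alpha$ across the product $\x^\delta g$; recovering it requires the interpolation-and-absorption step, which must itself be derived from the classical $\R^d$ version through $J_\delta$. In part two the corresponding delicate point is the uniform weighted control of the mollifications, which depends on the continuity of translation on $L^p_\delta$ and on the reflexivity needed to extract a weakly convergent subsequence; these are the only places where the restriction $1<p<\infty$ is genuinely used.
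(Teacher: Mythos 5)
Your proof is correct, and it takes a genuinely different route from the paper's. The paper also begins by conjugating with $J_\delta$, but then writes out the full transformed operator ${\cal L}_\delta=J_\delta\Lap J_{-\delta}$ explicitly, observes it is a lower-order perturbation of $\Lap$ with coefficients decaying at infinity, and invokes a pseudodifferential result (Theorem 9.4 of Amann--Hieber--Simonett) to conclude that $({\cal L}_\delta+\mu)^{-1}:L^p\to H^{2,p}$ exists and is bounded for $\mu$ large. That single resolvent bound yields the a priori estimate \eqref{est:apriori1} and the regularity statement simultaneously. You instead keep only the classical Calder\'on--Zygmund inequality $\|u\|_{H^{2,p}}\le C(\|\Lap u\|_{L^p}+\|u\|_{L^p})$, treat the commutator terms from $\Lap J_\delta$ as errors controlled by $\|g\|_{H^{1,p}_\delta}$, and absorb them via the Ehrling-type interpolation transported through $J_\delta$; the regularity half then needs a separate mollification-plus-weak-compactness argument, which you carry out carefully (uniform boundedness of translations on $L^p_\delta$ via Lemma \ref{Le:elementary}, reflexivity of $H^{2,p}_\delta$ for $1<p<\infty$, identification of the weak limit in ${\mathcal S}'$). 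The paper's route is shorter given the cited $L^p$ pseudodifferential bound; yours is more elementary and self-contained, at the cost of the extra interpolation step and the mollification argument. One small point worth making explicit in your write-up: to apply part one to $g_\epsilon$ you need $g_\epsilon$ to lie in $H^{2,p}_\delta$ as defined, i.e.\ in the closure of $C^\infty_c$ in the weighted norm, not merely to have finite norm; for the weights $\x^\delta$ this identification is standard (cutoff and approximate), and the paper uses the same identification implicitly, but it deserves a sentence.
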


\begin{proof} 
Using the isomorphism $J_\delta:H^{m,p}_\delta(\R^d)\to H^{m,p}$ as in Lemma \ref{le:J_delta-H},  
we see that to prove \eqref{est:apriori1} it suffices to show
\begin{equation}\label{est:apriori2}
\|f\|_{H^{2,p}}\le C\left(\|{\cal L}_\delta f\|_{L^p}+\|f\|_{L^p}\right) \quad\hbox{for all $f\in H^{2,p}(\R^d)$},
\end{equation}
where ${\cal L}_\delta:H^{2,p}(\R^d)\to L^p(\R^d)$ is the elliptic operator defined by
\begin{equation}\label{eq:L_delta}
{\cal L}_\delta \, f:=J_\delta\Lap J_{-\delta}\,f
=\Lap f-2\delta\x^{-2}\sum_{j=1}^d x_j\partial_j f+\left(\delta(\delta+2)|x|^2\x^{-4}-\delta d\x^{-2}\right)f.
\end{equation}
Since ${\cal L}_\delta$ is a lower-order perturbation of $\Lap$ (with coefficients vanishing sufficiently fast as $|x|\to\infty$), 
we know by the $L^p$-boundedness of pseudo-differential operators (see e.g.\ Theorem 9.4 in \cite{AmannHieberSimonett}) that 
for sufficiently large $\mu>0$,  $({\cal L}_\delta +\mu)^{-1}:L^p\to H^{2,p}$ is well-defined and bounded. But this means
\begin{equation}\label{est:apriori3}
\| f\|_{H^{2,p}}\le C\,\|({\cal L}_\delta +\mu)f\|_{L^p}
\quad\hbox{for all $f\in H^{2,p}(\R^d)$,}
\end{equation}
from which \eqref{est:apriori2} easily follows. Finally, if $g\in L_\delta^p(\R^d)$ satisfies $\Lap g\in L_\delta^p(\R^d)$,
then $f=J_\delta g\in L^p(\R^d)$ satisfies ${\cal L}_\delta f\in L^p(\R^d)$. Hence 
$h:=({\cal L}_\delta +\mu)f\in L^p(\R^d)$, which implies $f=({\cal L}_\delta+\mu)^{-1}h\in H^{2,p}(\R^d)$, and hence
$g\in H^{2,p}_\delta(\R^d)$, as desired.
\end{proof}

\medskip

By applying \eqref{est:apriori1} to $\partial^\beta g$ for $|\beta|\le m$ with $g\in C^\infty_c(\R^d)$ and then taking the closure 
in the weighted Sobolev norm, we obtain the following

\begin{Coro}\label{Co:apriori-estimates}
For any $m\in\Z_{\ge 0}$, $\delta\in\R$, and $1<p<\infty$, there is a constant $C>0$ so that
\begin{subequations}\label{est:aprior}
\begin{equation}\label{est:aprior2}
\|g\|_{H_\delta^{m+2,p}}\le C\left(\|\Lap g\|_{H_\delta^{m,p}}+\|g\|_{H_\delta^{m,p}}\right)
\quad\hbox{for  $g\in H_\delta^{m+2,p}(\R^d)$.}
\end{equation}
Moreover, if $g\in H^{m,p}_\delta(\R^d)$ and $\Delta g\in H^{m,p}_\delta(\R^d)$ then
$g\in H^{m+2,p}_\delta(\R^d)$.
\end{subequations}
\end{Coro}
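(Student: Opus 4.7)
The plan is a direct bootstrap from the $m=0$ case already established in Lemma \ref{Le:apriori-est-1}, carried out derivative by derivative, exactly as hinted by the authors. First I would work with test functions $g\in C^\infty_c(\R^d)$, which form a dense subspace of $H^{m+2,p}_\delta(\R^d)$ by definition. For each multi-index $\beta$ with $|\beta|\le m$, the function $\partial^\beta g$ is smooth and compactly supported, so it lies in $H^{2,p}_\delta(\R^d)$ and inequality \eqref{est:apriori1} applies to give
\[
\|\partial^\beta g\|_{H^{2,p}_\delta}\le C\big(\|\Lap\partial^\beta g\|_{L^p_\delta}+\|\partial^\beta g\|_{L^p_\delta}\big).
\]
Since $\Lap$ has constant coefficients, $\Lap\partial^\beta g=\partial^\beta\Lap g$.

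Next I would sum this inequality over all $|\beta|\le m$. The right-hand side collapses to $C\big(\|\Lap g\|_{H^{m,p}_\delta}+\|g\|_{H^{m,p}_\delta}\big)$ by the very definition of the weighted Sobolev norm \eqref{eq:H-norm}. The left-hand side equals $\sum_{|\beta|\le m}\sum_{|\alpha|\le 2}\|\partial^{\alpha+\beta}g\|_{L^p_\delta}$, which is equivalent (up to a combinatorial constant depending only on $d$ and $m$) to $\|g\|_{H^{m+2,p}_\delta}$: every multi-index $\gamma$ with $|\gamma|\le m+2$ may be written as $\alpha+\beta$ with $|\alpha|\le 2$ and $|\beta|\le m$, and only finitely many such decompositions exist for each $\gamma$. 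This establishes \eqref{est:aprior2} for $g\in C^\infty_c(\R^d)$, and the estimate then extends by density to all $g\in H^{m+2,p}_\delta(\R^d)$, since both sides are continuous in the $H^{m+2,p}_\delta$ topology.

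For the regularity assertion, suppose $g\in H^{m,p}_\delta(\R^d)$ and $\Lap g\in H^{m,p}_\delta(\R^d)$. For each $|\beta|\le m$, the distributional derivative $\partial^\beta g$ lies in $L^p_\delta(\R^d)$, and in the distributional sense $\Lap(\partial^\beta g)=\partial^\beta(\Lap g)\in L^p_\delta(\R^d)$. Applying the second half of Lemma \ref{Le:apriori-est-1} then promotes each $\partial^\beta g$ to an element of $H^{2,p}_\delta(\R^d)$. Thus all mixed derivatives of $g$ of order $\le m+2$ lie in $L^p_\delta(\R^d)$, and therefore $g\in H^{m+2,p}_\delta(\R^d)$.

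There is no genuine obstacle here, since Lemma \ref{Le:apriori-est-1} already carries the analytic content and the key commutation $[\Lap,\partial^\beta]=0$ is free; the only item requiring a moment of care is the combinatorial equivalence $\sum_{|\beta|\le m}\|\partial^\beta g\|_{H^{2,p}_\delta}\asymp\|g\|_{H^{m+2,p}_\delta}$ used above, and the standard density-and-continuity step needed to pass from $C^\infty_c(\R^d)$ to the full space.
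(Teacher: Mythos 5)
Your proof is correct and is essentially the paper's own argument: the paper's proof of the corollary is the single sentence preceding it, which says exactly what you did — apply \eqref{est:apriori1} to $\partial^\beta g$ for $|\beta|\le m$, use $[\Lap,\partial^\beta]=0$, sum, and pass to the closure by density. The regularity assertion likewise reduces, as you note, to applying the second clause of Lemma \ref{Le:apriori-est-1} derivative by derivative; the only unspoken point (which the paper also leaves implicit) is that having all weak derivatives of order $\le m+2$ in $L^p_\delta$ together with the already-known containments $\partial^\beta g\in H^{2,p}_\delta$ identifies $g$ with an element of the closure $H^{m+2,p}_\delta$, a standard Meyers--Serrin-type fact for these polynomially weighted spaces (or one can see it directly via the conjugation $J_\delta$ of Lemma \ref{le:J_delta-H}, which reduces matters to the unweighted case).
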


\noindent This estimate implies that $\Lap$ considered as an unbounded operator on $H^{m,p}_\delta(\R^d)$ with domain 
$D=H^{m+2,p}_\delta(\R^d)$ is a closed operator.

\medskip\medskip

Let us now discuss Remark \ref{Rem:NotClosed} in Section \ref{sec:asymptotic_spaces}. 
To show that $\Lap$ as an unbounded operator on $\AH_N^{m,p}$ with domain $D=\AH_N^{m+2,p}$ is not closed, 
we will construct $u\in \AH_N^{m,p}\backslash \AH_N^{m+2,p}$ such that $\Lap u\in \AH_N^{m,p}$; 
since $\AH_N^{m+2,p}$ is dense in $\AH_N^{m,p}$, this would enable us to find $u_j\in\AH_N^{m+2,p}$ such that 
$u_j\to u$  in $\AH_N^{m,p}$ and $\Lap u_j\to \Lap u$ in $\AH_N^{m,p}$ but $u\not\in\AH_N^{m+2,p}$. 
In fact, take 
\[
u(x)=\chi\Big(a_0(\theta)+\cdots\frac{a_{N^*}(\theta)}{r^{N^*}}\Big)+f(x)
\]
such that $f\in H^{m+2,p}_N$ and 
\begin{align}
&a_k\in H^{m+3+N^*-k}(S^{d-1}),\quad 0\le k\le N^*-2,\label{eq:subspace(1)}\\
&a_{N^*-1}\in H^{m+3}(S^{d-1})\setminus H^{m+4}(S^{d-1}),\label{eq:subspace(2)}\\
&a_{N^*}\in H^{m+3}(S^{d-1})\,.\label{eq:subspace(3)}
\end{align}
One then easily obtains from \eqref{eq:Lap(u)} that $\Delta u\in\AH_N^{m,p}\,,$
but $u\in\AH_N^{m,p}\setminus\AH_N^{m+2,p}$ since $a_{N^*-1}\notin H^{m+4}(S^{d-1})$.
In fact, let us define a different subspace of $\AH_N^{m,p}$:
\[
\widetilde\AH_N^{m+2,p}:=\Big\{u\in\AH_N^{m,p}\,:\,f\in H^{m+2,p}_N,
\eqref{eq:subspace(1)},\,\eqref{eq:subspace(3)}\,\,\text{hold},\,\,
\eqref{eq:subspace(2)}\,\,\text{is replaced by}\,\,a_{N^*-1}\in H^{m+3}
\Big\}\,.
\] 
Note that $\AH_N^{m+2,p}\subsetneqq\widetilde\AH_N^{m+2,p}$ and,
by arguing as in the proof of Proposition \ref{pr:SonAH}, one  sees that $\widetilde\AH_N^{m+2,p}$ is in the domain
of the generator $\Lambda$ of $\{S(t)\}_{t\ge 0}$ on $\AH_N^{m,p}$. This also shows that the domain of $\Lambda$  
cannot be $\AH_N^{m+2,p}$.

\section{Some basic nonlinear theory}\label{sec:mildsolutions}
The results in this section are fairly standard, but we collect them here and provide some proofs for 
completeness and convenience. Let us consider the equation 
\begin{equation}\label{eq:Lipschitz-parabolic}
\left\{
\begin{array}{l}
u_t=A u + F(t,u), \quad\hbox{for $0<t\le T$,} \\
u|_{t=0}=u_0,
\end{array}
\right.
\end{equation}
on a Banach space $X$. Here we assume $A$ is closed, densely defined and generates a strongly continuous 
semigroup $S(t)$ on $X$ that is quasibounded: for some $\omega>0$ we have
\begin{equation}\label{def:S-quasibounded}
\|S(t)v\|_X \le C\,e^{\omega t}\,\|v\|_X \quad\hbox{for}\ v\in X, \ t>0.
\end{equation}
First consider a {\em mild solution}, i.e.\ a continuous function $u:[0,T]\to X$ that 
satisfies the integral equation
\begin{equation}\label{eq:mild_solution}
u(t)=S(t)u_0+\int_0^t S(t-s)F(s,u(s))\,ds.
\end{equation}
As for conditions on $F$, let us assume that $F : [0,\infty)\times X\to X$ is continuous and {\em globally Lipschitz} on $X$ on 
any finite interval $[0,T]$, $T>0$. In other words, if we fix $T\in (0,\infty)$, then there exists $L\equiv L_T>0$ such that
\begin{equation}\label{global-Lipschitz}
\|F(t,u)-F(t,v)\|_X \le L\,\|u-v\|_X \quad \hbox{for}\ 0\le t\le T, \ u,v\in X.
\end{equation}
We have the following existence and uniqueness result (cf.\ Theorem 1.2 in Chapter 6 of \cite{Pazy}):

\begin{Prop}\label{pr:global-existence-uniqueness} 
If $A$ is a closed, densely defined operator on $X$ that generates a strongly continuous semigroup $S(t)$ satisfying 
\eqref{def:S-quasibounded} and $F:X\to X$ satisfies the globally Lipschitz condition \eqref{global-Lipschitz}, 
then for every $u_0\in X$ and $T>0$, \eqref{eq:Lipschitz-parabolic} admits a unique mild solution $u\in C([0,T],X)$.
\end{Prop}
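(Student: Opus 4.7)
The plan is to set up a contraction mapping argument on the Banach space $C([0,T],X)$. Fix $T>0$ and define $\Phi:C([0,T],X)\to C([0,T],X)$ by
\[
\Phi(u)(t):=S(t)u_0+\int_0^t S(t-s)F(s,u(s))\,ds.
\]
A fixed point of $\Phi$ is, by definition, a mild solution of \eqref{eq:Lipschitz-parabolic}, and conversely. So existence and uniqueness will both follow once $\Phi$ is shown to be a contraction on $C([0,T],X)$ for a suitable equivalent norm.

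First I would check that $\Phi$ really does map $C([0,T],X)$ into itself. The map $t\mapsto S(t)u_0$ is continuous by strong continuity of $\{S(t)\}_{t\ge 0}$. For $u\in C([0,T],X)$, continuity of $F$ implies $s\mapsto F(s,u(s))\in C([0,T],X)$ is bounded, and then the standard argument (splitting $\int_0^{t+h}-\int_0^t$ into a piece where $S(\cdot)$ is applied to a near-constant argument and a piece on a small interval, and using strong continuity together with the bound \eqref{def:S-quasibounded}) shows that $t\mapsto \int_0^t S(t-s)F(s,u(s))\,ds$ is continuous from $[0,T]$ to $X$.

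Next I introduce, for $\beta>\omega$ to be chosen, the weighted norm
\[
\|u\|_\beta:=\sup_{0\le t\le T}e^{-\beta t}\|u(t)\|_X,
\]
which is equivalent to the usual sup norm on $C([0,T],X)$, so $(C([0,T],X),\|\cdot\|_\beta)$ is still a Banach space. For $u,v\in C([0,T],X)$, combining \eqref{def:S-quasibounded} with the global Lipschitz bound \eqref{global-Lipschitz} yields
\[
\|\Phi(u)(t)-\Phi(v)(t)\|_X\le \int_0^t C e^{\omega(t-s)}L\,\|u(s)-v(s)\|_X\,ds.
\]
Multiplying by $e^{-\beta t}$ and bounding $\|u(s)-v(s)\|_X\le e^{\beta s}\|u-v\|_\beta$ gives
\[
e^{-\beta t}\|\Phi(u)(t)-\Phi(v)(t)\|_X\le CL\,\|u-v\|_\beta\int_0^t e^{(\omega-\beta)(t-s)}\,ds\le \frac{CL}{\beta-\omega}\,\|u-v\|_\beta.
\]
Taking the sup over $t\in[0,T]$ and choosing $\beta$ with $CL/(\beta-\omega)<1$ makes $\Phi$ a strict contraction on $(C([0,T],X),\|\cdot\|_\beta)$. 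The Banach fixed-point theorem then produces a unique $u\in C([0,T],X)$ with $\Phi(u)=u$, i.e.\ the unique mild solution of \eqref{eq:Lipschitz-parabolic} on $[0,T]$.

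The only nontrivial step is arranging the contraction, and the weighted norm trick above disposes of it cleanly; no smallness assumption on $T$ is needed, since $\beta$ can simply be taken large. Continuity of the Duhamel integral is routine but needs to be invoked so that $\Phi$ maps the chosen space into itself, which is why I flag it as the one place where care is required.
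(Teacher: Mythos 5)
Your proof is correct and uses essentially the same argument as the paper: a contraction mapping on $C([0,T],X)$ with an exponentially weighted equivalent norm, choosing the weight parameter large enough to make the Lipschitz constant beat $T$-dependence. The only cosmetic difference is that you carry the factor $e^{\omega(t-s)}$ through and absorb it into $\beta-\omega$, whereas the paper simply bounds $\|S(t)\|$ by a $T$-dependent constant before introducing the weight; both yield the same contraction estimate.
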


\proof
Let $E:=C([0,T],X)$ and for $v\in E$ define
\[
\Phi\big(v\big)(t):=S(t)u_0+\int_0^t S(t-s)\,F(s,v(s))\,ds.
\]
We want to show $\Phi$ has a fixed point in $E$. Since $T<\infty$, we obtain from \eqref{def:S-quasibounded} that
there exists $C_T>0$ such that $\|S(t)\|_{X\to X}<C_T$ for $0\le t\le T$. In addition, let us consider the equivalent norm on $E$
\begin{equation}
\|v\|_E:=\sup\{e^{-kt}\,\|u(t)\|_X: 0 \le t\le T\},
\end{equation}
where $k>0$ is to be chosen (sufficiently large). In view of \eqref{global-Lipschitz},
\[
\begin{aligned}
\big\|\Phi\big(v\big)(t)\big\|_X & \le C\,\left(\|u_0\| +\int_0^t \Big(L\|v(s)\|_X+\|F(s,0)\|_X\Big)\,ds\right)\\
&\le C_1 + C_2\,\|v\|_E \quad \hbox{where}\ C_1, C_2 \ \hbox{may depend on $k$ and $T$.}
\end{aligned}
\]
Hence, $\Phi$ gives a well defined map $\Phi : E\to E$. To show $\Phi:E\to E$ is a contraction, we estimate
\[ 
\begin{aligned}
\|\Phi(u)-\Phi(v)\|_E& =  \sup \left\{e^{-kt}\,\left\|\int_0^t S(t-s)(F(s,u)-F(s,v))\,ds\right\|_X:0 \le t\le T\right\} \\
&\le C\,L\, \sup\left\{e^{-kt}\int_0^t e^{ks}\,ds: 0 \le t\le T \right\}\, \|u-v\|_E \\
&\le\,\frac{C\, L}{k}\, \|u-v\|_E.
\end{aligned}
\]
If $k$ is sufficiently large, $\Phi$ is a contraction on $E$. By the contraction mapping theorem, $\Phi:E\to E$ has 
a unique fixed point.
\endproof

Let us now assume that $F : [0,\infty)\times X\to X$ is continuous and that it satisfies the following Lipschitz condition on 
bounded sets of $X$: for any $R,T>0$ there exists $L_{R,T}>0$ such that
\begin{equation}\label{local-Lipschitz} 
\|F(t,u)-F(t,v)\|_X \le L_{R,T}\,\|u-v\|_X \quad \hbox{for}\ 0\le t\le T, \ \|u\|_X,\|v\|_X\le R.
\end{equation}
Then we have the following local existence and uniqueness result

\begin{Prop}\label{pr:local-existence-uniqueness} 
Suppose $A$ is a closed, densely defined operator on $X$ that generates a strongly continuous semigroup $\{S(t)\}_{t\ge 0}$ satisfying 
\eqref{def:S-quasibounded} and $F : [0,\infty)\times X\to X$ satisfies the Lipschitz condition \eqref{local-Lipschitz}. 
Then for every $u_0\in X$ there exists a maximal interval of existence $T_{\rm max}>0$ such that \eqref{eq:Lipschitz-parabolic}
admits a unique mild solution $u\in C([0,T_{\rm max}),X)$. In addition, if $T_{\rm max}<\infty$ then 
$\|u(t)\|_X\to\infty$ as $t\to T_{\rm max}$.
Moreover, if $F : [0,\infty)\times X\to X$ is continuously differentiable, then for any $u_0\in D_X(A)$ we have that
$u\in C([0,T_{\rm max}),D_X(A))\cap C^1([0,T_{\rm max}),X)$ is a solution of \eqref{eq:Lipschitz-parabolic}.
\end{Prop}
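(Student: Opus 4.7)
The plan is to execute the standard Picard--Banach argument for the mild equation and then bootstrap to classical solutions when $F$ is $C^1$.

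For local existence, I would fix $u_0 \in X$, pick $R > \sup_{0 \le t \le 1}\|S(t) u_0\|_X$, and choose $\tau \in (0,1]$ small enough that
$$\Phi(v)(t) := S(t) u_0 + \int_0^t S(t-s)\, F(s, v(s))\,ds$$
maps the closed ball $B_R$ of radius $R$ in $C([0,\tau], X)$ into itself and is a strict contraction; both estimates follow from \eqref{def:S-quasibounded}, the continuity of $s \mapsto F(s,0)$, and the local Lipschitz bound \eqref{local-Lipschitz}. The Banach fixed-point theorem then produces a unique mild solution on $[0,\tau]$. Uniqueness on any common interval of existence follows by applying Gronwall's inequality after subtracting the mild equations for two solutions, and pasting unique solutions gives a maximal interval $[0, T_{\max})$. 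For the blow-up alternative, if $T_{\max} < \infty$ while $\|u(t_n)\|_X \le M$ along some sequence $t_n \nearrow T_{\max}$, the local existence time for initial data of norm $\le M$ on $[0, T_{\max}+1]$ is bounded below uniformly in terms of $M$, $T_{\max}$, and the Lipschitz data, so restarting at $u(t_n)$ for large $n$ extends $u$ past $T_{\max}$, a contradiction.

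The main obstacle is promoting the mild solution to a classical one when $F \in C^1$ and $u_0 \in D_X(A)$. I would introduce the linearized equation
$$w'(t) = A w(t) + \partial_t F(t, u(t)) + \partial_u F(t, u(t))\, w(t), \qquad w(0) = A u_0 + F(0, u_0),$$
whose right-hand side is affine and globally Lipschitz in $w$ on each compact subinterval of $[0, T_{\max})$; by Proposition \ref{pr:global-existence-uniqueness} this equation has a unique mild solution $w \in C([0, T_{\max}), X)$. The key technical step is to prove $h^{-1}\big(u(t+h) - u(t)\big) \to w(t)$ in $X$ for each $t \in (0, T_{\max})$, accomplished by subtracting the integral equations for these difference quotients and $w$, using $F \in C^1$ to write $F(s, u(s)+\eta) - F(s, u(s)) = \partial_u F(s, u(s))\eta + o(\|\eta\|_X)$ uniformly on compacts, and applying Gronwall to close the estimate. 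Once $u'(t) = w(t)$ is continuous on $(0, T_{\max})$, the identity $A u(t) = u'(t) - F(t, u(t))$ forces $u(t) \in D_X(A)$ and $A u \in C((0, T_{\max}), X)$, giving $u \in C^1((0, T_{\max}), D_X(A))$.

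To upgrade to $C^1([0, T_{\max}), D_X(A))$ when $X$ is reflexive or $F$ is autonomous, the task reduces to establishing right-differentiability at $t = 0$ with derivative $A u_0 + F(0, u_0)$ and continuity of $u'$ up to $0$. In the autonomous case this is direct: using $u_0 \in D_X(A)$ and $S(h) u_0 - u_0 = \int_0^h S(s) A u_0\, ds$, one writes
$$h^{-1}\big(u(h) - u_0\big) = h^{-1}\int_0^h S(s) A u_0\, ds + h^{-1}\int_0^h S(h-s) F(u(s))\, ds,$$
and each term converges in $X$ as $h \to 0^+$ to $Au_0$ and $F(u_0)$, respectively. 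In the reflexive case, the difference quotients are uniformly bounded near $t = 0$ by the earlier estimates, so weak compactness extracts a subsequential weak limit, and continuity of $w$ at $0$ together with a Gronwall--reflexivity argument identifies this limit as $w(0)$ and upgrades the convergence to strong in $X$, completing the proof.
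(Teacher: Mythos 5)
The paper does not actually supply a proof: it defers the first three assertions to Pazy (Chapter 6, Theorems 1.4 and 1.5) and the final assertion (the reflexive or autonomous case, giving differentiability up to $t=0$) to Zheng (Theorem 2.4.1, Corollary 2.5.4). Your sketch reconstructs exactly the argument that those references use: the Picard--Banach fixed point in $C([0,\tau],X)$ for local existence, Gronwall for uniqueness and pasting, the uniform lower bound on the existence time in terms of $\|u_0\|_X$, $T$, and the Lipschitz data for the blow-up alternative, and the linearized equation $w' = Aw + \partial_t F(t,u) + \partial_u F(t,u)\,w$ with $w(0) = Au_0 + F(0,u_0)$ combined with a difference-quotient/Gronwall comparison to show $u' = w$ on $(0,T_{\max})$ when $F\in C^1$ and $u_0\in D_X(A)$. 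So the two "routes" coincide; the difference is only that the paper cites while you reprove. Two small remarks. First, your treatment of the autonomous case is correct but notice it never actually uses autonomy: the identity $h^{-1}(u(h)-u_0)=h^{-1}\int_0^h S(s)Au_0\,ds + h^{-1}\int_0^h S(h-s)F(s,u(s))\,ds$ converges to $Au_0+F(0,u_0)$ for time-dependent $F$ as well, by continuity of $s\mapsto F(s,u(s))$ and strong continuity of $S$; you should either spell out what in the time-dependent general-Banach case obstructs this (which is why Zheng invokes reflexivity), or note that the hypothesis is in excess. Second, the reflexive half of your last paragraph is the one genuinely under-detailed step: "a Gronwall--reflexivity argument identifies this limit as $w(0)$ and upgrades the convergence to strong" papers over the actual mechanism, which is the reflexive-space characterization of $D(A)$ via boundedness of $h^{-1}(S(h)x-x)$ together with the fact that weak-$\ast$ limits of the difference quotients of the affine mild-solution formula are forced to equal $w(0)$; as written this would not survive scrutiny without filling in Zheng's lemma.
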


\noindent For a proof of this result, see the proof of Theorem 1.4 and Theorem 1.5 in Chapter 6 of \cite{Pazy}.
In the case when $A$ is a generator of an {\em analytic} semigroup and $F$ is independent of $t$ the mild solution in 
Proposition \ref{pr:local-existence-uniqueness} has additional regularity. More specifically, we have the following

\begin{Prop}\label{prop:analytic_semigroup_regularity} 
Suppose $A$ is a closed, densely defined operator on $X$ that generates an analytic semigroup $\{S(t)\}_{t\ge 0}$ satisfying 
\eqref{def:S-quasibounded} and $F : X\to X$ is independent of $t$ and satisfies the Lipschitz condition \eqref{local-Lipschitz} 
(with Lipschitz constant $L$ independent of $T$). Then, for every $u_0\in X$ the mild solution $u\in C([0,T_{\rm max}),X)$ from 
Proposition \ref{pr:local-existence-uniqueness} belongs to $C^1((0,T),X)$, $u(t)\in D_X(A)$ for any $t\in(0,T_{\rm max})$, 
and it satisfies \eqref{eq:Lipschitz-parabolic}.
\end{Prop}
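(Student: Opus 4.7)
My plan is to upgrade the mild solution $u\in C([0,T_{\max}),X)$ to a classical solution on $(0,T_{\max})$ by exploiting the two smoothing properties of the analytic semigroup: (i) for any $t>0$ the operator $S(t)$ maps $X$ into $D_X(A)$, and (ii) one has the quantitative estimate $\|AS(t)\|_{\mathcal L(X)}\le C/t$ for $t\in(0,T]$ (cf.\ \cite[\S 2.5]{Pazy}). The strategy is standard: first establish H\"older regularity of $u$ on compact subintervals of $(0,T_{\max})$, then transfer this regularity to $F\circ u$ using the Lipschitz condition \eqref{local-Lipschitz}, and finally invoke the classical regularity theory for the inhomogeneous Cauchy problem $v_t=Av+g(t)$ with H\"older continuous forcing $g$ to conclude that $u(t)\in D_X(A)$, $u\in C^1((0,T_{\max}),X)$, and $u_t=Au+F(u)$.

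For the H\"older step, I would fix $0<\epsilon<T<T_{\max}$ and write, for $t,t+h\in[\epsilon,T]$ with $h>0$,
\[
u(t+h)-u(t)=(S(h)-I)u(t)+\int_t^{t+h}S(t+h-s)F(u(s))\,ds.
\]
The second term is $O(h)$ since $F(u(\cdot))$ is bounded on $[0,T]$ and $\|S(\tau)\|_{\mathcal L(X)}$ is bounded on $[0,T]$. For the first term, I use that $u(t)=S(t/2)(S(t/2)u_0+\cdots)$ lies in $D_X(A)$ with the quantitative bound $\|Au(t)\|_X\le C_\epsilon$ on $[\epsilon,T]$ (obtained by differentiating the Duhamel formula and applying $\|AS(\tau)\|_{\mathcal L(X)}\le C/\tau$ to both the semigroup term and, by a convolution argument, to the integral term). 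Then
\[
(S(h)-I)u(t)=\int_0^h AS(\sigma)u(t)\,d\sigma=\int_0^hS(\sigma)Au(t)\,d\sigma
\]
is bounded by $C_\epsilon h$, giving in fact local Lipschitz regularity of $u$ on $[\epsilon,T]$; in particular $u$ is locally H\"older of every exponent $\theta\in(0,1)$ on $(0,T_{\max})$.

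From \eqref{local-Lipschitz}, applied with $R=\sup_{[\epsilon,T]}\|u\|_X<\infty$, the map $s\mapsto F(u(s))$ inherits the local H\"older (indeed Lipschitz) regularity on compact subintervals of $(0,T_{\max})$. I then invoke the standard result (see e.g.\ \cite[Corollary 4.3.3]{Pazy}): if $g\in C^\theta([\epsilon,T],X)$ and $A$ generates an analytic semigroup, then
\[
w(t):=\int_\epsilon^t S(t-s)g(s)\,ds
\]
satisfies $w\in C^1((\epsilon,T),X)$, $w(t)\in D_X(A)$, and $\dot w(t)=Aw(t)+g(t)$. Splitting the Duhamel representation
\[
u(t)=S(t-\epsilon)u(\epsilon)+\int_\epsilon^t S(t-s)F(u(s))\,ds,
\]
the first summand lies in $D_X(A)$ for $t>\epsilon$ and is $C^1$ in $t$ by analyticity of $S$, and the second is handled by the result just quoted applied with $g(s)=F(u(s))$. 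Combining, $u(t)\in D_X(A)$ and $\dot u(t)=Au(t)+F(u(t))$ on $(\epsilon,T)$; since $\epsilon,T$ are arbitrary, the assertion holds on all of $(0,T_{\max})$.

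The main obstacle I anticipate is the H\"older bootstrap near $t=0$, since $u_0\in X$ is not assumed to lie in $D_X(A)$, so $\|Au(t)\|_X$ will in general blow up as $t\to 0^+$; the argument must be confined to compact subintervals $[\epsilon,T]\subset(0,T_{\max})$ and the estimate on $\|Au(t)\|_X$ must be obtained with a constant that is allowed to depend on $\epsilon$ (via the singular estimate $\|AS(\tau)\|\le C/\tau$). A careful Gronwall-type estimate on the integral equation for $Au(t)$ on $[\epsilon,T]$, using the convolution $\int_\epsilon^t\tau^{-1}\|u(s)\|\,ds$, is what makes $\|Au(t)\|_X$ bounded on $[\epsilon,T]$ and hence drives the whole argument.
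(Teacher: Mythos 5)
Your overall strategy (H\"older regularity of $u$ on compact subintervals of $(0,T_{\rm max})$, transfer to $F\circ u$ via \eqref{local-Lipschitz}, then the maximal-regularity result for $\dot w=Aw+g$ with H\"older $g$) is a correct and standard way to re-derive the classical-solution property, whereas the paper simply cites \cite[Ch.\,6, Thm.\,3.1]{Pazy} together with a time-shift of the Duhamel formula starting from $t_0>0$ and invokes uniqueness. The two routes are compatible, but there is a genuine gap in your first step.

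You propose to establish Lipschitz regularity of $u$ on $[\epsilon,T]$ by first proving the bound $\|Au(t)\|_X\le C_\epsilon$, "obtained by differentiating the Duhamel formula and applying $\|AS(\tau)\|\le C/\tau$ \dots by a convolution argument to the integral term." That convolution is divergent: formally
\[
\Big\|A\int_0^t S(t-s)F(u(s))\,ds\Big\|
\le \int_0^t\|AS(t-s)\|\,\|F(u(s))\|\,ds
\le CM\int_0^t\frac{ds}{t-s}=\infty,
\]
and the usual device for repairing it, namely writing $AS(t-s)F(u(s))=AS(t-s)\big[F(u(s))-F(u(t))\big]+AS(t-s)F(u(t))$, requires that $s\mapsto F(u(s))$ already be H\"older continuous, which is exactly what you are trying to establish from the bound on $\|Au(t)\|$. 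Likewise, the factorization $u(t)=S(t/2)(\cdots)$ does not hold: $u(t)=S(t/2)u(t/2)+\int_{t/2}^tS(t-s)F(u(s))\,ds$, and the second summand is not in the range of $S(t/2)$. So this part of the argument is circular. (Note also that even after the bootstrap the mild solution with merely bounded forcing is only H\"older of every exponent $\theta<1$ on $(0,T_{\rm max})$, not Lipschitz, until one has proved $u\in C^1$.)

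The correct way to carry out the first step is to prove H\"older continuity of $u$ directly from the integral equation, without any reference to $Au(t)$. Write, for $\epsilon\le t<t+h\le T$,
\[
u(t+h)-u(t)=\big(S(h)-I\big)S(t)u_0+\int_0^t\big(S(h)-I\big)S(t-s)F(u(s))\,ds+\int_t^{t+h}S(t+h-s)F(u(s))\,ds,
\]
and use the two analytic-semigroup estimates $\|(S(h)-I)S(\sigma)\|\le C$ and $\|(S(h)-I)S(\sigma)\|=\big\|\int_0^h AS(\sigma+\tau)\,d\tau\big\|\le Ch/\sigma$. The first term is $O(h/\epsilon)$. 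For the middle term split at $s=t-h$: the part near $s=t$ contributes $O(h)$ and the part away from $s=t$ contributes $O\big(h\log(t/h)\big)$. The last term is $O(h)$. This gives $\|u(t+h)-u(t)\|\le C_\epsilon\,h^\theta$ for every $\theta\in(0,1)$, which is all that is needed; the Lipschitz bound on $F$ then gives $F(u(\cdot))\in C^\theta([\epsilon,T],X)$ and the remainder of your argument (the Duhamel split at $\epsilon$ plus \cite[Cor.\,4.3.3]{Pazy}) goes through. Once $u$ is known to be a classical solution on $(0,T_{\rm max})$, the bound $\|Au(t)\|_X\le C_\epsilon$ you wanted is a \emph{consequence}, not a starting point.
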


\noindent This proposition follows by combining the first part of Proposition \ref{pr:local-existence-uniqueness},
Theorem 3.1 in Chapter 6 in \cite{Pazy}, and the following integral representation of the solution $u(t)$ for $0<t_0<t<T$,
\[
u(t)=S(t-t_0)u(t_0)+\int_0^{t-t_0}S(t-t_0-s) F\big(u(s)\big)\,ds.
\]

\medskip

Now, we make \eqref{eq:Lipschitz-parabolic} a little more specific and consider
\begin{equation}\label{eq:nonlinear_heat}
\left\{
\begin{array}{l}
u_t=\Lap u + F(u), \quad\hbox{for $t>0$, \ $x\in\R^d$,} \\
u|_{t=0}=u_0,
\end{array}
\right.
\end{equation}
as an evolution equation in $X=H^{m,p}_\delta\equiv H^{m,p}_\delta(\R^d)$.
Moreover, we want  solutions of \eqref{eq:Lipschitz-parabolic} such that
\begin{equation}\label{u-strict}
u\in C([0,T_{\rm max}),H^{m+2,p}_\delta)\cap C^1([0,T_{\rm max}),H^{m,p}_\delta).
\end{equation} 
The following theorem follows from Theorem 2.5.6 in \cite{Zheng}.

\begin{Prop}\label{pr:localexistence-H} 
For $1<p<\infty$, $m\in\Z_{\ge 0}$, and $\delta\in\R$, suppose $F : H^{m+2,p}_\delta\to H^{m+2,p}_\delta$
is continuously differentiable and  $u_0\in H^{m+2,p}_\delta$. Then the mild solution $u\in C([0,T),H^{m+2,p}_\delta)$ solution 
given by Proposition \ref{pr:local-existence-uniqueness} belongs to \eqref{u-strict} and satisfies \eqref{eq:nonlinear_heat}. 
\end{Prop}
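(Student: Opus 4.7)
The approach is two-stage: first produce the mild solution in the strong space $H^{m+2,p}_\delta$ where the nonlinearity is defined, then upgrade its regularity to the class \eqref{u-strict} by reinterpreting the problem as a linear inhomogeneous equation in the larger base space $H^{m,p}_\delta$.

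For the first stage I would apply the existence portion of Proposition \ref{pr:local-existence-uniqueness} with $X=H^{m+2,p}_\delta$. By Theorem \ref{th:SonH} the Laplacian generates a strongly continuous semigroup on this space, and the polynomial bound \eqref{eq:S-estimate} supplies the quasibounded estimate \eqref{def:S-quasibounded}. The continuous differentiability of $F:[0,\infty)\times H^{m+2,p}_\delta\to H^{m+2,p}_\delta$ gives the local Lipschitz condition \eqref{local-Lipschitz} on bounded subsets. The proposition then produces the maximal mild solution $u\in C([0,T_{\max}),H^{m+2,p}_\delta)$.

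For the second stage, set $g(t):=F(t,u(t))$; the $C^1$-regularity of $F$ together with $u\in C([0,T_{\max}),H^{m+2,p}_\delta)$ implies $g\in C([0,T_{\max}),H^{m+2,p}_\delta)$. I would then view the problem in $X=H^{m,p}_\delta$, where by Corollary \ref{Co:apriori-estimates} the Laplacian has domain $D_X(\Delta)=H^{m+2,p}_\delta$. Both $u_0$ and every $g(t)$ lie in this domain, and $H^{m,p}_\delta$ is reflexive (via Lemma \ref{le:J_delta-H} and reflexivity of $L^p$ for $1<p<\infty$). Applying the last statement of Proposition \ref{pr:local-existence-uniqueness} to the linear inhomogeneous Cauchy problem
\[
u_t=\Delta u+g(t),\qquad u(0)=u_0,
\]
in $X$ then yields $u\in C^1([0,T_{\max}),H^{m,p}_\delta)$ with $u(t)\in H^{m+2,p}_\delta$ and $u_t=\Delta u+g(t)=\Delta u+F(t,u(t))$ in $H^{m,p}_\delta$, which is exactly \eqref{u-strict} together with \eqref{eq:nonlinear_heat}. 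Uniqueness of the mild solution ensures that this classical $u$ coincides with the one from the first stage.

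The main obstacle I foresee is the formal mismatch at the second stage: Proposition \ref{pr:local-existence-uniqueness} is phrased for a nonlinearity $F$ from $X$ into itself, while here $F$ is only defined on the strict subspace $H^{m+2,p}_\delta$ of $H^{m,p}_\delta$. The workaround is to substitute the prescribed inhomogeneity $g$ for $F$, reducing matters to a linear problem whose hypotheses are automatically verified on $X=H^{m,p}_\delta$; the reflexivity of $X$ is then what provides continuity of $u_t$ up to the initial time $t=0$.
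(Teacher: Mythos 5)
Your two-stage bootstrap is a more explicit argument than the paper's, which simply cites the last statement of Proposition \ref{pr:local-existence-uniqueness} together with reflexivity of $H^{m,p}_\delta$. The paper's own citation is in fact a little loose: the ``moreover'' part of Proposition \ref{pr:local-existence-uniqueness} requires $F$ to be continuously differentiable from $[0,\infty)\times X$ into $X$ with $X=H^{m,p}_\delta$, whereas the hypothesis of Proposition \ref{pr:localexistence-H} only gives $F$ continuously differentiable on the smaller space $H^{m+2,p}_\delta$. You correctly spotted this mismatch, and your reduction to the linear inhomogeneous problem $u_t=\Delta u+g(t)$ in $X=H^{m,p}_\delta$ with $g(t)=F(t,u(t))$ is exactly the right device to sidestep it; you also correctly identified that the essential structural facts are $u_0\in D_X(\Delta)=H^{m+2,p}_\delta$ and $g\in C\big([0,T_{\max}),H^{m+2,p}_\delta\big)=C\big([0,T_{\max}),D_X(\Delta)\big)$.

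There is, however, a small logical gap in the justification you give for the second stage. When you apply the last statement of Proposition \ref{pr:local-existence-uniqueness} with $F(t,w):=g(t)$, that proposition requires the substituted $F$ to be continuously differentiable from $[0,\infty)\times X$ into $X$, i.e.\ it requires $g\in C^1\big([0,T_{\max}),X\big)$. But you only establish $g\in C\big([0,T_{\max}),D_X(\Delta)\big)$: the composition $t\mapsto F(t,u(t))$ of a $C^1$ map with a merely continuous curve $u$ is continuous, not $C^1$. So the hypotheses of Proposition \ref{pr:local-existence-uniqueness} are \emph{not} ``automatically verified'' for the linear problem, and the appeal to reflexivity through that proposition does not close the argument. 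What does close it is the standard regularity theorem for the linear inhomogeneous Cauchy problem with data in the domain of the generator (see e.g.\ \cite[Chapter 4, Corollary 2.6]{Pazy}): if $u_0\in D(A)$ and $g\in C\big([0,T],D(A)\big)$, then the mild solution $u(t)=S(t)u_0+\int_0^t S(t-s)g(s)\,ds$ satisfies $u(t)\in D(A)$, $Au\in C([0,T],X)$, and $u'(t)=Au(t)+g(t)$ with $u'\in C([0,T],X)$ including $t=0$; here no reflexivity is needed since the convolution term $v$ satisfies $\Delta v(t)=\int_0^t S(t-s)\Delta g(s)\,ds\to 0$ as $t\to 0^+$ and $v'(t)=g(t)+\Delta v(t)$ is continuous down to $t=0$. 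Replacing your appeal to Proposition \ref{pr:local-existence-uniqueness} in stage two with this linear result makes your argument complete and, arguably, cleaner than the paper's own.
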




\end{document}